\tikzset{
	dot/.style={circle,fill=black,draw=black, solid,inner sep=0pt,minimum size=0.5mm},
	yy/.style={circle,fill=gray!20,draw=black,inner sep=0pt,minimum size=0.8mm},
	>=stealth,
	}
\def\DeclareSymbol#1#2#3{\expandafter\gdef\csname MH@symb@#1\endcsname{\tikz[baseline=#2,scale=0.15]{#3}}%
\expandafter\gdef\csname MH@symb@#1s\endcsname{\scalebox{0.6}{\tikz[baseline=#2,scale=0.15]{#3}}}}
\def\<#1>{\csname MH@symb@#1\endcsname}
\let\emptyset \undefined
\newcommand{\un}{u_{\mathrm{det}}}
\theoremstyle{plain}
\newtheorem{theorem}{Theorem}[section]
\newtheorem{lemma}[theorem]{Lemma}
\newtheorem{definition}[theorem]{Definition}
\theoremstyle{remark}
\newtheorem{remark}[theorem]{Remark}
\numberwithin{equation}{section}
\newcommand{\T}{\mathbf{T}}
\newcommand{\Cc}{\mathcal{C}}
\newcommand{\Hh}{\mathcal{H}}
\newcommand{\Oo}{\mathcal{O}}
\newcommand{\FF}{\mathbf{\Psi}}
\newcommand{\FFF}{\Psi}
\newcommand{\EE}{\mathbf{E}}
\newcommand{\sz}{\mathbf{s}}
\def\MM{\mathscr{M}}
\def\hom{\mathrm{hom}}
\let\I\CI
\let\J\CJ
\def\MM{\mathscr{M}}
\def\MMm{\mathscr{M}_{\min}}
\def\II{\mathscr{I}}
\def\TT{\mathscr{T}}
\def\GGamma#1{\Gamma_{\!#1}}
\def\bGGamma#1{\bar \Gamma_{\!#1}}
\def\bbar#1{\bar{\bar #1}}
\def\PPi{\mathbf{\Pi}}
\def\${|\!|\!|}
\def\min{{\tikz[baseline=-1,line cap=round] \draw(0,0)--(0.15,0);}}
\DeclareMathOperator{\dist}{dist}
\begin{document}

\title[Large deviations]
{Large deviations for white-noise driven, nonlinear stochastic PDEs in two and three dimensions}

\author{Martin Hairer}
\address{Martin Hairer, University of Warwick
}
\email{M.Hairer@Warwick.ac.uk}

\author{Hendrik Weber}
\address{Hendrik Weber, University of Warwick
}
\email{hendrik.weber@warwick.ac.uk}





\begin{abstract}
We study the stochastic Allen-Cahn equation driven by a noise term with intensity $\sqrt{\eps}$ and correlation length $\delta$ in two and three spatial dimensions. We study diagonal limits $\delta, \eps \to 0$ and describe fully the large deviation behaviour depending on the relationship between $\delta$ and $\eps$. 

The recently developed theory of regularity structures allows to fully analyse the behaviour of solutions for vanishing correlation length $\delta$ and fixed noise intensity $\eps$.  One key fact is that in order to get non-trivial limits as $\delta \to 0$, it is necessary to introduce diverging counterterms. The theory of regularity structures allows to rigorously analyse this renormalisation procedure for a number of interesting equations.

Our main result is a large deviation principle for these renormalised solutions. One interesting feature of this result is that the diverging renormalisation constants disappear at the level of the large deviations rate function. We apply this result to derive a sharp condition on $\delta, \eps$ that guarantees a large deviation principle for  diagonal schemes $\eps, \delta \to 0$ for the equation without renormalisation.\\

\noindent\textsc{R\'esum\'e.} Nous \'etudions l'\'equation d'Allen-Cahn stochastique conduite par un bruit d'intensit\'e $\sqrt{\eps}$ et de longueur de corr\'elation $\delta$ en dimensions spatiales deux et trois. Nous consid\'erons la limite $\delta,\eps \to 0$ et nous d\'ecrivons compl\`etement le comportement des grandes d\'eviations associ\'ees, suivant les relations entre $\delta$ et $\eps$.

La th\'eorie des structures de r\'egularit\'e r\'ecemment d\'evelopp\'ee permet d'analyser le comportement des solutions \`a intensit\'e de bruit $\eps$ fix\'ee dans la limite $\delta \to 0$. Un fait crucial est que, afin d'obtenir des limites non-triviales dans cette limite, il est n\'ecessaire d'introduire des contretermes divergents. La th\'eorie des structures de r\'egularit\'e permet d'analyser rigoureusement de telles proc\'edures de renormalisation pour un nombre d'\'equations int\'eressantes.

Notre r\'esultat principal est un principe de grandes d\'eviations pour ces \'equations renormalis\'ees. Il est alors int\'eressant de noter que les constantes de renormalisation divergentes disparaissent au niveau de la fonction de taux. Une cons\'equence de ce r\'esultat est une condition optimale sur le comportement relatif de $\delta$ et $\eps$ qui garantit l'existence d'un principe de grandes d\'eviations \'egalement pour l'\'equation non-renormalis\'ee dans certains r\'egimes.
\end{abstract}


\date\today

\maketitle

\section{Introduction}
\label{s:Intro}

The purpose of this article is to provide large deviation results for a class of non-linear stochastic 
PDEs which are driven by space-time white noise, in   $2$ and $3$ spatial dimensions. 
We are going to study equations of the type
\begin{equ}[e:AC]
\d_t u = \Delta u + C u - u^3 + \sqrt{\eps} \xi_\delta\;,
\end{equ}
where $\xi_\delta$ is some random driving noise (we will be more specific very soon) and  $C\in \R$.
In order to avoid complications coming from the effect of boundary conditions, we will always
assume that the spatial variable takes values in the $d$-dimensional torus $\T^d$ and  
that the initial datum $u_0$ is fixed.

Equations of this type are popular as phenomenological descriptions in  various situations, e.g. for phase separation (see e.g. \cite{HH}).
From a physical point of view, the interesting regime is that where the noise is weak (i.e.\ the typical strength of the
noise, say when tested against smooth test functions, is of order $\sqrt \eps \ll 1$)
and almost white (i.e.\ correlations of $\xi$ decay on a lengthscale $\delta \ll 1$). 
In this case, we denote the noise by $\sqrt \eps \xi_\delta$.

In one spatial dimension the solution theory for equation \eqref{e:AC} is well-understood, even in the case of vanishing correlation length $\delta =0$, i.e.\ in the case where the noise term $\xi$ is a space-time white noise.   
In this situation the limit of vanishing noise strength was described in detail by \cite{JL} on the level of large deviation estimates.

Recently, there have been several works 
dedicated to studying \eqref{e:AC} driven by $\sqrt \eps \xi_\delta$ for $\eps \ll \delta \ll 1$ in arbitrary spatial dimension. For fixed $\delta > 0$, the law of \eref{e:AC} 
satisfies a large deviation principle as $\eps \to 0$
with rate $\eps$ and some rate function $\II_\delta$. Formally, as $\delta \to 0$,
the sequence of rate functions $\II_\delta$ converges to a limiting
``rate function'' $\II$ given by
\begin{equ}
\II(u) =  \frac12 \int_0^T \!\!\int_{\T^d} \big( \partial_t u -  \Delta u - C u + u^3 \big)^2 \, dx \, dt   \;.
\end{equ}
(With the understanding that $\II(u)$ is infinite if the distribution $\partial_t u -  \Delta u - C u + u^3$ is not
represented by a square integrable function or if $u$ does not satisfy the initial condition.)
Minimisers of the rate function $\II$ subject to certain initial and terminal conditions were investigated in \cite{ERV,KohnOtto}.  In  \cite{SandraLD} Cerrai and Freidlin showed that the convergence $\II_\delta \to \II$  holds in the sense of 
$\Gamma$-convergence (with respect to a suitable topology) for arbitrary dimension $d$. 
They used this result to conclude $\Gamma$-convergence of the associated  \emph{quasi potentials}. 

These results naturally suggest that $\II$ should be the rate function for a problem
which does not involve $\delta$ anymore. It would be very natural to interpret $\II$ as the rate function for the solutions to
\eref{e:AC} with noise $\sqrt \eps \xi$ where $\xi$ denotes space-time white noise. (In one spatial dimensions $d=1$ it is shown in \cite{JL} that this is indeed true). 
The problem that immediately presents itself is that while the result of \cite{SandraLD} 
holds in any dimension, the equation \eref{e:AC} (with fixed $C$) 
driven by space-time white noise 
is ill-posed in any dimension $d \ge 2$. As a matter of fact, if we denote by $u^{(\eps)}_{\delta}$ the  solution to \eref{e:AC} driven by $\sqrt \eps \xi_\delta$ and by $\un$ the deterministic solution to the equation with $\xi = 0$, then one has
\begin{equ}
\lim_{\delta \to 0} \lim_{\eps \to 0} u^{(\eps)}_{\delta}= \un\;.
\end{equ}
On the other hand, it was shown in \cite{Marc} that, already in dimension $2$, one has
$\lim_{\delta \to 0}u^{(\eps)}_{\delta}= 0$ in a space of distributions for any fixed value of $\eps$! 

There is, however, a way to obtain a non-trivial stochastic limit in a scheme with $\delta \to 0$, but the equation \eqref{e:AC} has to be modified: Consider the solution  $\hat{u}_{\delta}^{(\eps)}$ to
\begin{equ}[e:ACapprox]
\d_t u= \Delta u + \bigl(C + 3\eps C_\delta^{(1)} - 9\eps^2 C_\delta^{(2)}\bigr) u- u^3 + \sqrt \eps\xi_\delta\;,
\end{equ}
where the spatial variable takes values in the $d$-dimensional torus with $d \in \{2,3\}$. 
In two spatial dimensions it was shown in \cite{dPD} that for every fixed $\eps$ the solutions to \eqref{e:ACapprox} have a non-trivial limit as $\delta \to 0$, provided that the $\delta$-depend constants are suitably chosen. Recently, in \cite{Regularity}, the first named author developed a theory of regularity structures allowing to construct a non-trivial limit also in three spatial dimensions. We denote these limits by $\hat{u}^{(\eps)}$.

\begin{remark}
In dimension $d=2$, one can actually take $C_\delta^{(2)} = 0$ and 
$C_\delta^{(1)} = {1\over 4\pi} |\log \delta|$.
In dimension $d=3$ on the other hand, one has $C_\delta^{(1)} \propto \delta^{-1}$ with a proportionality
constant depending on the details of the regularisation of $\xi_\delta$, see \eref{e:CWick}.
Furthermore, it is $C_\delta^{(2)}$ which should be taken proportional to $|\log \delta|$ in this case. 
\end{remark}

The purpose of this article is to investigate the large deviations of the solutions obtained from the scheme \eqref{e:ACapprox} as  $\eps \to 0$.
Our main result, Theorem~\ref{thm:main1}, states that for every $T>0$ and every function $\eps \mapsto \delta(\eps) \geq 0$ with 
$\lim_{\eps \to 0} \delta(\eps) = 0$, the laws of the sequence $\hat{u}_{\delta(\eps)}^{(\eps)}$ satisfy
a large deviation principle in the space $\CC([0,T], \CC^{1-d/2-\kappa})$ 
(for arbitrarily small $\kappa >0$) with rate $\eps$ 
and rate function $\II$ (see below in Section~\ref{s:RegStr}  for a discussion of spaces with negative regularity). This result includes the case $\delta =0$: The processes $\hat{u}^{(\eps)}$ do not depend on a correlation length $\delta$ and satisfy a large deviation principle with rate function $\II$. A remarkable feature of this result is that the \emph{diverging renormalisation constants} $C_\delta^{(1)}$ and $C_\delta^{(2)}$, present in \eqref{e:ACapprox}, disappear on the level of the large deviations, independently of the relationship between $\delta$ and $\eps$. 

As an immediate consequence of our method, in Theorem~\ref{thm:main2}, we make more precise the condition on $\delta$ and $\eps$ under which the solutions $u^{(\eps)}_{\delta}$ of the equation \eqref{e:AC} 
without renormalisation constants satisfy a large deviation principle. Assume that the function $\delta(\eps)$ satisfies
\begin{equ}
\lim_{\eps \to 0}  \eps \delta(\eps)^{-1} = \lambda^2 \in [0,\infty],
\end{equ}
for $d=3$, or $\lim_{\eps \to 0}  \eps \log\big(\delta(\eps)^{-1}\big) = \lambda^2$ for $d=2$. If $\lambda=0$  the solutions $u^{(\eps)}_{\delta}$  also satisfy a large deviation principle with rate function $\II$. If $\lambda \in (0,\infty)$, 
we still obtain a large deviation principle, but this time with a \emph{modified} rate function that depends on $\lambda$. (And on
the regularisation of $\xi_\delta$ in dimension $3$.)

The threshold $\eps \ll  \big( \log(\delta^{-1}) \big)^{-1}$ for $d =2$ and $\eps \ll \delta $ for $d =3$  is not surprising. Indeed, 
it is quite straightforward  to check without reference to the theory of regularity structures that if this condition is satisfied, 
then  as $\eps \to 0$ the solutions $u^{(\eps)}_{\delta(\eps)}$ converge in probability to $\un$, with respect to the topology of uniform 
convergence (see \cite[Section 4]{Marc} for the argument if $d=2$). Indeed, the same statement holds for arbitrary spatial dimension 
$d \geq 4$ provided $\eps \ll \delta^{d-2}$. It is however important to note that our method to prove the large deviation principle 
relies strongly on the understanding of the renormalised equation \emph{even for the schemes without renormalisation}. 
In particular, it is not clear whether a large deviations principle holds in higher dimensions, even in the regime $\eps \ll \delta^{d-2}$.

We would also like to emphasise that the methods developed in this article are not restricted to the 
specific equation \eqref{e:AC}. 
The same arguments would yield similar results for all equations that can be treated with the methods 
developed in \cite{Regularity}. 
In particular, in $d=2$ the nonlinear term $u^3$ in \eqref{e:AC} could be replaced by an arbitrary 
polynomial of odd degree with negative leading-order coefficient, and one has
a large deviations principle for the KPZ equation driven by small noise. 
In the same way we could obtain a large deviation principle for the 
two dimensional Navier-Stokes equations driven by space-time white noise 
(see \cite{daPratoDebusscheNavier} for the solution theory at $\eps>0$ fixed, see \cite{bouchet2014langevin} for an analysis of the corresponding rate function). 

Finally, we point out that partial results in two spatial dimension were already obtained by Jona-Lasinio and Mitter in \cite{JonaLasino}.  In \cite{Aida2,Aida1} Aida studied a related model in one spatial dimension, but with scaling properties akin to the two-dimensional
case. We also want to mention that the theory of regularity structures grew out of an attempt to generalise the theory of \emph{rough paths} to higher dimensions. Large derivation results for rough paths were obtained by \cite{LedouxRP, MilletSanz,FrizVictoir}.

\subsection{Structure of the article}
In Section~\ref{s:RegStr} we present a very short summary of some notions of the theory of regularity structures. 
In Section~\ref{s:WienerLDP} we discuss random variables in a fixed Wiener chaos taking values in a separable Banach 
space and their large deviations. Section~\ref{s:AC-LDP} contains the statements and proofs of our main results.

\subsection*{Acknowledgements}

{\small
We are grateful to the anonymous referee for carefully reading the original manuscript and 
making various suggestions that improved the exposition.
MH was supported by the Leverhulme trust through a leadership award and
the Royal Society through a Wolfson research award.
MH would also like to thank the Institute for Advanced Study 
for its warm hospitality and the `The Fund for Math' for funding his stay there, over the course
of which this work was completed.
HW was supported by an EPSRC First Grant. 
}

\section{Regularity structures}
\label{s:RegStr}

In order to prove the type of convergence result mentioned in the 
introduction, we make use of the theory of regularity structures 
developed in \cite{Regularity}. A complete self-contained exposition of
the theory is of course beyond the scope of this article, so we will
content ourselves with a short summary of the theory's main concepts 
and results, when specialised to the specific example of the stochastic
Allen-Cahn equation \eref{e:AC}.
For a concise exposition of the general theory, see also the lecture 
notes \cite{RegularityLN}.

The main ingredient of the theory is that of a \textit{regularity structure}.
In our case, this consists of a graded vector space $\CT = \bigoplus_{\alpha \in A} \CT_\alpha$
where $A$ denotes a set of real-valued indices (called homogeneities) that is 
locally finite and bounded from below. Each of the spaces $\CT_\alpha$ will be finite-dimensional
and come with a distinguished canonical basis. The space $\CT$ also comes
endowed with a group $\CG$ of continuous linear transformations of $\CT$ with the property that,
for every $\Gamma \in \CG$, every $\alpha \in A$, and every $\tau \in \CT_\alpha$ one has
\begin{equ}[e:basicRel]
\Gamma \tau - \tau \in \CT_{<\alpha} \eqdef \bigoplus_{\beta < \alpha} \CT_\beta\;.
\end{equ}

The canonical example to keep in mind is the space $\bar \CT = \bigoplus_{n \in \N} \bar \CT_n$ 
of abstract polynomials in
finitely many indeterminates, with $A = \N$ and $\bar \CT_n$ denoting the space of monomials
that are homogeneous of degree $n$. In this case, a natural group of transformations $\CG$ acting
on $\bar \CT$ is given by the group of translations, which does indeed satisfy \eref{e:basicRel}.

\subsection{Specific regularity structure}
\label{sec:regStruc}

The regularity structure that is relevant for the analysis of \eref{e:AC} is built in the
following way. First, we start with the regularity structure $\bar \CT$ given by all polynomials in
$d+1$ indeterminates, let us call them $X_0,\ldots,X_d$, which denote the time
and space directions respectively. We do however endow these with the parabolic space-time 
scaling instead of the more usual Euclidean scaling 
so that each factor of the ``time'' variable $X_0$ increases the homogeneity by $2$.
More precisely, one has $\one \in \bar \CT_0$,  $X_0 \in \bar \CT_2$, $X_i \in \bar \CT_1$
for $i \in \{1,\ldots,d\}$, etc.

We then introduce two additional symbols, $\Xi$ and $\CI$, which will be interpreted 
as an abstract representation of the driving noise $\xi$ and of the operation of convolution
with the heat kernel respectively. Fixing some (sufficiently small in the sequel) exponent $\kappa > 0$,
we then \textit{postulate} that $\Xi$ has homogeneity $|\Xi| = -{d+2\over 2} - \kappa$
and, if $\tau$ is some formal expression with homogeneity $|\tau| = \alpha$, then
$\CI(\tau)$ is a new formal expression with homogeneity $|\CI(\tau)| = \alpha + 2$.
We also \textit{postulate} that $\CI(X^k) = 0$ for every multi-index $k$, which will make sense in
view of \eref{e:killPoly} below.
(Here, for $k = (k_0,\ldots,k_d)$, we have used the shorthand $X^k = X_0^{k_0}\cdots X_d^{k_d}$.)
Furthermore, if $\tau, \bar \tau$ are formal expressions with respective homogeneities
$\alpha, \bar \alpha$, then $\tau \bar \tau = \bar \tau \tau$ is postulated to be a new formal expression with
homogeneity $\alpha + \bar \alpha$. 

A few examples of formal expression with their respective homogeneities
that can in principle be built in this way are given by
\begin{equ}[e:examples]
|\CI(\Xi)^2| = 2-d-2\kappa\;,\quad 
|\CI(\CI(\Xi)^3)| = 5 -{3d\over 2} - 3\kappa\;,\quad 
|\Xi\CI(\Xi)| = -d-2\kappa\;. 
\end{equ}
In order to define our regularity structure $\CT$, we do not keep all of these formal expressions,
but only those that are actually useful for the abstract reformulation of \eref{e:AC}.
More precisely, we consider a collection $\CU$ of formal expressions which is the
smallest collection containing $\one$, $X$, and $\CI(\Xi)$, and such that  
\begin{equ}
\tau_1,\tau_2,\tau_3 \in \CU \quad\Rightarrow\quad \CI(\tau_1\tau_2\tau_3) \in \CU\;.
\end{equ}
Here  and below we use $X$ to denote the collection of all $X_i$ for $i\in \{ 0,1, \ldots,d \}$.
We then set 
\begin{equ}
\CW = \{\Xi\} \cup \{\tau_1\tau_2\tau_3\,:\, \tau_i \in \CU\}\;,
\end{equ}
and we define $\CT$ as the set of all linear combinations of elements in $\CW$. 
(Note that since $\one \in \CU$, one does in particular have $\CU \subset \CW$.)
Naturally, $\CT_\alpha$ consists of those linear combinations that only involve
elements in $\CW$ that are of homogeneity $\alpha$. Furthermore, $\CW$ is the
previously announced set of canonical basis elements of $\CT$.
In particular, $\CT$ contains the first
two formal expressions of \eref{e:examples}, but not the last one. It follows furthermore
from \cite[Lem.~8.10]{Regularity} that, for every $\alpha \in \R$, $\CW$ contains only
finitely many elements of homogeneity less than $\alpha$, so that each $\CT_\alpha$
is finite-dimensional.

In order to simplify expressions later, we will use the following shorthand graphical 
notation for elements of $\CW$. For $\Xi$, we simply draw a dot.
The integration map is then represented by a downfacing line and the multiplication of 
symbols is obtained by joining them at the root. For example, we have
\begin{equ}[e:trees1]
\CI(\Xi) = \<1>\;,\quad
\CI(\Xi)^3 = \<3>\;,\quad
\CI(\Xi)\CI(\CI(\Xi)^3) = \<31>\;.
\end{equ}
Symbols containing factors of $X$ have no particular graphical representation.

\subsection{Structure group}

Let us now describe the structure group $\CG$ associated to the space $\CT$. For this, we 
first introduce additional symbols $\CJ_k(\tau)$ and denote by
 $\CT_+$, the free commutative algebra generated by 
\begin{equ}[e:genT+]
\CW_+ \eqdef \bigl\{X\bigr\}\cup \bigl\{\CJ_k(\tau)\,:\, \tau \in \CW\;, \; |\tau| + 2 > |k|\}\;,
\end{equ}
where $k$ is an arbitrary $(d+1)$-dimensional multi-index and $|k|$ denotes its
``parabolic length'', i.e.
\begin{equ}
|k| = 2k_0 + k_1 +\ldots + k_d\;.
\end{equ}
In other words, $\CT_+$ consists of all linear combinations of products of formal expressions 
in $\CW_+$. We will view $\CJ_k$ as a map from $\CT$ into $\CT_+$
by postulating that it acts linearly on $\CT$ and that $\CJ_k(\tau) = 0$ for those formal expressions
$\tau$ for which $|\tau| + 2 \le |k|$.

With this definition at hand, we construct a linear map $\Delta \colon \CT \to \CT\otimes \CT_+$ in a recursive way. 
In order to streamline notations, we shall write 
$\tau^{(1)}\otimes\tau^{(2)}$ as a shorthand for $\Delta\tau$.
(This is an abuse of notation, following Sweedler, since in general
$\Delta\tau$ is a linear combination of such terms.) We then define $\Delta$ via the identities
\begin{equ}
\Delta\1=\1\otimes\1\;,\qquad
\Delta\Xi= \Xi\otimes\1\;,\qquad
\Delta X_i= X_i\otimes\1+\1\otimes X_i\;,
\end{equ}
and then recursively by the following relations:
\begin{equs}
\Delta \tau\overline{\tau}&= \tau^{(1)}\overline{\tau}^{(1)}\otimes \tau^{(2)}\overline{\tau}^{(2)}\;,\\
\Delta\I(\tau)&=\I(\tau^{(1)})\otimes \tau^{(2)}+\sum_{\ell,m}\frac{X^\ell}{\ell!}\otimes\frac{X^m}{m!}
\J_{\ell+m}(\tau)\;.
\end{equs}
For any linear functional $f \colon \CT_+ \to \R$, we can now define in a natural way
a map $\Gamma_{\!f} \colon \CT \to \CT$ by
\begin{equ}[e:Gammaf]
\Gamma_{\!f} \tau = (I \otimes f)\Delta \tau\;,
\end{equ}
where $I$ denotes the identity map.
Let now $\CG_+$ denote the set of all such linear functionals  which are multiplicative in the sense that 
$f(\tau \bar \tau) = f(\tau)f(\bar \tau)$ for any two elements $\tau, \bar \tau \in \CT_+$. With this definition
at hand, we set
\begin{equ}
\CG = \{\Gamma_{\! f}\,:\, f \in \CG_+\}\;.
\end{equ}
It is not difficult to see that these operators satisfy the property \eref{e:basicRel}, but it is
a non-trivial fact that the set $\CG$ of these linear operators does indeed form a group
under composition, see \cite[Sec.~8.1]{Regularity}.

\begin{remark}\label{rem:cutoff}
As a matter of fact, we will never need to consider the full space $\CT$ as defined above,
but it will be sufficient to consider the subspace generated by all elements of homogeneity less
than some  large enough number $\zeta$. In practice, it actually turns out to be 
sufficient to choose $\zeta = 2$.
\end{remark}

\begin{remark}
The construction we just explained depends on the dimension $d$, so we should really use the notations
$\CT^{(d)}$, $\CT_+^{(d)}$, etc. In order to keep the notations simple, we refrain from doing so. 
Our statements will always either hold for both $d=2,3$, or if they do not, then 
the value of $d$ for which they hold will be made clear. 
\end{remark}

\subsection{Models}\label{ss:models}

Now that we have fixed our algebraic regularity structure $(\CT,\CG)$, we introduce a family of analytical 
objects associated to it that will play the role of Taylor polynomials in our theory in order to
allow us to describe solutions to \eref{e:AC} locally, up to arbitrarily high order, despite the fact
that they are not smooth in the conventional sense.

From now on, we also fix a value $\zeta \ge 2$ as in Remark~\ref{rem:cutoff} and we
set $\CT = \CT_{<\zeta}$. This has the advantage that
$\CT$ is finite-dimensional so we do not need to worry about topologies.
We first fix a kernel $K \colon \R^{d+1} \to \R$ with the following properties:
\begin{enumerate}
\item The kernel $K$ is compactly supported in $\{|x|^2 + |t| \le 1\}$.
\item One has $K(t,x) = 0$ for $t \le 0$ and $K(t,-x) = K(t,x)$.
\item For $(t,x)$ with $|x|^2 + t < 1/2$ and $t > 0$, one has
\begin{equ}
K(t,x) = {1\over |4\pi t|^{d/2}} e^{-{|x|^2 \over 4t}}\;,
\end{equ}
and $K$ is smooth on $\{|x|^2 + |t| \ge 1/4\}$.
\item For every polynomial $P \colon \R^{d+1} \to \R$ of parabolic degree less than $\zeta$, one has
\begin{equ}[e:killPoly]
\int_{\R^{d+1}} K(t,x) P(t,x)\,dx\,dt = 0\;;
\end{equ}
\end{enumerate}
in other words, $K$ has essentially all the properties of the heat kernel, except that it is furthermore
compactly supported and satisfies \eref{e:killPoly}. The constants $1/2$ and $1/4$ appearing in
the third point are of course completely arbitrary as long as they are strictly between $0$ and $1$.
The existence of a kernel $K$ satisfying these properties is very easy to show.

We now denote by $\CS'$ the space of distributions on $\R^{d+1}$ and by $\CL(\CT,\CS')$ the space
of (necessarily continuous) linear maps from $\CT$ to $\CS'$. Furthermore, given
a continuous test function $\phi\colon \R^{d+1} \to \R$ and a point $z = (t,x) \in \R^{d+1}$, we set
\begin{equ}[e:rescale]
\phi_z^\lambda(\bar z) = \lambda^{-(d+2)} \phi\bigl(\lambda^{-2}(\bar t - t), \lambda^{-1}(\bar x - x)\bigr)\;,
\end{equ}
where we also used the shorthand $\bar z = (\bar t, \bar x)$. Finally, we write $\CB$ for the set
of functions $\phi \colon \R^{d+1} \to \R$ that are smooth, compactly supported in the ball of radius one,
and with their values and derivatives up to order $3$ bounded by $1$.

Given a kernel $K$ as above, we then introduce a set $\MM$ of \textit{admissible models} which
will play the role of Taylor polynomials for our solution theory.
An admissible model consists of a pair $(\Pi,F)$ of functions
\begin{equs}[2]
\Pi \colon \R^{d+1} &\to \CL(\CT,\CS') \quad & \quad F \colon \R^{d+1}  &\to \CG \\
z &\mapsto \Pi_z & z &\mapsto F_z 
\end{equs}
with the following properties. 
First, writing $\gamma_{z\bar z} \in \CG_+$ for the element
such that $F_z^{-1} \circ F_{\bar z}  = \GGamma{\gamma_{z\bar z}}$, they satisfy the analytical bounds
\begin{equ}[e:bounds]
\bigl|\bigl(\Pi_z \tau\bigr)(\phi_z^\lambda)\bigr| \lesssim \lambda^{|\tau|}\;,\qquad
\bigl|\gamma_{z\bar z}(\bar \tau)\bigr| \lesssim |z-\bar z|^{|\bar \tau|}\;,
\end{equ}
uniformly over $\phi \in \CB$, $\lambda \in (0,1]$, $\tau \in \CW$, and $\bar \tau \in \CW_+$. 
Here, with the same shorthand as before,
we set
\begin{equ}
|z-\bar z| = |x-\bar x| + \sqrt{|t-\bar t|}\;.
\end{equ}
The proportionality constants implicit in the notation $\lesssim$ of \eref{e:bounds} are assumed to be bounded
uniformly for $z$ and $\bar z$ taking values in any compact set. We also assume
that one has the algebraic identity
\begin{equ}[e:algebraic]
\Pi_{z} F_z^{-1} = \Pi_{\bar z} F_{\bar z}^{-1}\;,
\end{equ}
valid for every $z, \bar z$ in $\R^{d+1}$. Finally, and this is why our models are called
\textit{admissible}, we assume that $\bigl(\Pi_z \one\bigr)(\bar z) = 1$, that for every multi-index $k$
and every $\tau \in \CW$ with $X^k \tau \in \CT$,
\minilab{e:admissible}
\begin{equ}\label{e:admissible1}
\bigl(\Pi_z X^k \tau\bigr)(\cdot) = (\cdot- z)^k\Pi_z \tau \;,\qquad f_z(X^k) = (-z)^k\;,
\end{equ}
and that, for every $\tau \in \CW$ with $\CI\tau \in \CT$
(recall that we did truncate $\CT$ so this is not true for \textit{all} $\tau$), one has the identities
\minilab{e:admissible}
\begin{equs} 
f_z(\CJ_k \tau) &=  -\int_{\R^{d+1}} D^k K(\bar z - z)\bigl(\Pi_{z} \tau\bigr)(d\bar z) \;, \qquad |k| < |\tau|+2\;,\label{e:admissible2}\\
\bigl(\Pi_z \CI \tau\bigr)(\bar z) &=  \int_{\R^{d+1}} K(\bar z - \bbar z)\bigl(\Pi_{z} \tau\bigr)(d\bbar z) + \sum_{k} {(\bar z - z)^k \over k!} f_z(\CJ_k \tau) \;.\qquad \label{e:admissible3}
\end{equs}
Here, we wrote similarly to above $f_z \in \CG_+$ for the element such that $F_z = \GGamma{f_z}$.
Recall that we have set $\CJ_k \tau = 0$ if $|k| \ge |\tau|+2$, so that the sum appearing
on the second line is automatically always finite.
It is not clear in principle that these integrals converge, but it turns out that the analytic conditions
\eref{e:bounds} guarantee that this is always the case, see \cite[Sec.~5]{Regularity}.

\begin{remark}\label{rem:redundant1}
Since $f_z \in \CG_+$, so that it is multiplicative, \eref{e:admissible1} and \eref{e:admissible2}
do specify $f_z$ (and therefore $F_z$ via \eref{e:Gammaf}) 
completely, once we know $\Pi_z$. There is therefore
quite a lot of rigidity in these definitions, which makes the mere existence of admissible models a 
highly non-trivial fact.
\end{remark}

\begin{remark}\label{rem:redundant2}
Building further on Remark~\ref{rem:redundant1}, it actually turns out that if the map $\Pi \colon \R^{d+1} \to \CL(\CT,\CS')$ 
satisfies the {\em first} analytical bound in \eref{e:bounds} and
is such that, for $F$ defined from $\Pi$ via \eref{e:admissible}, one has the 
identities \eref{e:admissible} and \eref{e:algebraic}, then the second analytical bound in \eref{e:bounds}
is {\em automatically} satisfied. This is a consequence of \cite[Thm.~5.14]{Regularity}.
\end{remark}

Since we will only ever consider \eref{e:AC} with periodic boundary conditions, we will always assume that 
the model $(\Pi,F)$ is periodic in the following sense. We are given $d$ vectors $e_1,\ldots,e_d \in \R^d$ and we
denote by $T_i\colon \R^{d+1} \to \R^{d+1}$ the translation maps
\begin{equ}
T_i \colon (t,x) \mapsto (t,x+e_i)\;.
\end{equ}
We also define the natural action of $T_i$ on test functions by $(T_i \phi)(z) = \phi(T_i^{-1} z)$.
We then say that the model $(\Pi,F)$ is periodic if
\begin{equ}[e:periodic]
\bigl(\Pi_{T_i z} \tau \bigr)(T_i\phi) = \bigl(\Pi_{z} \tau \bigr)(\phi)\;,\qquad F_{T_i z} = F_z\;,
\end{equ}
for every $z \in \R^{d+1}$, every smooth test function $\phi$, every $\tau$, and every $i$.
This definition is consistent in the sense that if $\Pi$ satisfies the first identity in \eref{e:periodic}
and $F$ is given by \eref{e:admissible2}, then $F$ automatically satisfies the second identity.

Given two admissible models $(\Pi,F)$ and $(\bar \Pi, \bar F)$, the bounds \eref{e:bounds} yield a natural
notion of a semi-distance between them by considering, for any given compact domain $D \subset \R^{d+1}$,
the quantity
\begin{equ}[e:distPi]
\$\Pi-\bar \Pi\$ = \sup_{z \in D} \sup_{\phi\in \CB \atop \lambda\in(0,1]} \sup_{\tau \in \CW} \lambda^{-|\tau|}{\bigl|\bigl(\Pi_z \tau - \bar \Pi_z \tau\bigr)(\phi_z^\lambda)\bigr| }\;.
\end{equ}
While this seminorm does of course depend on the domain $D$, we will always take
$D = I \times \R^d$ for some large enough interval $I$. The precise length of $I$ 
does not really matter as long as it contains $[-2,T+2]$, where $T$ is the 
final time for which we wish to obtain our large deviations principle.
We will use the following definition.

\begin{definition}
The space $\MM^{(d)}$ consists of all periodic admissible models for the regularity structure $(\CT^{(d)}, \CG)$
built above in dimension $d\in \{2,3\}$. It is endowed with the metric \eref{e:distPi} for the domain
$D = I \times \R^d$ with a sufficiently large interval $I$.
\end{definition}

\begin{remark}
As defined here, the space $\MM^{(d)}$ is not separable which might lead to technical difficulties.
In practice, these can easily be resolved by defining $\MM^{(d)}$ as the closure of the set of
\textit{smooth} admissible models, i.e. the set of models for which 
$\Pi\colon \R^{d+1} \to \CL(\CT_\min^{(d)},\CC^{\infty}(\R^{d+1}))$ is a smooth function,  under the distance \eref{e:distPi}, which is a separable space. 
\end{remark}

It is extremely important at this stage to note that $\MM^{(d)}$ is in general \textit{much} smaller
than the space of all (periodic in the sense of \eref{e:periodic}) 
functions $\Pi\colon \R^{d+1} \to \CL(\CT,\CS')$ such that \eref{e:distPi}
is finite. This is because we still have the requirement that the identities \eref{e:algebraic}
and \eref{e:admissible} hold. It may also
appear that the ``norm'' \eref{e:distPi} neglects to control the
second bound in \eref{e:bounds} but, by Remark~\ref{rem:redundant2}, this bound actually holds automatically on
$D' = I'\times \R^d$ if we know that $\$\Pi\$$ is finite on $D = I \times \R^d$ for $I$ 
containing a large enough neighbourhood of $I'$. (Essentially $D'$ needs to be large enough
to contain the support of $K(\cdot - z)$ for every $z \in D$ so that one can apply
\eqref{e:admissible2}.)

Given any smooth space-time function $\xi_\delta$ which is periodic in the sense that $T_i\xi_\delta = \xi_\delta$, 
there is a canonical way of lifting it to an admissible periodic model
$\Psi(\xi_\delta) = (\Pi^\delta, F^\delta)$ as follows. First, we set $\Pi^\delta_z \Xi = \xi_\delta$, independently
of $z$, and we define it on $X^k$ as in \eref{e:admissible1}. 
Then, we define $\Pi^\delta_z$ recursively by \eref{e:admissible3}, as well as the additional identity
\begin{equ}[e:canonical]
\bigl(\Pi_z^\delta \tau \bar \tau\bigr)(\bar z) = \bigl(\Pi_z^\delta \tau\bigr)(\bar z)
\bigl(\Pi_z^\delta \bar \tau\bigr)(\bar z)\;.
\end{equ}
Note that this is guaranteed to make sense if $\xi_\delta$ is a function but not for more general distributions! 
It was shown in
\cite[Prop.~8.27]{Regularity} that if we furthermore define $F^\delta$ via \eref{e:admissible2}, then
this does indeed define an admissible model for every continuous function $\xi_\delta$.
It is however very important to keep in mind that not every admissible model is obtained in this way, 
or even as a limit of such models. We will use this fact in Section~\ref{sec:renorm} below.

Since an admissible model $(\Pi,F)$ is interpreted as 
an extension of the usual Taylor polynomials, it is quite natural to define 
spaces $\CD^{\gamma,\eta}$ which mimic a weighted
version of the H\"older spaces $\CC^\gamma$ in the following way.

\begin{definition}
A function $U \colon \R^{d+1} \to \CT_{<\gamma}$ belongs to $\CD^{\gamma,\eta}$ if, for every
compact domain $D$, one has
\begin{equ}[e:normU]
\|U\|_{\gamma,\eta} \eqdef \sup_{z \in D}\sup_{\alpha < \gamma} {\|U(z)\|_\alpha \over |t|^{(\eta - \alpha)\wedge 0}}
+ \sup_{z \neq \bar z\in D \atop |z-\bar z| \le 1}\sup_{\alpha < \gamma} {\|U(z) - \GGamma{z\bar z} U(\bar z)\|_\alpha \over \bigl(|t|\wedge |\bar t|\bigr)^{\eta-\gamma} |z-\bar z|^{\gamma - \alpha}} < \infty\;.
\end{equ}
Here, we wrote $\|\tau\|_\alpha$ for the norm of the component of $\tau$ in $\CT_\alpha$ and we used as before the notation $\GGamma{z\bar z} = F_z^{-1} F_{\bar z}$.
We also used $t$ and $\bar t$ as shorthands for the time components of the space-time points $z$ and $\bar z$.
\end{definition}

\begin{remark}
The powers of $t$ appearing in this definition allow elements of $\CD^{\gamma,\eta}$ to exhibit a
singularity on the line $\{(t,x)\,:\, t = 0\}$. This is essential in order to be able to deal with solutions to
\eref{e:AC} with distributional initial conditions.
\end{remark}

\begin{remark}
In order to streamline notations, we suppressed the dependence on the domain $D$ in this norm.
This is because, similarly to our definition of admissible models,
we will only ever use this on some fixed space-time domain.
\end{remark}

The space $\CD^{\gamma,\eta}$ depends in a crucial way on the underlying model
$(\Pi,F)$. Therefore, it is not obvious \textit{a priori} how to compare elements belonging to $\CD^{\gamma,\eta}$,
but based on two different models. This is however crucial when investigating the continuity of solutions
to \eref{e:AC} as a function of the underlying model.
A natural distance between elements $U \in \CD^{\gamma,\eta}$ and $\bar U \in \bar \CD^{\gamma,\eta}$
(denoting by $\bar \CD^{\gamma,\eta}$ the space built over the model $(\bar \Pi, \bar F)$), is given by \eref{e:normU},
with $U(z)$ replaced by $U(z) - \bar U(z)$ in the first term and $U(z) - \GGamma{z\bar z} U(\bar z)$ replaced by
\begin{equ}
U(z) - \bar U(z) - \GGamma{z\bar z} U(\bar z) +  \bGGamma{z\bar z} \bar U(\bar z)
\end{equ}
in the second term. Note that this distance is \textit{not} a function of $U - \bar U$!

The idea now is to reformulate \eref{e:AC} as a fixed point problem in $\CD^{\gamma,\eta}$ 
(based on the canonical model $\Psi(\xi_\delta)$ built above) for suitable values of the exponents 
$\gamma$ and $\eta$. As a matter of fact, we will view it as a fixed point problem in
the subspace $\CD_\CU^{\gamma,\eta} \subset \CD^{\gamma,\eta}$ consisting of those functions
taking values in the linear span of $\CU$. If we consider an admissible model consisting of smooth
functions, then one can define an operator $\CR$ acting on $\CD^{\gamma,\eta}$ and taking values in the
space of continuous functions by
\begin{equ}[e:defR]
\bigl(\CR U\bigr)(z) = \bigl(\Pi_z U(z)\bigr)(z)\;.
\end{equ}
A remarkable fact, and this is the content of \cite[Thm~3.10]{Regularity}, is that as soon
as $\gamma > 0$ the map
\begin{equ}[e:mapR]
(\Pi,U)\mapsto \CR U \in \CS'
\end{equ}
given by \eref{e:defR} is jointly (locally)
Lipschitz continuous with respect to the metric defined in \eref{e:normU} and \eref{e:distPi}, so that 
it makes sense even in situations where the definition \eref{e:defR} is nonsensical!
This of course relies very heavily on the fact that we \textit{only} consider admissible models in \eref{e:mapR}
and not arbitrary functions $\Pi\colon \R^{d+1} \to \CL(\CT,\CS')$. The map $\CR$ is called the \textit{reconstruction operator}
since it reconstructs the (global) distribution $\CR U$ from the (local) data $U$ and $\Pi$.

\subsection{Continuity properties}\label{ss:continuity}

As we will see below, it turns out that the solution to \eref{e:AC} can be obtained, via a suitable
fixed point argument, as a \textit{jointly continuous} map of the initial condition and a suitable
random model $(\Pi,F)$ built from the underlying space-time white noise. As a consequence of the contraction
principle, it will therefore be sufficient to obtain a suitable large deviations principle for 
the random model. For this, we want to reduce ourselves to the ``minimal'' amount of information
necessary to reconstruct the whole model. Denote by $\CW_\min^{(d)} \subset \CW$ the list of symbols of
negative order in dimension $d$, which do not contain any factor $X_i$. 
In dimension $d=2$ these are given by
\begin{equ}
\CW_\min^{(2)} = \{\Xi, \<1>\ , \<2>\ , \<3>\}\;,
\end{equ}
while in dimension  $d=3$, one has
\begin{equ}[e:CW3m]
\CW_\min^{(3)} = \{\Xi, \<1>\ , \<2>\ , \<3>\ , \<32>\ , \<22>\ , \<31>\}\;.
\end{equ}
(Here and in the sequel, we assume that $\kappa$ is small enough. Otherwise, these lists could expand.)
We also denote by $\CT_\min^{(d)} \subset \CT$ the linear span of $\CW_\min^{(d)}$.
We now define the space $\MMm^{(d)}$ (with $d = 2,3$) of ``minimal admissible models'' 
in the following way. 

It is natural to take for $\MMm^{(d)}$ the closure of the set of smooth maps
$\Pi\colon \R^{d+1} \to \CL(\CT_\min^{(d)},\CC^{\infty}(\R^{d+1}))$ that are periodic in the sense given previously,
under the norm given by \eref{e:boundsM02} below. We furthermore impose that the constraint 
\eqref{e:algebraic} holds with $F_z$ given by \eqref{e:admissible2}.
Besides providing a bound of the type \eref{e:bounds}, we also want to
ensure that $\Pi_z \<1>$ is not just a space-time distribution of order $|\<1>|$,
but actually a continuous function of time with values in a space of distributions of order $|\<1>|$.
In order to formulate this, we denote by $\CB_0$ a set of test functions just like $\CB$, but depending only on
the spatial variable. Their translated and rescaled versions $\phi_x^\lambda$ are defined analogously 
to \eref{e:rescale}. We then assume that $\Pi$ is such that
\begin{equs}[e:boundsM02]
\sup_{\lambda\in (0,1]} \sup_{\phi \in \CB} \sup_{\tau \in \CW_\min^{(d)}} \sup_{z \in D} \lambda^{-|\tau|} \bigl|\scal{\Pi_z \tau,\phi_z^\lambda}\bigr| &< \infty \;, \\
\sup_{\lambda\in (0,1]} \sup_{\phi \in \CB} \sup_{s \in \R} \sup_{z \in D} \lambda^{-|\Xi|} \bigl|\scal{\one_{t \ge s}\Pi_z \Xi,\phi_z^\lambda}\bigr| &< \infty \;,\\
\sup_{\lambda\in (0,1]} \sup_{\phi \in \CB_0} \sup_{z \in D} \lambda^{-|\<1>|} \bigl|\scal{(\Pi_z \<1>)(t,\cdot),\phi_x^\lambda}\bigr| &< \infty \;,
\end{equs}
where $\scal{\cdot,\cdot}$ denotes the usual $L^2$ inner product and $D = I \times \T^2$ for some fixed but
large enough interval $I$.
Furthermore, we impose that the identities \eref{e:algebraic} and \eref{e:admissible} 
hold. In order for this definition to make sense,
we need to make sure that we have enough data to define the linear maps $F_z$ on $\CT_\min^{(d)}$. 
In dimension $d=2$, it is straightforward to verify that $\Delta \tau = \tau \otimes \one$
for every $\tau \in \CW_\min^{(2)}$, so that the action of $\CG$ on $\CT_\min^{(2)}$ is trivial.
The condition \eref{e:algebraic} then reduces to stating that $\Pi_z$ does actually not depend on $z$ at all.
In particular, the space $\MMm^{(2)}$ is a separable Banach space.

In dimension $d=3$ and for $\kappa$ sufficiently small,
we note that one has the identities
\begin{equs}
\Delta \<22> &= \<22>\otimes \one + \<2>\otimes \CJ(\<2>)\;,\quad
\Delta \<31> = \<31>\otimes \one + \<1>\otimes \CJ(\<3>)\;,\\
\Delta \<32> &= \<32>\otimes \one + \<2>\otimes \CJ(\<3>)\;,
\end{equs}
so that the action of $F_z$ on $\CT_\min^{(3)}$ only requires \eref{e:admissible2} with $\tau \in \{\<2>,\<3>\}$.
This data is available from $\Pi_z \<2>$ and $\Pi_z \<3>$ (which actually do not depend on $z$), so that the 
first condition in \eref{e:boundsM02}  is indeed meaningful. 
Note that, unlike  $\MMm^{(2)}$, $\MMm^{(3)}$ is \textit{not} a linear space.
It is however a separable metric space and can be viewed as a closed subset of the separable Banach space
obtained by relaxing \eref{e:algebraic} to the requirement that 
$\Pi_z \tau = \Pi_{\bar z} \tau$ for $\tau \in \CW_\min^{(2)}$.

The reason why the spaces $\MMm^{(d)}$ are important is the following result.

\begin{theorem}\label{theo:extension}
For every $\PPi \in \MMm^{(d)}$ there exists a unique admissible model $(\Pi, F) \in \MM^{(d)}$
such that $\Pi_z \tau = \PPi_z \tau$ for all $\tau \in \CW_\min^{(d)}$ and $z \in \R^{d+1}$.
Furthermore, the map $\PPi \mapsto (\Pi,F)$ is locally Lipschitz continuous. 
\end{theorem}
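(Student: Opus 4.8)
The plan is to build the extended model $(\Pi,F)$ from the minimal data $\PPi$ by a recursive procedure that mimics the construction of the canonical model $\Psi(\xi_\delta)$, but driven by $\PPi$ rather than by a smooth noise. Concretely, given $\PPi \in \MMm^{(d)}$, first I would define $F_z$ on all of $\CT$ by iterating \eref{e:admissible2}: the multiplicativity of $f_z$ and the recursive structure of $\Delta$ mean that $f_z(\CJ_k\tau)$ for $\tau \in \CW_\min^{(d)}$ determines $f_z(\CJ_k\tau)$ for \emph{all} $\tau \in \CW$, because every symbol of $\CW$ is obtained from those of negative homogeneity and from $\CI(\Xi)$ by multiplication and application of $\CI$, together with polynomial factors. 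Next, I would define $\Pi_z\tau$ for $\tau$ not in $\CW_\min^{(d)}$: for symbols of positive homogeneity this is forced by \eref{e:admissible3}, \eref{e:admissible1}, and the requirement that $\Pi_z F_z^{-1}$ be independent of $z$ — explicitly, one sets $\Pi_z\tau = \Pi_z F_z^{-1} F_z \tau$ and uses that $F_z\tau - \tau$ involves only lower-homogeneity symbols, so the definition closes by induction on homogeneity. The map $\Pi_\cdot F_\cdot^{-1}$ on $\CT_\min^{(d)}$ is well-defined by \eref{e:algebraic}, which is part of the definition of $\MMm^{(d)}$, and this is the object one extends.

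The key structural point, which I would verify first, is that in both $d=2$ and $d=3$ the truncated regularity structure $\CT = \CT_{<\zeta}$ contains no symbol of \emph{negative} homogeneity outside $\CW_\min^{(d)}$ and outside the span of the $X_i$-decorated symbols whose $\Pi_z$ is already fixed by \eref{e:admissible1}. This is exactly the content of the lists displayed in \eref{e:CW3m} (for $\kappa$ small), and it is what makes the extension canonical: every remaining basis vector has nonnegative homogeneity, so $\Pi_z\tau$ on such a vector is determined by a genuinely convergent integral against $K$ (via \eref{e:admissible3}) plus a polynomial correction, with no renormalisation freedom. I would then check by induction on homogeneity that the resulting $(\Pi,F)$ satisfies the analytic bound $|(\Pi_z\tau)(\phi_z^\lambda)| \lesssim \lambda^{|\tau|}$: for $\tau \in \CW_\min^{(d)}$ this is hypothesis \eref{e:boundsM02}; for higher symbols it follows from the multi-level Schauder estimate of \cite[Sec.~5]{Regularity} applied to the integral defining $\Pi_z \CI\tau$, together with the product bounds, all of which are already set up in \cite{Regularity} to be quantitative in $\$\PPi\$$. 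By Remark~\ref{rem:redundant2} the second bound in \eref{e:bounds} is then automatic, and \eref{e:periodic} is inherited from periodicity of $\PPi$.

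For \textbf{uniqueness}, I would argue again by induction on homogeneity: two admissible models agreeing with $\PPi$ on $\CW_\min^{(d)}$ must have the same $F_z$ (forced by \eref{e:admissible2} and multiplicativity, as above), hence the same $\Gamma_{z\bar z}$, and then \eref{e:admissible3} together with \eref{e:algebraic} forces $\Pi_z\tau$ to agree on every remaining basis vector. For \textbf{local Lipschitz continuity}, I would run the same inductive construction on the \emph{difference} of two models: each step of the recursion (applying $\CI$, taking products, applying $\Gamma$) is, by the estimates of \cite[Secs.~5--6]{Regularity}, locally Lipschitz in the models it is built from, with constants depending only on the sizes of the input data; chaining these finitely many steps (finitely many because $\CT$ is finite-dimensional) yields the claim with respect to the metric \eref{e:distPi}.

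The main obstacle I anticipate is purely bookkeeping rather than conceptual: one must check that the inductive hypothesis is strong enough to close, i.e.\ that at each homogeneity level the relevant multi-level Schauder and product estimates of \cite{Regularity} only ever require data at strictly lower levels or data already in $\CW_\min^{(d)}$, and that the integrals in \eref{e:admissible2}--\eref{e:admissible3} genuinely converge at the one or two borderline homogeneities (this is where the second line of \eref{e:boundsM02}, controlling $\Pi_z\Xi$ as a continuous function of time, is used, and in $d=3$ one must be slightly careful with $\<32>$, $\<22>$, $\<31>$ sitting right below homogeneity $0$). Once the homogeneity stratification is pinned down — which is a finite check given the explicit lists — the rest is a routine application of the machinery of \cite[Secs.~5--8]{Regularity}.
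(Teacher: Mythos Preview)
Your proposal is correct and follows essentially the same route as the paper, which simply cites Proposition~3.31 and Theorem~5.14 of \cite{Regularity}; you have effectively unpacked what those two results do (extension via the abstract integration map and extension to products of positive homogeneity via the reconstruction theorem) into an inductive scheme on homogeneity. One small remark: the identity ``$\Pi_z\tau = \Pi_z F_z^{-1} F_z \tau$'' you write is a tautology and does not by itself define $\Pi_z$ on new symbols---the actual mechanism for the product case is the uniqueness part of the reconstruction theorem (this is precisely Prop.~3.31), which you allude to only obliquely via ``product bounds''; making that explicit would sharpen the argument.
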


\begin{proof}
This is an immediate consequence of Proposition~3.31 and Theorem~5.14 of \cite{Regularity}.
\end{proof}

\subsection{Abstract fixed point problem}

We now reformulate \eref{e:AC} as a fixed point problem in
$\CD_\CU^{\gamma,\eta}$ for suitable values of $\gamma$ and $\eta$. Note first that by Duhamel's formula, \eref{e:AC} is equivalent for smooth $\xi$ to the integral
equation
\begin{equ}[e:ACint]
u = P \star \bigl((\xi + C u - u^3)\one_{t > 0}\bigr) + Pu_0\;.
\end{equ}
Here, $P$ denotes the heat kernel, $\star$ denotes space-time convolution, 
and $Pu_0$ denotes the solution to the heat equation with initial condition $u_0$.
A local solution is a pair $(u,\tau)$ with $\tau > 0$ and such that \eref{e:ACint}
holds on $[0,\tau] \times \R^d$. In order to interpret this equation as an identity
in $\CD^{\gamma,\eta}$, we need to find an analogue for the operation of 
convolution against $P$.

It turns out that, provided that $\gamma < \bar \gamma + 2$, that $\eta < |\<1>| = 1-{d\over 2}-\kappa$
and that $\bar \eta > -2$,
it is possible to construct a linear operator 
$\CP \colon \CD^{\bar \gamma,\bar \eta}\to \CD_\CU^{\gamma,\eta}$ 
with the following properties:
\begin{enumerate}
\item One has the identity $\CR \CP U = P \star \CR U$.
\item One has $\CP U = \CI U + \tilde \CP U$\;, where $\tilde \CP U$ only takes values in $\bar \CT$,
the linear span of $\{X^k\}$.
\item There exists $\theta > 0$ such that
\begin{equ}
\|\CP \one_{t > 0}U\|_{\gamma,\eta} \le T^\theta \|U\|_{\bar \gamma, \bar \eta}\;,
\end{equ}
where the norms are taken over the domain $[0,T] \times \R$.
\end{enumerate}
Furthermore, even though $\one_{t > 0}\Xi \not \in \CD^{\bar \gamma,\bar \eta}$
with $\bar \gamma$ and $\bar \eta$ as above, it turns out that if the second bound
in \eref{e:boundsM02} is satisfied (in particular if the model comes from an 
element of $\MMm^{(d)}$ as in Theorem~\ref{theo:extension}), then there exists an element
$\CP \one_{t > 0}\Xi \in \CD^{\gamma,\eta}_\CU$ satisfying all of the properties (1)--(3).
For the precise construction of $\CP$ and a 
proof of these properties, see Section~5, Proposition~6.16, 
and Theorem~7.1 in \cite{Regularity}.

Finally, given a function $u$ of (parabolic)
class $\CC^\gamma$, we write $\TT_\gamma u$ for its Taylor expansion
of (parabolic) order $\gamma$, namely 
\begin{equ}
\bigl(\TT_\gamma u\bigr)(z) = \sum_{|k| < \gamma} {X^k \over k!} \bigl(D^k u\bigr)(z) \in \bar \CT \subset \CT\;. 
\end{equ}
It was then shown in \cite[Lemma~7.5]{Regularity} that one has $\TT_\gamma P u_0\in \CD^{\gamma,\eta}_\CU$,
provided that $u_0 \in \CC^\alpha(\T^d)$ for some $\alpha > \eta$. Note that this is true 
for arbitrary $\eta \in \R$, in particular one can have $\eta < 0$.

With all of these notations at hand, we can lift \eref{e:ACint} in a very natural way
to a fixed point problem in $\CD_\CU^{\gamma,\eta}$, by looking for solutions $U$ to
\begin{equ}[e:FP]
U = \CP  \one_{t > 0} \bigl(\Xi + C U - U^3\bigr) + \TT_\gamma P u_0\;.
\end{equ}
According to \cite[Prop.~6.12]{Regularity}  for $U \in \CD_\CU^{\gamma,\eta}$ we can actually define $U^3$ as a modelled distribution. 
Similarly to \eref{e:ACint}, we call a pair $(U,\tau)$ with $\tau > 0$ a local
solution if the identity \eref{e:FP} holds on $[0,\tau) \times \R^d$. 

\begin{remark}
For \eref{e:FP} to make sense, we need a suitable definition for the element 
$\CP  \one_{t > 0}\Xi$. As already mentioned, this is the case if the underlying
model comes from an element of $\MMm^{(d)}$.
\end{remark}

We will say that local solutions are unique if, whenever $(U,\tau)$ and
$(\bar U,\bar \tau)$ are local solutions, one has $U(t,x) = \bar U(t,x)$ for
$t \in (0, \tau \wedge \bar \tau]$. Note also that if $U \in \CD_\CU^{\gamma,\eta}$
with $\gamma > 0$ and the underlying model comes from an element of $\MMm^{(d)}$,
then $\CR U$ is a continuous function of time with values in the space 
$\CC^{\beta} = \CC^{\beta}(\T^d)$ of periodic distributions in space of regularity $\beta = |\<1>| = 1-{d\over 2} - \kappa$ (see for example \cite[Def. 3.7]{Regularity} for a definition of $\CC^{\beta}$ for $\beta<0$. On a bounded domain, it agrees with the Besov space $\CB^{\beta}_{\infty, \infty}$).
Given a final time $T>0$, we say that a local solution 
$(U,\tau)$ to \eref{e:FP} is \textit{maximal} if either $\tau = T$ or
$\lim_{t \to \tau} \|(\CR U)(t,\cdot)\|_{\beta} = \infty$.
Given a final time $T$, it will be convenient to view solutions as elements of the
space $\CX = \CC([0,T], \CC^{\eta}) \cup \{\infty\}$ for some $\eta < \beta$, 
which is a metric space by postulating that the ``cemetery state'' $\infty$ is at 
distance $1$ of all other elements.

One then has the following result, where we implicitly assume that
$\kappa$ is sufficiently small ($\kappa < 1/10$ is sufficient for example).

\begin{theorem}\label{theo:gen}
Let $\gamma \in (1+4\kappa,\zeta)$, and let $\eta \in (-{2\over 3}, 1-{d\over 2}-\kappa)$. Then, for every admissible model $(\Pi,F)$ arising from an element of $\MMm^{(d)}$, \eref{e:FP} admits a unique maximal solution
in $\CD_\CU^{\gamma,\eta}$.

Furthermore, if there exists a smooth function $\xi_\delta$ such that $(\Pi,F) = \Psi(\xi_\delta)$, then
this solution is global and the function $u = \CR U$ solves \eref{e:ACint}. Finally, define the 
solution map $\CS_A\colon \R \times \CC^\eta \times \MMm^{(d)} \to \CX$ which maps $(C,u_0,\PPi)$
onto $\CR U$ if the maximal solution $U$ to \eqref{e:FP} satisfies $\tau > T$ and to $\infty$ otherwise.
Then, for every $Z = (C,u_0,\PPi)$ such that $\CS_A(Z) \neq \infty$, there exists a neighbourhood $\CO$ of 
$Z$ such that $\CS_A$ is Lipschitz continuous from $\CO$ to $\CX$.
\end{theorem}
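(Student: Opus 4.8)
The plan is to run a standard contraction-mapping / fixed-point argument in the space $\CD_\CU^{\gamma,\eta}$, with the novelty being only the bookkeeping of the explosion time and the dependence on all three data $(C,u_0,\PPi)$ jointly. First I would fix a truncation level $\zeta \ge 2$ as in Remark~\ref{rem:cutoff}, fix auxiliary exponents $\bar\gamma \in (\gamma-2,\gamma)$ and $\bar\eta \in (-2,\eta)$ so that the mapping properties of $\CP$ recalled before \eqref{e:FP} apply, and define the map
\begin{equ}
\CM_{Z,T'}(U) = \CP\one_{t>0}\bigl(\Xi + CU - U^3\bigr) + \TT_\gamma Pu_0\;,
\end{equ}
acting on the ball of radius $R$ in $\CD_\CU^{\gamma,\eta}$ over the time interval $[0,T']$. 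The key analytic inputs are all already in the excerpt: (i) $\CP\one_{t>0}$ gains a factor $T'^\theta$ in the $\CD^{\gamma,\eta}\to\CD_\CU^{\gamma,\eta}$ bound (property (3) before \eqref{e:FP}); (ii) $U\mapsto U^3$ maps $\CD_\CU^{\gamma,\eta}$ into $\CD_\CU^{\bar\gamma,\bar\eta}$ continuously (cited \cite[Prop.~6.12]{Regularity}), with a Lipschitz bound that is polynomial in $\|U\|_{\gamma,\eta}$; (iii) $\CP\one_{t>0}\Xi$ is a well-defined element of $\CD_\CU^{\gamma,\eta}$ whose norm is controlled by $\$\Pi\$$ (and in fact by the norms in \eqref{e:boundsM02}), using Theorem~\ref{theo:extension} to pass from $\PPi\in\MMm^{(d)}$ to the full admissible model; (iv) $\TT_\gamma Pu_0 \in \CD_\CU^{\gamma,\eta}$ with norm controlled by $\|u_0\|_{\CC^\eta}$ (cited \cite[Lemma~7.5]{Regularity}, legitimate since $\eta > -2/3 > -2$ and $\eta < \beta$). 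Combining (i)--(iv), for $R$ large enough (depending on the data) and $T'$ small enough (depending on $R$, hence on the data), $\CM_{Z,T'}$ maps the ball of radius $R$ to itself and is a strict contraction there; Banach's theorem gives a unique local solution $(U,T')$. Standard patching of local solutions, using that the equation is autonomous in time and that the $\CD^{\gamma,\eta}$-norm over $[0,\tau]$ controls $\|(\CR U)(\tau,\cdot)\|_\beta$, yields a unique maximal solution with the stated blow-up alternative; uniqueness in the sense defined before the theorem follows because any two local solutions agree on a small interval and the argument bootstraps.

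For the second paragraph of the statement: when $(\Pi,F)=\Psi(\xi_\delta)$ for smooth $\xi_\delta$, one checks that $\CR U$ solves \eqref{e:ACint} using property (1) of $\CP$ ($\CR\CP V = P\star\CR V$) together with $\CR\CP\one_{t>0}\Xi = P\star(\xi_\delta\one_{t>0})$ and the fact that $\CR$ is multiplicative on the canonical model so $\CR(U^3) = (\CR U)^3$; globality then follows from the classical well-posedness of \eqref{e:AC} for smooth noise, which forbids finite-time blow-up of $\CR U$, hence (by the blow-up alternative and the fact that the abstract norm is controlled by $\$\Pi\$$ plus the reconstructed solution) of $U$.

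The continuity statement is the real point, and I would deduce it from the contraction estimates by the usual "uniform contraction with parameter" argument. Fix $Z=(C,u_0,\PPi)$ with $\CS_A(Z)\ne\infty$, i.e.\ the maximal solution exists past $T$. Then along a compact time interval the solution $U$ has a uniform $\CD^{\gamma,\eta}$-bound $R_0$; choose $R = 2R_0$ and a single small step size $h$ that works for all data in a neighbourhood $\CO$ of $Z$ (possible since the thresholds for $R,h$ depend only continuously, indeed polynomially, on $(|C|,\|u_0\|_{\CC^\eta},\$\Pi\$)$ through (i)--(iv)). On each step $[kh,(k+1)h]$ the solution map is Lipschitz in $(C,u_0,\PPi)$ and in the initial value at time $kh$ — this is the standard fact that the fixed point of a uniformly contracting family depends Lipschitz-continuously on the parameter, the contribution of the model entering through the Lipschitz dependence of $\CP\one_{t>0}\Xi$ and of $U\mapsto U^3$ on the model (here one must use the model-dependent distance between elements of $\CD^{\gamma,\eta}$ over different models, as discussed after \eqref{e:normU}, and Theorem~\ref{theo:extension} for the Lipschitz passage $\PPi\mapsto(\Pi,F)$). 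Composing the finitely many steps $k=0,\dots,\lceil T/h\rceil$ gives Lipschitz continuity of $Z\mapsto\CR U\in\CX$ on $\CO$, after shrinking $\CO$ so that all these solutions still exist past $T$ (which is an open condition by the blow-up alternative and the continuity just established on $[0,T-h]$, say). Finally one checks that $\CR U \in \CC([0,T],\CC^\beta)\subset\CX$ and that the cemetery state is never hit on $\CO$, so the map indeed lands in the metric space $\CX$.

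The main obstacle, as always with these arguments, is the third paragraph: keeping track of the model-dependence of the spaces $\CD^{\gamma,\eta}$ so that "Lipschitz in the initial datum at time $kh$" and "Lipschitz in the model" can be composed across steps without the implicit constants degenerating. The safeguard is that over a compact time interval on which $\CS_A(Z)$ does not explode, all the relevant norms stay bounded, so the number of steps, the step size, and all Lipschitz constants can be fixed uniformly over a small enough neighbourhood $\CO$; with that in hand the composition is routine.
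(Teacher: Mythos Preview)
Your fixed-point argument and the iteration for local Lipschitz dependence on the data are correct and are exactly what the paper obtains by citing \cite[Sec.~7, Sec.~9]{Regularity}; in that sense your first three paragraphs reproduce the cited material.

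There is, however, a genuine gap at the very end, and it is precisely the one point the paper's proof does \emph{not} outsource. You write ``Finally one checks that $\CR U \in \CC([0,T],\CC^\beta)\subset\CX$'' and then claim Lipschitz continuity of $Z\mapsto \CR U$ into $\CX$. But the reconstruction operator, together with the $\CD^{\gamma,\eta}$-continuity you have established, only gives continuity into $\CC^{\eta}([0,T]\times\T^d)$ (parabolic space-time regularity, \cite[Prop.~6.9]{Regularity}), which for negative $\eta$ is strictly larger than $\CC([0,T],\CC^\eta)$: a space-time distribution of negative order need not be a continuous function of time with values in a spatial distribution space. So neither membership in $\CX$ nor continuity of the map in the $\CX$-topology follows from your contraction estimates alone.

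The paper closes this gap by going back to the Duhamel representation $\CR U = P\star(\xi\one_{t>0}) + P\star\bigl((C\,\CR U - \CR(U^3))\one_{t>0}\bigr) + Pu_0$ and arguing term by term: $U^3\in\CD^{\gamma',3\eta}$ so $\CR(U^3)\in\CC^{3(1-d/2-\kappa)}$ and Schauder gives $P\star\CR(U^3)\in\CC([0,T]\times\T^d)$; $Pu_0\in\CX$ trivially; and, crucially, $P\star(\xi\one_{t>0})\in\CX$ uses the \emph{second and third} bounds in \eqref{e:boundsM02}, which are the extra analytic conditions built into $\MMm^{(d)}$ precisely for this purpose. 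Since your argument never invokes those bounds, it cannot produce this conclusion. Adding this short Duhamel/Schauder step (with continuous, indeed Lipschitz, dependence of each piece on the data) completes your proof.
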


\begin{proof}
This is a synthesis of the results of \cite[Sec.~7]{Regularity} and \cite[Sec.~9]{Regularity},
see also \cite{ICM} for an informal overview of the proof. The continuous dependence on $C$ is
not stated explicitly there, but is implicit in the proofs.

The only fact which remains to be shown is that the map $(\Pi,F) \mapsto \CR U$ is (locally) 
continuous with values in $\CX$. Indeed, the reconstruction operator $\CR$ only maps the space
$\CD_\CU^{\gamma,\eta}$ into $\CC^{\eta}([0,T]\times \T^d)$ in general \cite[Prop.~6.9]{Regularity}, 
which is strictly larger than $\CX$. 
However, one does get continuity of $\CS_A$ into $\CX$. The reason why this is true is that 
by \eqref{e:ACint} the solution $\CR U$ is of the form
\begin{equs}
\CR U = P \star \bigl(\xi + C \CR U - \CR(U^3))\one_{t > 0}\bigr) + Pu_0\;,
\end{equs}
with $\CR$ the reconstruction operator. 
Since $U$ takes values in $\CD_\CU^{\gamma,\eta}$, it follows from \cite[Prop.~6.12]{Regularity} 
that $U^3$ takes values 
in $\CD^{\gamma',3\eta}$ for some $\gamma'> 0$. By \cite[Prop. 6.9]{Regularity}, one 
therefore obtains that $\CR(U^3) \in \CC^{3(1 - \frac{d}{2} -\kappa)}([0,T]\times \T^d)$, which in particular shows 
by usual (parabolic) Schauder estimates that $P \star \CR(U^3)$ takes values in $\CC([0,T]\times \T^d) \subset \CX$. Combining this with the fact that both $Pu_0$ and  $P \star \xi$ belong to $\CX$ (the latter
as a consequence of the second and third bound in \eqref{e:boundsM02}) concludes the proof.
\end{proof}

\subsection{Renormalised model}
\label{sec:renorm}

It turns out that, if we take for $\xi_\delta$ some mollification of space-time white noise then
the sequence $\Psi(\xi_\delta)$, with the canonical lift $\Psi$ defined via \eref{e:canonical},
does \textit{not} converge in $\MM^{(d)}$ (and a fortiori not in $\MMm^{(d)}$). Instead, we fix two renormalisation constants 
$C_\delta^{(1)}$ and $C_\delta^{(2)}$ and we define a ``renormalised lift'' $\hat{\Psi}_\delta$ in the
following way. Regarding $\hat{\Pi}^\delta_z \<1>$ (which actually doesn't depend on $z$), we
define it from $\hat{\Pi}^\delta_z \Xi = \xi_\delta$ via \eref{e:admissible3}.

In dimension $d=2$, we then replace \eref{e:canonical} by the identities
\begin{equ}[e:Wick]
\hat\Pi^\delta_z \<2> = \bigl(\hat\Pi^\delta_z \<1>\bigr)^2 - C_\delta^{(1)}\;,\quad
\hat\Pi^\delta_z \<3> = \bigl(\hat\Pi^\delta_z \<1>\bigr)^3 - 3C_\delta^{(1)}\hat\Pi^\delta_z \<1>\;.
\end{equ}
We then use Theorem~\ref{theo:extension} to extend this to all of $\CT^{(2)}$.

In dimension $d=3$, we also define $\Pi^\delta_z$ on $\<2>$ and $\<3>$ as in \eref{e:Wick},
but we furthermore set
\begin{equs}
\hat\Pi^\delta_z \<22> &= \bigl(\hat\Pi^\delta_z \<20>\bigr)\bigl(\hat\Pi^\delta_z \<2>\bigr) - C_\delta^{(2)}\;,\quad
\hat\Pi^\delta_z \<32> = \bigl(\hat\Pi^\delta_z \<30>\bigr)\bigl(\hat\Pi^\delta_z \<2>\bigr) - 3C_\delta^{(2)} \hat\Pi^\delta_z\<1>\;,\\
\hat\Pi^\delta_z \<31> &= \bigl(\hat\Pi^\delta_z \<30>\bigr)\bigl(\hat\Pi^\delta_z \<1>\bigr)\;,\label{e:3dWick}
\end{equs}
where $\hat\Pi^\delta_z \<20>$ and $\hat\Pi^\delta_z \<30>$ are defined from
$\hat\Pi^\delta_z \<2>$ and $\hat\Pi^\delta_z \<3>$ through \eref{e:admissible3}.
Again, we then use Theorem~\ref{theo:extension} to extend this to all of $\CT^{(3)}$.
Here, the non-trivial fact that $\hat\Pi^\delta$ defined in this way indeed belongs to 
$\MMm^{(3)}$ was shown in Theorem~8.44 and Section~9.2 of \cite{Regularity}.

As usual, we define $\hat F^\delta_z$ through \eref{e:admissible2} and we write $(\hat \Pi^\delta, \hat F^\delta) = \hat{\Psi}_\delta(\xi_\delta)$. 
With these definitions, the following theorem is the main result of \cite[Sec.~10]{Regularity}.

\begin{theorem}\label{theo:mainregularity}
Let $\rho \colon \R^{d+1} \to \R$ be a compactly supported smooth function with $\int \rho = 1$ and 
set
\begin{equ}
\rho_\delta(t,x) = \delta^{-d-2} \rho(\delta^{-2}t, \delta^{-1}x)\;,\qquad \xi_\delta = \rho_\delta \star \xi\;,
\end{equ}
where $\xi$ is periodic space-time white noise.

There exist choices of constants $C_\delta^{(1)}$ and $C_\delta^{(2)}$ such that the sequence
$\hat{\Psi}_\delta(\xi_\delta)$ converges in probability in $\MMm^{(d)}$ to a limiting model $\hat \Pi$.
Furthermore, the function $u = \CR U$, where $U$ is the (local) solution to \eref{e:FP} 
with model $\hat \Pi^\delta$ is the classical solution to 
\begin{equ}
\d_t u = \Delta u + (3 C_\delta^{(1)} - 9 C_\delta^{(2)}) u - u^3 + \xi_\delta\;.
\end{equ}
\end{theorem}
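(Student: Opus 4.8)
The plan is to establish the convergence of $\hat{\Psi}_\delta(\xi_\delta)$ in $\MMm^{(d)}$ and then invoke the already-developed solution theory to identify the renormalised PDE. The first step is the heart of the matter and proceeds via moment estimates in Wiener chaos. Each element $\tau \in \CW_\min^{(d)}$ gives rise, through the canonical lift \eref{e:canonical} corrected by the Wick-type renormalisations \eref{e:Wick} and \eref{e:3dWick}, to a random distribution $\hat\Pi^\delta_z \tau$ that lies in a finite (actually, inhomogeneous Wiener) chaos of fixed order. The strategy is to compute $\E\bigl|\scal{\hat\Pi^\delta_z \tau, \phi_z^\lambda}\bigr|^2$ and show it is bounded by $\lambda^{2|\tau|}$ uniformly in $\delta \in [0,1]$, as well as the analogous bound for differences $\hat\Pi^\delta_z \tau - \hat\Pi^{\bar\delta}_z\tau$ with a gain of a positive power of $\delta$, which gives convergence in probability. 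By equivalence of moments in a fixed chaos (hypercontractivity), higher moments follow, and then a Kolmogorov-type argument over the parameters $z, \lambda, \phi$ upgrades this to the suprema appearing in \eref{e:boundsM02}; the extra bounds involving $\one_{t \ge s}$ and the spatial test function $\phi_x^\lambda$ for $\Pi_z\<1>$ are handled by the same technique applied to the explicitly-known (Gaussian) object $\hat\Pi^\delta_z \<1>$.

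The key analytic input for these moment bounds is the behaviour of the kernel $K$ under repeated convolution. One writes $\hat\Pi^\delta_z \tau$ in terms of iterated integrals against $K$ (and its spatial/temporal derivatives, via \eref{e:admissible3}), represents the second moment as a sum of graphs obtained by contracting noise legs, and then estimates each resulting integral using the scaling $K(t,x) \sim |z|^{-d}$ and the fact that $K$ integrates polynomials to zero (recentering). The constants $C_\delta^{(1)}$ and $C_\delta^{(2)}$ are precisely chosen to cancel the divergent diagonal contractions: $C_\delta^{(1)}$ renormalises the contraction of $\<2>$ (which behaves like $(\Pi_z\<1>)^2$ with a logarithmically, resp.\ linearly, divergent expectation), and $C_\delta^{(2)}$ renormalises the new divergent sub-diagram appearing inside $\<22>$ and $\<32>$ in three dimensions. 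The main obstacle is exactly this bookkeeping in $d=3$: one must verify that after subtracting the two constants the remaining graphs are all integrable uniformly in $\delta$, and that the chosen constants have the asymptotics claimed in the Remark; but this is precisely what is carried out in \cite[Sec.~10]{Regularity}, so here one simply quotes it.

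Granting the convergence $\hat\Psi_\delta(\xi_\delta) \to \hat\Pi$ in $\MMm^{(d)}$, the identification of $u = \CR U$ as the classical solution of the stated renormalised equation is a deterministic, $\delta$-by-$\delta$ computation. For fixed smooth $\xi_\delta$ the model $\hat\Psi_\delta(\xi_\delta)$ is \emph{not} the canonical lift $\Psi(\xi_\delta)$, but it differs from it by the action of an element of the renormalisation group: replacing $\Psi(\xi_\delta)$ by $\hat\Psi_\delta(\xi_\delta)$ amounts to acting with a linear map $M_\delta$ that subtracts $C_\delta^{(1)}$ (resp.\ $C_\delta^{(2)}$) on the appropriate symbols. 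One then runs the abstract fixed point \eref{e:FP} with this renormalised model, applies $\CR$, and tracks how the subtractions propagate: the term $-U^3$ in \eref{e:FP}, once expanded, contains the symbol $\<3>$ with coefficient $1$ and the symbol $\<1>$ with a coefficient involving $(\CR U)$; the renormalisation of $\<3>$ contributes $3 C_\delta^{(1)}\, \CR U$ to the right-hand side, and analogously the $d=3$ subtractions on $\<22>,\<32>$ contribute $-9 C_\delta^{(2)}\, \CR U$, which is exactly the claimed shift of the linear term. This algebraic identity is \cite[Sec.~9.2, Thm.~10.x]{Regularity}, and since both sides are continuous in the model, it passes to the limit; combined with Theorem~\ref{theo:gen}, which guarantees that $\CR U$ solves \eref{e:ACint} with the modified constant, this completes the argument.
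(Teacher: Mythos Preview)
Your sketch is correct and aligns with the argument the paper defers to: in the paper itself this theorem is not proved but simply quoted as ``the main result of \cite[Sec.~10]{Regularity}'', and what you have written is an accurate outline of that proof (moment bounds via graph contractions in a fixed Wiener chaos, hypercontractivity plus a Kolmogorov-type criterion to pass to the supremum norms of \eref{e:boundsM02}, and the identification of the renormalised equation via the action of the renormalisation map $M_\delta$ as in \cite[Sec.~9.2]{Regularity}). There is nothing to add on the level of strategy; if anything, your description of how the constants $3C_\delta^{(1)}$ and $-9C_\delta^{(2)}$ emerge from the expansion of $U^3$ is slightly informal, but the precise computation is exactly \cite[Prop.~9.10]{Regularity}.
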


\begin{remark}\label{rem:rem214}
In dimension $2$, we do not need $C_\delta^{(2)}$ (i.e.\ we set it equal to $0$), 
and $C_\delta^{(1)}$ can be chosen  as
\begin{equ}[e:2DWicklog]
C_\delta^{(1)} = {1\over 4\pi} \log \delta^{-1} + c_\rho \;,
\end{equ}
for some constant $c_\rho$ of order $1$ which depends on the mollifier $\rho$. 
The precise expression for $c_\rho$ does not matter at this stage.

In dimension $3$ on the other hand, one should take 
\begin{equ}[e:CWick]
C_\delta^{(1)} = \delta^{-1} \int_{\R^4} \bigl(P\star \rho\bigr)(t,x)^2\,dt\,dx\;,
\end{equ}
where $\star$ denotes space-time convolution as before and $P$ denotes the heat kernel (without
any truncation). 
This time, it is the constant $C_\delta^{(2)}$ that has to be chosen to diverge logarithmically:
\begin{equ}[e:CWick7]
C_\delta^{(2)} = c\log \delta^{-1} + \bar c_\rho\;,
\end{equ}
with $c$ a universal constant independent of $\rho$ and $\bar c_\rho$ some constant that depends on the
mollifier. The precise values for $c$ and $\bar c_\rho$ are irrelevant for our current purpose. 
\end{remark}

\section{Large deviations for Wiener chaos}
\label{s:WienerLDP}
In this section we state and prove a large deviation statement for random variables taking values in a fixed 
inhomogeneous Wiener chaos over an arbitrary Gaussian probability space. These results are then applied in Section~\ref{s:AC-LDP} to obtain a large deviation result for the 
renormalised models constructed from a white noise with small amplitude.

Large deviation results for random variables in a fixed Wiener chaos have been studied by several authors, see e.g. \cite{Borell1, Borell2, Ledoux,Nualart}. For the reader's convenience, 
we give a complete exposition in our context. The core of our argument is a \emph{generalised contraction principle} in the spirit of \cite[Lemma 2.1.4]{DS}.
We start by recalling the following definition from \cite{DS}.

\begin{definition}\label{def:LDP}
Let $(S,d)$ be a separable metric space. We say that a family $\{ \mu_\eps, \eps >0 \}$ of probability measures on $S$ satisfies a large deviation principle with rate $\eps$ and rate function $\II$ if
\begin{enumerate}
\item $\II \colon S \to [0,+\infty]$ is lower semicontinuous, has compact sublevel sets and is not identical to $+\infty$. 
\item For every closed set $\Cc \subseteq S$ we have
\begin{equ}[e:LDPupper0]
 \limsup_{\eps \to 0} \eps \log \mu_\eps \big(\Cc \big) \leq  -\inf_{s \in \Cc} \II(s)\;.
\end{equ}
\item For every open subset $\Oo \subseteq S$  we have 
\begin{equ}[e:LDlower0]
 \liminf_{\eps \to 0} \eps \log \mu_\eps \big( \Oo \big) \geq  -\inf_{s \in \Oo} \II(s)\;.
\end{equ}
\end{enumerate}
We say that a family of $S$-valued random variables  satisfies a large deviation principle if their distributions do. 
\end{definition}
\begin{remark}
All of the large deviation principles in this paper have rate $\eps$ and we will not repeat the rate every time. 
\end{remark}
We also recall the following version of the contraction principle:
\begin{lemma}\label{le:contraction1}
Let $\{\mu_\eps,\eps>0\}$ be a family of probability measures on a separable metric space $(S,d)$ and let $\II\colon S \to [0, \infty]$ satisfy the first condition of Definition~\ref{def:LDP} . Let $(S',d')$ be another separable metric space, and let $\{{\FFF}_\eps, \eps \geq 0\}$ be a family of mappings from $S$ to $S'$ which are continuous on a neighbourhood of $\{s \in S \colon \II(s) < \infty \}$. We assume that 
\begin{enumerate}
\item The probability measures $\{\mu_\eps, \eps >0 \}$ satisfy a large deviation principle on $S$ with rate function $\II$. 
\item For every $c \in \R$, there exists a neighbourhood $\CO_c$ of $\{s \in S\,:\,\II(s) \le c\}$ such that the mappings ${\FFF}_\eps$ converge uniformly on $\CO_c$ to $\FFF_0$.
\end{enumerate}
Then the image measures $\mu_\eps^{-1} \circ {\FFF}_{\eps}^{-1}$ satisfy a large deviation principle on $S'$ with rate function 
\begin{equ}
\II'(s') = \inf\{\II(s) \colon \; s \in S, \; \FFF_0(s) = s' \}\;,    
\end{equ}
with the convention that the $\inf$ equals $+\infty$ if the set is empty.
\end{lemma}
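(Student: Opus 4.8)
The plan is to prove this by a standard "almost continuous" version of the contraction principle, reducing the general case to the fact that a large deviation principle is preserved under continuous maps, while carefully handling that the maps $\FFF_\eps$ depend on $\eps$ and are only continuous near the relevant sublevel sets. First I would note that by condition (2) of Definition~\ref{def:LDP}, since $\II$ has compact sublevel sets $K_c \eqdef \{s \in S\,:\, \II(s) \le c\}$, all the relevant action happens on a $\sigma$-compact subset of $S$; in particular, for the upper bound it suffices (by exhaustion over $c$ and the standard truncation argument for LDP upper bounds) to estimate $\mu_\eps$ on sets that, up to exponentially negligible error, are contained in a fixed neighbourhood $\CO_c$ of $K_c$ on which $\FFF_\eps \to \FFF_0$ uniformly. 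So the two hypotheses combine to say: on the set where $\mu_\eps$ is not super-exponentially small, we may replace $\FFF_\eps$ by $\FFF_0$ at an error that vanishes.

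Next I would verify that $\II'$ is a genuine rate function. Lower semicontinuity and compact sublevel sets of $\II'$ follow from the fact that $\FFF_0$ is continuous on a neighbourhood of $\bigcup_c K_c$: indeed $\{\II' \le c\} = \FFF_0\bigl(K_c \cap \{\text{domain}\}\bigr)$ is the continuous image of a compact set, hence compact, and taking a nested union over a sequence $c_n \uparrow c$ (using that $K_c = \bigcap_{c' > c} K_{c'}$ and continuity) gives closedness of sublevel sets, i.e. lower semicontinuity. It is not identically $+\infty$ because $\II$ is not.

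For the upper bound on a closed set $\Cc' \subseteq S'$: fix $c$, and use the LDP upper bound for $\mu_\eps$ together with the fact that for $\eps$ small, $\FFF_\eps^{-1}(\Cc') \cap \CO_c$ is contained in an arbitrarily small neighbourhood of $\FFF_0^{-1}(\Cc') \cap K_c$ (by uniform convergence on $\CO_c$ and compactness), so that $\limsup \eps\log \mu_\eps(\FFF_\eps^{-1}(\Cc'))$ is bounded by $-\bigl(\inf_{\FFF_0^{-1}(\Cc')} \II \wedge c\bigr) = -(\inf_{\Cc'}\II' \wedge c)$; letting $c \to \infty$ closes the estimate. Here the slight subtlety is that $\FFF_0^{-1}(\Cc')$ need not be closed, but intersecting with the compact $K_c$ and using continuity of $\FFF_0$ on a neighbourhood of $K_c$ makes $\FFF_0^{-1}(\Cc') \cap K_c$ compact, which is all one needs. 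For the lower bound on an open set $\Oo' \subseteq S'$: pick any $s' \in \Oo'$ with $\II'(s') < \infty$ and any $s \in \FFF_0^{-1}(s')$ with $\II(s)$ close to $\II'(s')$; then $s$ lies in some $K_c \subseteq \CO_c$, and by continuity of $\FFF_0$ at $s$ plus uniform convergence $\FFF_\eps \to \FFF_0$ on $\CO_c$, there is an open neighbourhood $\Oo$ of $s$ with $\FFF_\eps(\Oo) \subseteq \Oo'$ for all small $\eps$; the LDP lower bound for $\mu_\eps$ then gives $\liminf \eps \log \mu_\eps(\FFF_\eps^{-1}(\Oo')) \ge -\II(s)$, and optimising over $s$ and $s'$ yields $\ge -\inf_{\Oo'}\II'$.

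The main obstacle — really the only place requiring care — is the bookkeeping around the fact that $\FFF_\eps$ is merely continuous on a \emph{neighbourhood} of $\{\II < \infty\}$ and converges to $\FFF_0$ only \emph{locally uniformly} in the sense of condition (2), rather than globally: one must consistently intersect everything with the compact sublevel sets $K_c$, exploit that $\mu_\eps(S \setminus \CO_c) \to 0$ exponentially faster than $e^{-c/\eps}$ (a consequence of the upper bound applied to the closed set $S \setminus \CO_c$, whose $\II$-infimum is $\ge c$), and only at the end send $c \to \infty$. This is exactly the mechanism of \cite[Lemma~2.1.4]{DS}, and I would phrase the proof so as to isolate this exponential-negligibility estimate as the key lemma, after which both bounds follow from the ordinary continuous contraction principle applied to $\FFF_0$ restricted to $\CO_c$.
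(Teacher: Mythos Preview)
Your proposal is correct and rests on the same key observation as the paper's proof: the set $\{s : d'(\FFF_\eps(s),\FFF_0(s)) > \alpha\}$ is, for $\eps$ small, contained in $S\setminus\CO_c$ and hence has $\mu_\eps$-mass decaying faster than $e^{-c/\eps}$ for every $c$, which is exactly the exponential-equivalence estimate you isolate at the end.

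The difference is one of packaging. The paper does not prove the upper and lower bounds by hand as you do; instead it invokes \cite[Exercise~2.1.20(ii)]{DS} (the contraction principle for $\eps$-dependent maps under an exponential-equivalence hypothesis), verifies that hypothesis via the argument above, and then disposes of the ``only continuous on a neighbourhood of $\{\II<\infty\}$'' issue in one stroke by conditioning $\mu_\eps$ on that open neighbourhood, noting that the conditioned measures have the same large-deviation behaviour. Your route --- checking that $\II'$ has compact sublevel sets, then doing both bounds directly with the $K_c$\,/\,$\CO_c$ truncation --- is more self-contained and makes the mechanism visible, at the cost of some bookkeeping (e.g.\ your claim that $\FFF_\eps^{-1}(\Cc')\cap\CO_c$ lies in a small neighbourhood of $\FFF_0^{-1}(\Cc')\cap K_c$ is not literally what you use; what you actually need and use is the inclusion into $\FFF_0^{-1}(\Cc'_\alpha)$, together with $\mu_\eps(S\setminus\CO_c)\le e^{-c/\eps+o(1)}$). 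The paper's conditioning trick is the cleaner way to handle the partial continuity, and is worth knowing.
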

\begin{proof}
This is a direct consequence of \cite[Exercise 2.1.20 (ii)]{DS} (see also \cite[Lemma 2.1.4]{DS}). 
Indeed, in this exercise it is shown that the conclusion of the theorem holds if the $\FFF_\eps$ are continuous on all of $S$ and if  for all $\alpha >0$ we have
\begin{equ}
\limsup_{\eps \to 0 }\eps \log \mu_\eps \Big(\xi \colon d' \big( \FFF_\eps (\xi), \FFF_0(\xi) \big) > \alpha  \Big) = - \infty\;.
\end{equ}

Furthermore, given any $c \in \R$, the uniform convergence of ${\FFF}_\eps$ on $\CO_c$ implies that 
for every $\alpha > 0$ one can find $\eps_0$ such that for $\eps < \eps_0$ the condition
$d'(\FFF_\eps(s), \FFF_0(s)) > \alpha$ implies $s \not\in \CO_c$. The LDP for $\mu_\eps$ then implies
that $\limsup_{\eps \to 0} \eps \log \mu_\eps(\{s\,:\, d'(\FFF_\eps(s), \FFF_0(s)) > \alpha\}) \le -c$
for arbitrary $c$, so that this limit has to be $-\infty$.

Furthermore, the condition that the $\FFF_\eps$ should be continuous everywhere can be replaced by continuity in an open neighbourhood of $\{s \in S \colon \, \II(s) < \infty \}$  -- indeed, just apply the above reasoning to the measures $\mu_\eps$ conditioned on this open neighbourhood of $\{s \in S \colon \, \II(s) < \infty \}$ on which the functions $\FFF_\eps$ are continuous. These conditioned measures have the same large deviation behaviour as the original measures. 
\end{proof}

Now we proceed to the discussion of large deviations for Banach-valued Wiener chaos. Let $(B, \Hh, \mu)$ be an abstract Wiener space, i.e.\ $\mu$ is a centred Gaussian measure on a separable Banach space $B$ with Cameron-Martin space $\Hh \subset B$. Let $(e_i)_{i=1}^\infty$ be an orthonormal basis of $\Hh$ consisting of elements of the dual space $B^{\star}$, where $B^\star \subset \CH$ is defined by dualising
the inclusion $\CH \subset B$ and identifying $\CH^\star$ with $\CH$. (Such a basis can always be found.) Then for every $e_i$ the linear mapping $\xi \mapsto \langle \xi, e_i \rangle $ is continuous on $B$  and defines a centred Gaussian random variable with $\int \langle \xi,e_i \rangle \, \langle \xi , e_j \rangle \, \mu (d\xi) = \delta_{ij}$.

Recall that for $\xi \in \R$ the Hermite polynomials $H_k(\xi)$ are defined by
\begin{equ}
\exp\Big( \lambda \xi -\frac12 \lambda^2  \Big) = \sum_{k=0}^\infty {\lambda^k \over \sqrt{k!}} H_k(\xi)\;,
\end{equ}
i.e.\ $H_0(\xi)  = 1$, $H_1(\xi) =\xi$, $H_2(\xi) = \frac{1}{\sqrt 2} (\xi^2 -1)$, $H_3(\xi) = \frac{1}{\sqrt 6} (\xi^3 - 3\xi)$ etc.

\begin{remark}
The convention used here is that if $\xi$ is a centred normal random variable with variance $1$,
then the random variables $H_k(\xi)$ are orthonormal in $L^2(\Omega)$.
\end{remark}

We call multi-index any sequence $\alpha = (\alpha_1, \alpha_2, \ldots ) \in \N_0^\N$ with only finitely many non-zero entries. We use the convention  $\alpha! = \prod_{i=1}^\infty \big( \alpha_i !\big) $ and $|\alpha| = \sum_{i=1}^\infty \alpha_i$.

Then for any multi-index $\alpha$  we define the $\R$-valued function
\begin{equ}
H_\alpha (\xi) =  \prod_{i=1}^\infty H_{\alpha_i} \big(  \langle \xi, e_i \rangle \big)\;  \qquad \text{for }\xi \in B.
\end{equ} 
For $k \in \N$ the $k$-th homogeneous $\R$-valued Wiener chaos $\Hh^{(k)}(\mu) = \Hh^{(k)}(\mu,\R)  $ is defined as the closure in $L^2(\mu,\R)$ of the linear space generated by the functions $H_\alpha(\xi)$ with $|\alpha| =k$.

Let $E$ be a real separable Banach space and denote by $L^p(\mu,E)$ ($p \in [1, \infty)$) the space of all $E$-valued measurable functions $\FFF$ on $(B,\mu)$ with $\int \| \FFF(\xi) \|_E^p \, \mu (d \xi) < \infty$. For any $k \in \N$ the $k$-th $E$-valued Wiener chaos $\Hh^{(k)}(\mu,E)$  is defined as the closure in $L^2(\mu,E)$ of the linear space generated by random variables of the form $H_\alpha(\xi) \, y$ 
with $|\alpha| = k$ and $y \in E$. Equivalently, one can set 
\begin{equ}[e:character2]
\Hh^{(k)}(\mu,E)  \eqdef \Big\{ \FFF \in L^2(\mu, E) \colon \int \FFF(\xi) \, H_\alpha(\xi) \, \mu (d \xi) =0 \, \text{for all $|\alpha| \neq k$ }  \Big\}.
\end{equ}
Indeed, it is obvious that each random variable $\FFF(\xi) = \sum_{|\alpha| = k} y_\alpha H_\alpha(\xi)$ with only finitely many 
summands satisfies the condition specified in \eqref{e:character2} and that this condition is stable under convergence in $L^2(\mu,E)$. On the other hand, if $\FFF \in L^2(\mu,E)$ satisfies the condition of \eqref{e:character2}, then the conditional expectation of $\FFF$ with respect to the $\sigma$-algebra $\CF_N \eqdef \sigma(\langle \xi, e_1\rangle, \ldots, \langle \xi, e_N\rangle )$ is given by 
\begin{equ}[e:series]
\Psi_N(\xi) \eqdef \sum_{\substack{|\alpha| =k\\ 
 \alpha_i = 0 ,\, i >N}}  \bigg( \int \FFF(\xi')  \, H_\alpha(\xi')   \mu (d \xi') \bigg) \,  H_\alpha(\xi)  \;,
\end{equ}
and  the  convergence theorem for Banach-valued martingales shows that this series converges to $\FFF$ in $L^2(\mu, E)$ as $N$ goes to infinity. Actually, the strong integrability properties of random variables in a fixed Wiener chaos (see e.g. \cite[Thm. 4.1]{Borell3}) imply the much stronger statement that there exists a constant $C$ and a sequence $\beta_N $ with $\lim \beta_N \to \infty$ such that 
\begin{equ}[e:WienerIntegrability]
\int \exp \Big( \beta_N \| \FFF(\xi) - {\FFF}_N(\xi) \|_E^{\frac{2}{k}}   \Big)  \mu( d \xi)  \leq C\;.
\end{equ}

It is important to note here that even though we are using what looks like an $L^2$ theory, $E$ is \textit{not} assumed to
be a Hilbert space. As a matter of fact, this point is absolutely crucial to our argument.

Fix now a
 \begin{equ}[e:Fsum]
 \FFF(\xi) = \sum_{|\alpha|=k} y_\alpha\, H_\alpha(\xi)
 \end{equ}
in $ \Hh^{(k)}(\mu,E) $. Denote by ${\FFF}_N$ the same sum restricted to those indices $\alpha$ with $\alpha_i =0$ for $i >N$. For any $h \in \Hh$ we can define the \emph{homogeneous part} of $\FFF$ by
\begin{equ}[e:fhom]
{\FFF}_{\hom}(h)= \int \FFF(\xi+h) \, \mu(d\xi) \,.
\end{equ}
The Cameron-Martin theorem implies that for every $h \in \CH$ the random variable $\FFF(\xi +h)$ is well defined $\mu$-almost surely and that the expectation in \eqref{e:fhom} converges. For the truncated sum ${\FFF}_N$ it is easy to check that 
\begin{equ}
\big({\FFF}_N \big)_{\hom}(h) \eqdef  \int {\FFF}_N(\xi+h) \, \mu(d\xi)  = \sum_{\substack{|\alpha| =k\\ 
\alpha_i = 0 , \, i >N}} y_\alpha h^\alpha,
\end{equ}
where $h^\alpha \eqdef \prod_{i=1}^\infty \langle h,e_i \rangle^{\alpha_i}$. On the other hand we have by the Cameron-Martin theorem and Cauchy-Schwarz inequality
\begin{equs}
\big\| \big({\FFF}_N \big)_{\hom}(h)&  - {\FFF}_{\hom}(h)  \big\|_E\\
& =   \Big\| \int \big(  {\FFF}_N (\xi +h) - \FFF(\xi+h)  \big) \, \mu(d\xi) \Big\|_E \label{e:Fhomconvergence}\\
&= \Big\| \int  \exp \Big( \langle h, \xi \rangle - \frac12 | h |_{\CH}^2 \Big)   \big(  {\FFF}_N (\xi ) - \FFF(\xi)\big)\;  \mu(d \xi) \Big\|_E  \\
&\leq \exp\Big(\frac{1}{2}| h |_{\CH}^2 \Big) \Big\|  {\FFF}_N (\xi ) - \FFF(\xi) \Big\|_{L^2(\mu, E)} \;, 
\end{equs}
which implies uniform convergence of the sum 
\begin{equ}
{\FFF}_{\hom}(h) = \sum_{|\alpha| =k} y_\alpha h^\alpha
\end{equ}
on bounded subsets of $\CH$.

From now on, we will consider random variables $\FF = \bigoplus_{\tau \in \CW} {\FFF}_{\tau}$, where $\CW$ is some finite index set and  each component ${\FFF}_\tau$ lies in an $E_\tau$-valued inhomogeneous Wiener chaos of order $K_\tau$. This means that for each $\tau$ we can write
\begin{equ}
{\FFF}_\tau = \sum_{k=0}^{K_\tau} {\FFF}_{\tau,k} \;,
\end{equ}
where ${\FFF}_{\tau,k} \in \CH^{(k)}(\mu, E_\tau)$ and $E_\tau$ is some separable Banach space. Let $(\FF_{\delta}, \delta \in (0,1))$ be a family of  such random variables. We are going to say that $\FF_{\delta}$ converges to $\FF$ if for each $\tau \in \CW$ and $k \leq K_\tau$
\begin{equ}[e:convergenceWiener]
\lim_{\delta \to 0 }\int \big\| {\FFF}_{\delta;\tau,k}(\xi) - {\FFF}_{\tau,k}(\xi) \big\|_{E_\tau}^2  \,\mu (d\xi) =0 \;.
\end{equ}
We also define the homogenous part of $\FF$ as $\FF_{\hom} = \bigoplus_{\tau \in \CW} ({\FFF}_{\tau, K_\tau} )_{\hom}$. Note that 
in this definition there is no contribution corresponding to the  ${\FFF}_{\tau,k}$ for $k < K_\tau$ to $\FF_{\hom}$.
The reason for this is that this respects the scaling used in \eqref{e:scaling}.

The main result of this section is a large deviation statement for the random variables $\FF$ when changing the \emph{noise intensity}. The natural rescaling  of the different coordinates in our context is given by    
\begin{equ}[e:scaling]
\FF^{(\eps)}(\xi) \eqdef \bigoplus_{\tau \in \CW} \, \eps^{\frac{K_{\tau}}{2}} {\FFF}_\tau(\xi) \;.
\end{equ}

The main result of this section states that under the measure $\mu$, the  random variables $\FF^{(\eps)} $ satisfy a large deviation principle that is stable under convergence to $\FF$. For any $\sz = \bigoplus_{\tau \in \CW}s_\tau \in \EE \eqdef\bigoplus_{\tau \in \CW} E_\tau$, the rate function is given by 
\begin{equ}[e:rateWiener]
\II(\sz) = \inf \bigg\{ \frac12 |h|^2_{\CH} \colon h \in \Hh \text{ with }  \FF_{\hom}(h) = \sz  \bigg\}\;,
\end{equ}
where the infimum is interpreted as $+\infty$ if the set is empty.
\begin{theorem}[Large deviations in Wiener chaos]\label{thm:wienerLDP}
Assume that $\FF_\delta$ converges to $\FF$ as $\delta$ tends to $0$ in the sense of \eqref{e:convergenceWiener}.  Choose $\delta = \delta(\eps) \ge 0$ such that $\lim_{\eps \downarrow 0} \delta(\eps) = 0$. Then the random variables $\FF_{\delta(\eps)}^{(\eps)}(\xi)$ satisfy a large deviation principle on $\EE$ with rate $\eps$ and rate function $\II$ given by \eqref{e:rateWiener}.
\end{theorem}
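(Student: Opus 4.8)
The plan is to realise each $\FF^{(\eps)}$ as the push-forward of a rescaled Gaussian measure under an explicit map, to apply the generalised contraction principle of Lemma~\ref{le:contraction1} at the level of finite-dimensional truncations, and then to pass to the limit; the strong integrability property \eqref{e:WienerIntegrability} of Wiener chaos is the engine throughout, being used to reduce to $\delta\equiv 0$, to compare $\FF^{(\eps)}$ with its truncations, and to obtain exponential tightness. First I would reduce to the LDP for the unperturbed family $\FF^{(\eps)}$, since $\FF^{(\eps)}_{\delta(\eps)}$ and $\FF^{(\eps)}$ are exponentially equivalent (and this is trivial when $\delta(\eps)=0$). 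Indeed, $\FF^{(\eps)}_{\delta(\eps)} - \FF^{(\eps)} = \bigoplus_\tau \eps^{K_\tau/2}\sum_{k\le K_\tau}\bigl(\FFF_{\delta(\eps);\tau,k} - \FFF_{\tau,k}\bigr)$, each summand lies in the $k$-th Wiener chaos and, by \eqref{e:convergenceWiener} and $\delta(\eps)\to 0$, has $L^2(\mu)$-norm $\sigma(\eps)\to 0$. The Borell hypercontractivity inequality (the homogeneous counterpart of \eqref{e:WienerIntegrability}, see \cite[Thm.~4.1]{Borell3}) then gives, for every fixed $\alpha>0$ and $K=\max_\tau K_\tau$, a bound $\mu\bigl(\|\FF^{(\eps)}_{\delta(\eps)} - \FF^{(\eps)}\|_\EE>\alpha\bigr)\le C\exp\bigl(-c\,(\alpha/\sigma(\eps))^{2/K}\eps^{-1}\bigr)$, so that $\limsup_{\eps\to 0}\eps\log\mu(\cdots) = -\infty$.

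For $N\in\N$, let $\FF_N = \bigoplus_\tau \FFF_{N;\tau}$ be obtained by keeping in each $\FFF_{\tau,k}$ only the indices $\alpha$ with $\alpha_i=0$ for $i>N$; each $\FFF_{N;\tau}$ is a genuine polynomial in the finitely many coordinates $\langle\,\cdot\,,e_1\rangle,\dots,\langle\,\cdot\,,e_N\rangle$. Define $G^N_\eps\colon B\to\EE$ by $G^N_\eps(\xi) = \bigoplus_\tau \eps^{K_\tau/2}\FFF_{N;\tau}(\xi/\sqrt\eps)$, so that under $\mu$ the law of $\FF^{(\eps)}_N$ equals the push-forward of the law $\mu_\eps$ of $\sqrt\eps\,\xi$ by $G^N_\eps$. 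Since $\{\mu_\eps\}$ satisfies a large deviation principle on $B$ with good rate function $h\mapsto\frac12|h|^2_\CH$ (Schilder's theorem; see \cite{DS}), I would apply Lemma~\ref{le:contraction1}. The maps $G^N_\eps$ are continuous on all of $B$; expanding the Hermite polynomials $H_\alpha$ into their homogeneous components one finds that $G^N_\eps - (\FF_N)_\hom$ is a finite sum of terms $\eps^{\theta}\,Q(\langle\,\cdot\,,e_1\rangle,\dots,\langle\,\cdot\,,e_N\rangle)$ with $\theta>0$ and $Q$ a polynomial, which tends to $0$ uniformly on bounded subsets of $B$ as $\eps\to 0$. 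In particular $G^N_\eps\to(\FF_N)_\hom$ uniformly on a neighbourhood of every (compact) sublevel set $\{h\in\CH\colon\frac12|h|^2_\CH\le c\}$, and Lemma~\ref{le:contraction1} yields that $\FF^{(\eps)}_N$ satisfies a large deviation principle with good rate function $\II_N(\sz) = \inf\{\frac12|h|^2_\CH\colon (\FF_N)_\hom(h)=\sz\}$.

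Next I would let $N\to\infty$. Rescaling \eqref{e:WienerIntegrability} exactly as in the reduction step shows that, for every $\alpha>0$, $\limsup_{\eps\to 0}\eps\log\mu\bigl(\|\FF^{(\eps)}-\FF^{(\eps)}_N\|_\EE>\alpha\bigr)\le -\beta_N\alpha^{2/K}\to -\infty$ as $N\to\infty$, i.e.\ $\{\FF^{(\eps)}_N\}_N$ are exponentially good approximations of $\{\FF^{(\eps)}\}$. The same estimate, applied to $\FF^{(\eps)}$ itself and to the differences $\FF^{(\eps)}-\FF^{(\eps)}_{N_j}$ along a rapidly increasing sequence $N_j$, together with the fact that each $\FF_{N_j}$ takes values in a fixed finite-dimensional subspace $V_j\subset\EE$, shows that the sets $\{\|\sz\|_\EE\le R\}\cap\bigcap_j\{\dist(\sz,V_j)\le r_j\}$ (with $r_j\downarrow 0$ chosen compatibly with the $\beta_{N_j}$) exhaust $\EE$ by compacts and absorb $\FF^{(\eps)}$ up to exponentially small probability; hence $\{\FF^{(\eps)}\}$ is exponentially tight. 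By the standard passage from exponentially good approximations plus exponential tightness to a full LDP (cf.\ \cite{DS}), $\FF^{(\eps)}$ satisfies a large deviation principle with rate function $\II(\sz) = \sup_{\alpha>0}\liminf_{N\to\infty}\inf_{\|\sz'-\sz\|_\EE<\alpha}\II_N(\sz')$.

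It then remains to identify this rate function with \eqref{e:rateWiener}, and I expect this, together with the exponential tightness just discussed, to be the delicate part: exponential tightness is where the \emph{Banach}-valued nature of the $E_\tau$ genuinely forces the use of \eqref{e:WienerIntegrability} (closed balls of $\EE$ being non-compact), and the rate-function identification requires a weak-compactness argument for the nonlinear map $\FF_\hom$. The inequality ``$\le$'' in $\II(\sz)=\inf\{\frac12|h|^2_\CH\colon\FF_\hom(h)=\sz\}$ is immediate from the pointwise convergence $(\FF_N)_\hom(h)\to\FF_\hom(h)$. For the reverse inequality, given a finite value $c$ of the right-hand side of the display for $\II$ one extracts, diagonalising over $\alpha\downarrow 0$, a sequence $h_k$ with $|h_k|^2_\CH\le 2(c+\eta)$, indices $N_k\uparrow\infty$, and $(\FF_{N_k})_\hom(h_k)\to\sz$ in $\EE$; a weakly convergent subsequence $h_k\rightharpoonup h^*$ satisfies $\frac12|h^*|^2_\CH\le c+\eta$. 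Using that $(\FF_N)_\hom\to\FF_\hom$ uniformly on bounded subsets of $\CH$ (the uniform convergence established earlier in this section) and that each $h\mapsto\FF_\hom(h)$ is continuous from $\CH$ with the weak topology into $\EE$ with its weak topology — which follows from $\sum_\alpha\|y^{(\tau)}_\alpha\|^2_{E_\tau}<\infty$ together with $\sum_{|\alpha|=k}(h^\alpha)^2\le |h|^{2k}_\CH$, so that $h\mapsto(h^\alpha)_\alpha$ is bounded and weak-to-weak continuous from $\CH$ into $\ell^2$ — one gets $(\FF_{N_k})_\hom(h_k)\rightharpoonup\FF_\hom(h^*)$; since this sequence also converges strongly to $\sz$, necessarily $\FF_\hom(h^*)=\sz$, whence $\II(\sz)\le\frac12|h^*|^2_\CH\le c+\eta$, and letting $\eta\downarrow 0$ finishes the identification.
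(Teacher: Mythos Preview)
Your overall architecture matches the paper's: finite-dimensional truncation, Schilder plus the generalised contraction principle for $\FF_N^{(\eps)}$, and exponential approximation via Borell's integrability to pass to the limit. The paper organises the same ingredients slightly differently. Rather than first reducing to $\delta\equiv 0$ and then truncating, it proves the LDP directly for $\FF_{N;\delta(\eps)}^{(\eps)}$ (its Lemma~\ref{le:LDPFN}), using that the coefficients $y_{\alpha;\delta;\tau,k}$ converge to $y_{\alpha;\tau,k}$; and rather than proving exponential tightness separately and identifying the rate function by a hands-on weak-compactness argument, it invokes \cite[Lemma~2.1.4]{DS}, which packages the $\Gamma$-type convergence $\lim_{\lambda\to 0}\liminf_N\inf_{\CC_\lambda}\II_N=\inf_\CC\II$ and the goodness of $\II$ together (this is the paper's Lemma~\ref{le:compact}). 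Your route is a little longer but perfectly viable; the paper's is more economical because the DS lemma absorbs both the tightness and the identification in one stroke.

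There is, however, one genuine gap in your rate-function identification. You assert $\sum_{|\alpha|=K_\tau}\|y^{(\tau)}_\alpha\|_{E_\tau}^2<\infty$ in order to realise $\FF_\hom$ as a bounded linear map $\ell^2\to E_\tau$ post-composed with the weakly continuous $h\mapsto(h^\alpha)_\alpha$. That identity holds when $E_\tau$ is a Hilbert space, by orthonormality of the $H_\alpha$, but in a general separable Banach space it is \emph{not} a consequence of $\FFF_{\tau,K_\tau}\in L^2(\mu,E_\tau)$; it is essentially a cotype~2 statement and fails, for instance, in $c_0$-type target spaces. Fortunately you already have the right tool in hand: the uniform convergence $(\FF_N)_\hom\to\FF_\hom$ on bounded subsets of $\CH$ (your quoted estimate \eqref{e:Fhomconvergence}) shows that $\FF_\hom$ is a uniform limit of polynomials in finitely many coordinate functionals $\langle\,\cdot\,,e_i\rangle$, each of which is weak-to-strong continuous on bounded sets. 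Hence $\FF_\hom$ itself is weak-to-\emph{strong} continuous on bounded sets of $\CH$, and the passage $(\FF_{N_k})_\hom(h_k)\to\FF_\hom(h^*)$ follows directly, without any appeal to $\sum_\alpha\|y_\alpha\|^2$. Replacing your justification of weak-to-weak continuity by this observation closes the gap and, incidentally, also gives you compact sublevel sets of $\II$ (as the continuous image of weakly compact balls), which is how the paper gets goodness via \cite[Lemma~2.1.4]{DS}.
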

\begin{remark}\label{rem:scaling}
The reason that the lower order contributions ${\FFF}_{\tau,k}$ for $k< K_\tau$ vanish on the level of the large deviation principle is the scaling \eqref{e:scaling}. In order to have a contribution from ${\FFF}_{\tau,k}$ one would have to consider $\eps^{\frac{k}{2}} {\FFF}_{\tau,k}$ instead of  $\eps^\frac{{K_\tau}}{2} {\FFF}_{\tau,k}$.
\end{remark}

As stated above, our argument follows roughly the strategy employed in \cite{DS} to prove the classical Freidlin-Wentzell estimates (see \cite[Sec. 1.4 and Lemma 2.1.4]{DS}). 

Suppose that for every $\delta \geq 0$, and every $\tau \in \CW$ and $k \leq K_\tau$ we have 
\begin{equ}[e:series2]
{\FFF}_{\delta;\tau,k}(\xi) = \sum_{|\alpha| = k}  y_{\alpha; \delta; \tau,k} \, H_\alpha(\xi)\;.
\end{equ}
Denote as before by $\FF_{N;\delta} (\xi)$ the conditional expectation of  $\FF_{\delta}(\xi)$ with respect to $\CF_N$, i.e.\ the series expansion \eref{e:series2}  of each ${\FFF}_{\delta; \tau,k }(\xi)$ is restricted to indices $\alpha$ with $\alpha_i=0$ for $i >N$.

\begin{lemma}\label{le:LDPFN}
For any fixed $N \in \N$ the random variables $\FF_{N,\delta(\eps)}^{(\eps)}(\xi)$ satisfy a large deviation principle with rate function 
\begin{equ}[e:defIIn]
\II_N(\sz) = \inf \bigg\{ \frac12 |h|_{\CH}^2 \colon h \in \Hh \text{ with }  (\FF_N)_{\hom}(h) = \sz  \bigg\}\;.
\end{equ}

\end{lemma}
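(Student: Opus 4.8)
The plan is to reduce the statement to a finite-dimensional Schilder theorem and to conclude via the generalised contraction principle of Lemma~\ref{le:contraction1}. By construction $\FF_{N,\delta}$ is the conditional expectation of $\FF_\delta$ with respect to $\CF_N = \sigma(\scal{\xi,e_1},\dots,\scal{\xi,e_N})$, so $\FF_{N,\delta(\eps)}^{(\eps)}(\xi)$ is a polynomial function of the standard Gaussian vector $G \eqdef (\scal{\xi,e_1},\dots,\scal{\xi,e_N}) \in \R^N$; writing $\FF_{N,\delta(\eps)}^{(\eps)}$ also for the corresponding polynomial map $\R^N \to \EE$, I set $\Phi_\eps(z) \eqdef \FF_{N,\delta(\eps)}^{(\eps)}(z/\sqrt\eps)$, so that $\FF_{N,\delta(\eps)}^{(\eps)}(\xi) = \Phi_\eps(\sqrt\eps\, G)$ and each $\Phi_\eps \colon \R^N \to \EE$ is continuous.

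First I would recall that the laws $\mu_\eps$ of $\sqrt\eps\, G$ satisfy a large deviation principle on $\R^N$ with good rate function $J_N(z) = \tfrac12 |z|^2$ (classical; see \cite{DS}). The core of the argument is then to show that $\Phi_\eps \to \Phi_0$ uniformly on bounded subsets of $\R^N$, where $\Phi_0(z) \eqdef (\FF_N)_{\hom}(\sum_{i\le N} z_i e_i)$. Fixing $\tau$ and expanding the $\tau$-component $\eps^{K_\tau/2}\FFF_{\delta(\eps);\tau,k}(z/\sqrt\eps)$ into monomials in $z$, every term is $\eps^{(K_\tau-|\beta|)/2}$ times a coefficient built from some $y_{\alpha;\delta(\eps);\tau,k}$ times $z^\beta$, with $|\beta| \le k \le K_\tau$; the exponent of $\eps$ is nonnegative and vanishes exactly when $k = K_\tau$ and $\beta = \alpha$ is the top multi-index. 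Hence, as $\eps \to 0$, the chaos components with $k < K_\tau$ disappear and the top component collapses onto its part homogeneous of degree $K_\tau$; by the Hermite generating-function computation already used above to compute $(\FFF_N)_{\hom}$, this limit is precisely $\Phi_0$. The coefficients converge because they are recovered as $y_{\alpha;\delta;\tau,k} = \int \FFF_{\delta;\tau,k}(\xi) H_\alpha(\xi)\,\mu(d\xi)$ (orthonormality of the scalar Hermite system), so Cauchy-Schwarz and $\|H_\alpha\|_{L^2(\mu)} = 1$ give $\|y_{\alpha;\delta;\tau,k} - y_{\alpha;\tau,k}\|_{E_\tau} \le \|\FFF_{\delta;\tau,k} - \FFF_{\tau,k}\|_{L^2(\mu,E_\tau)} \to 0$ by \eqref{e:convergenceWiener}. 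As only finitely many $\tau$ and, for each, finitely many multi-indices are involved, all of this is uniform on bounded subsets of $\R^N$.

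I would then apply Lemma~\ref{le:contraction1} with $S = \R^N$, $\mu_\eps$ as above, $\II = J_N$, $S' = \EE$ and $\FFF_\eps = \Phi_\eps$. Every sublevel set $\{J_N \le c\}$ is a bounded ball, so hypothesis~(2) of that lemma follows from the uniform convergence just established and hypothesis~(1) is the finite-dimensional Schilder theorem; thus $\FF_{N,\delta(\eps)}^{(\eps)}(\xi)$ satisfies a large deviation principle on $\EE$ with rate function $\sz \mapsto \inf\{\tfrac12 |z|^2 \colon z \in \R^N,\ (\FF_N)_{\hom}(\sum_{i\le N} z_i e_i) = \sz\}$. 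Finally I would identify this with $\II_N$ of \eqref{e:defIIn}: since $(\FF_N)_{\hom}(h)$ depends only on $\scal{h,e_1},\dots,\scal{h,e_N}$ whereas $|h|_\CH^2 \ge \sum_{i\le N}\scal{h,e_i}^2$, with equality iff $h$ lies in the span of $\{e_1,\dots,e_N\}$, the infimum defining $\II_N$ is attained over that span and coincides with the infimum above.

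The step I expect to be the main obstacle is the second paragraph: tracking the powers of $\eps$ produced when the Hermite polynomials are rescaled by $1/\sqrt\eps$, so as to see that exactly the top-degree homogeneous part of the highest-order chaos survives in the limit, and feeding in the coefficient-wise convergence coming from \eqref{e:convergenceWiener}. The finite-dimensional Schilder theorem, the application of the contraction principle, and the final identification of the rate function are routine.
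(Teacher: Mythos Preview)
Your proof is correct and follows essentially the same approach as the paper's: rewrite the rescaled truncated chaos as $\Phi_\eps(\sqrt\eps\cdot)$, show via the Hermite rescaling that $\Phi_\eps$ converges uniformly on bounded sets to the homogeneous part (using the Cauchy--Schwarz argument for coefficient convergence), and conclude by the generalised contraction principle. The only cosmetic difference is that the paper applies Schilder directly on the abstract Wiener space $B$ (so the rate function $\tfrac12|h|_\CH^2$ over $h\in\CH$ appears immediately), whereas you first project to $\R^N$ and then identify the two infima at the end; your extra identification step is correct and the two routes are equivalent.
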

\begin{proof}
As stated above, the random functions ${\FFF}_{N;\delta; \tau,k}$ are continuous functions $B \to E_\tau$  for every fixed $N$. 

Note that for every $\tau \in \CW$ and $k \leq K_\tau$ and for every multi-index $\alpha$ with $|\alpha|=k$ we have by Cauchy-Schwarz inequality
\begin{equs}
\|  y_{\alpha;\delta;\tau,k} - y_{\alpha;\tau,k} \|_{E_\tau}^2 &= \Big\| \int    \big( {\FFF}_{\delta;\tau,k}(\xi) - {\FFF}_{\tau,k}(\xi) \big) H_\alpha(\xi)   \,   \mu(d \xi) \Big\|_{E_\tau}^2 \\
&\leq  \int \big\| {\FFF}_{\delta;\tau,k}(\xi) - {\FFF}_{\tau,k}(\xi) \big\|_{E_\tau}^2  \,\mu (d\xi) \;.\label{e:yconverge}
\end{equs}
This converges to zero as $\delta \to 0$ by assumption. 

In order to apply the contraction principle, Lemma \ref{le:contraction1}, it is useful to rewrite the rescaled random variables as
\begin{equ}
\eps^{\frac{K_{\tau}}{2}} {\FFF}_{N;\delta(\eps);\tau,k}(\xi)  = \Phi^{(\eps)}_{N; \tau,k}(\eps^{\frac12} \xi)\;,
\end{equ}
where 
\begin{equ}
\Phi^{(\eps)}_{N;\tau,k} (\xi) \eqdef \eps^{\frac{K_\tau -k}{2}}\sum_{\substack{|\alpha| =k\\ \alpha_i =0,\, i > N}}  y_{\alpha;\delta(\eps);\tau,k} \; \Phi_{\alpha}^{(\eps)} (  \xi)\;,
\end{equ}
and 
\begin{equ}
\Phi_{\alpha}^{(\eps)}(\xi) =   \prod_{i=1}^\infty \eps^{\frac{\alpha_i}{2}} H_{\alpha_i}   \big(\eps^{-\frac12}  \langle \xi, e_i \rangle \big) \;.  
\end{equ}
Now it is easy to check, using \eqref{e:yconverge} that for $k < K_\tau$ the functions $\Phi^{(\eps)}_{N;\tau,k} $ converge to $0$ and the functions $\Phi^{(\eps)}_{N;\tau,K_\tau} $ converge to $({\FFF}_{N;\tau,K_\tau})_{\hom}$, in both cases uniformly on bounded subsets of $B$. This implies in particular that the slightly weaker condition (2) in Lemma~\ref{le:contraction1} holds.

Furthermore, it is well known (see e.g. \cite[Chapter 4]{LedouxIso}) that under $\mu$, the random vectors $\sqrt{\eps}  \xi$ satisfy a large deviation principle on $B$ with rate $\eps$  and rate function 
\begin{equ}
\II_{\xi}(h) = 
\begin{cases}
\frac12 | h|_{\CH}^2 \qquad &\text{for $h \in \CH$,}\\
+\infty \qquad &\text{else.} 
\end{cases}
\end{equ}
Hence, the Lemma follows from the contraction principle, Lemma \ref{le:contraction1}. 
\end{proof}

\begin{lemma}\label{le:compact} 
Let $\II$ be given by \eqref{e:rateWiener} and let $\II_N$ be given by \eqref{e:defIIn}. Then $\II \colon \EE \to [0,\infty]$ is lower semicontinuous with compact sublevel sets. Furthermore for any closed set $\CC \subseteq \EE$ we have
\begin{equ}[e:RateFunctionLimit]
\lim_{\lambda \to 0}  \liminf_{N \to \infty} \inf_{\sz \in \CC_\lambda}  \II_N(\sz)   = \inf_{\sz  \in \CC}  \II(\sz)  \;.
\end{equ} 
Here $\CC_\lambda \eqdef \{ \sz \in \EE \colon  \dist_{\EE} (\sz, \CC) \leq \lambda \}$.
\end{lemma}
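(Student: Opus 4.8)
The plan is to establish the three assertions of Lemma~\ref{le:compact} more or less in the order stated, using the uniform-convergence estimates for homogeneous parts established in \eqref{e:Fhomconvergence} together with the classical Wiener-space large-deviation rate function $\II_\xi(h) = \frac12|h|_\CH^2$.

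\textbf{Lower semicontinuity and compact sublevel sets of $\II$.} First I would show that if $h_n \to h$ weakly in $\CH$, then $\FF_{\hom}(h_n) \to \FF_{\hom}(h)$ strongly in $\EE$. Each component $({\FFF}_{\tau,K_\tau})_{\hom}(h) = \sum_{|\alpha|=K_\tau} y_\alpha h^\alpha$ is a bounded $K_\tau$-homogeneous polynomial on $\CH$, and the uniform convergence of its truncations (proven in the display ending \eqref{e:Fhomconvergence}) shows it is continuous with respect to the weak topology on bounded sets: a finite-degree monomial $h \mapsto \prod \langle h, e_i\rangle^{\alpha_i}$ is weakly continuous, so the uniform limit over a bounded weakly convergent sequence is weakly continuous too. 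Now let $c \ge 0$ and take $\sz_n \in \EE$ with $\II(\sz_n) \le c$, $\sz_n \to \sz$. Pick near-minimisers $h_n$ with $\FF_{\hom}(h_n) = \sz_n$ and $\frac12|h_n|_\CH^2 \le c + 1/n$. The sequence $(h_n)$ is bounded in $\CH$, so a subsequence converges weakly to some $h$ with $\frac12|h|_\CH^2 \le \liminf \frac12|h_n|_\CH^2 \le c$ (weak lower semicontinuity of the norm), and by the weak continuity just established $\FF_{\hom}(h) = \lim \FF_{\hom}(h_n) = \sz$. Hence $\II(\sz) \le c$, giving both lower semicontinuity and closedness of sublevel sets; the same argument with $c$ fixed and $\sz_n$ arbitrary in the sublevel set shows the sublevel set is sequentially compact (every sequence has a subsequence whose images converge, to a point of the set), hence compact since $\EE$ is metric.

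\textbf{The limit \eqref{e:RateFunctionLimit}.} Write $L \eqdef \lim_{\lambda\to0}\liminf_{N\to\infty}\inf_{\sz\in\CC_\lambda}\II_N(\sz)$, which exists since $\inf_{\CC_\lambda}$ is monotone in $\lambda$. For the inequality $L \le \inf_\CC \II$: given $\sz \in \CC$ with $\II(\sz) < \infty$ and any $\eta > 0$, choose $h$ with $\FF_{\hom}(h) = \sz$ and $\frac12|h|_\CH^2 \le \II(\sz) + \eta$; since $(\FF_N)_{\hom}(h) \to \FF_{\hom}(h) = \sz$ as $N \to \infty$ (this is the uniform convergence of truncations again, now applied to $h$ itself, not over a family), for large $N$ we have $(\FF_N)_{\hom}(h) \in \CC_\lambda$, hence $\inf_{\CC_\lambda}\II_N \le \frac12|h|_\CH^2 \le \II(\sz)+\eta$; letting $N\to\infty$ then $\lambda\to0$ then $\eta\to0$ and taking the infimum over $\sz\in\CC$ gives $L \le \inf_\CC\II$. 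The reverse inequality $L \ge \inf_\CC\II$ is the substantive half: for each $\lambda$ and each $N$ pick $h_{N,\lambda}$ and $\sz_{N,\lambda} \in \CC_\lambda$ with $(\FF_N)_{\hom}(h_{N,\lambda}) = \sz_{N,\lambda}$ and $\frac12|h_{N,\lambda}|_\CH^2 \le \inf_{\CC_\lambda}\II_N + \lambda$. Along a suitable sequence $(N_j, \lambda_j)$ with $\lambda_j\to0$, $N_j\to\infty$ realising the $\liminf$/$\lim$, the $h_{N_j,\lambda_j}$ are bounded in $\CH$ (their norms are controlled by $L+1$), so pass to a weakly convergent subsequence $h_{N_j,\lambda_j} \rightharpoonup h$. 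Then $\frac12|h|_\CH^2 \le L$, and one must show $\FF_{\hom}(h) \in \CC$, which since $\CC$ is closed reduces to $\sz_{N_j,\lambda_j} \to \FF_{\hom}(h)$. Here one splits $\sz_{N_j,\lambda_j} - \FF_{\hom}(h) = \big((\FF_{N_j})_{\hom}(h_{N_j,\lambda_j}) - \FF_{\hom}(h_{N_j,\lambda_j})\big) + \big(\FF_{\hom}(h_{N_j,\lambda_j}) - \FF_{\hom}(h)\big)$: the first bracket goes to $0$ by the uniform (in $N$) convergence of truncations on the bounded set $\{|h|_\CH \le L+1\}$ established after \eqref{e:Fhomconvergence}, and the second goes to $0$ by the weak continuity of $\FF_{\hom}$ on bounded sets proved above. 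Hence $\FF_{\hom}(h) = \lim \sz_{N_j,\lambda_j} \in \CC$, so $\inf_\CC \II \le \frac12|h|_\CH^2 \le L$.

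\textbf{Main obstacle.} The delicate point is the reverse inequality in \eqref{e:RateFunctionLimit}: one needs that the uniform convergence $(\FF_N)_{\hom} \to \FF_{\hom}$ is uniform \emph{in $N$} on bounded subsets of $\CH$ (so that evaluating at the moving argument $h_{N_j,\lambda_j}$ is legitimate), and simultaneously the weak-to-strong continuity of $\FF_{\hom}$ to handle the second bracket. Both ingredients are available — the first from the estimate culminating in the line $\le \exp(\frac12|h|_\CH^2)\|{\FFF}_N - \FFF\|_{L^2(\mu,E)}$ after \eqref{e:Fhomconvergence}, the second from the structure of $\FF_{\hom}$ as a uniform limit of weakly continuous finite-degree polynomials — but combining them correctly along the diagonal sequence $(N_j,\lambda_j)$, and checking the boundedness of the $h_{N_j,\lambda_j}$ is genuine (i.e. $L < \infty$, which follows by using the $\le \inf_\CC\II$ direction together with the standing assumption that $\II \not\equiv +\infty$ when $\CC$ meets the effective domain, and is trivially true otherwise), is where the care is needed.
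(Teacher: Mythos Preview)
Your proof is correct and follows essentially the same route as the paper's. The paper's own argument is extremely terse: it observes that \eqref{e:Fhomconvergence} verifies the hypotheses of \cite[Lemma~2.1.4]{DS} (the abstract ``extended contraction principle'' in Deuschel--Stroock) and then simply invokes that lemma; what you have done is to unpack and reprove the relevant part of that lemma directly in this setting. Both arguments rest on exactly the same two ingredients you isolate --- the uniform convergence $(\FF_N)_{\hom}\to\FF_{\hom}$ on norm-bounded subsets of $\CH$ coming from \eqref{e:Fhomconvergence}, and the weak-to-strong continuity of $\FF_{\hom}$ on bounded sets (obtained as a uniform limit of the weakly continuous finite-dimensional polynomials $(\FF_N)_{\hom}$) --- so there is no genuine methodological difference, only a difference in level of detail.
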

\begin{proof}[Proof of Lemma \ref{le:compact}]
As a consequence of \eqref{e:Fhomconvergence}, the assumptions of
\cite[Lemma 2.1.4]{DS} are satisfied, from which the first statement immediately follows.
The estimate \eqref{e:RateFunctionLimit} then follows from
the first part of the proof of \cite[Lemma 2.1.4]{DS}.
\end{proof}
\begin{lemma}[Exponential equivalence]\label{le:expontialequivalent}
We have for every $\lambda>0$
\begin{equ}
 \limsup_{N \to \infty }\limsup_{\eps \to 0 }\eps \log \mu\big( \xi \colon \| \FF^{(\eps)}_{\delta(\eps)}(\xi)  - \FF_{N;\delta(\eps)}^{(\eps)}(\xi) \|_{\EE} \geq \lambda \big) = - \infty \;.
\end{equ}

\end{lemma}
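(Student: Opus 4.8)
The plan is to reduce the statement to a single-chaos, single-component estimate and then apply the strong Gaussian hypercontractive integrability bound \eqref{e:WienerIntegrability} together with a Chebyshev/Markov argument. First I would note that $\FF^{(\eps)}_{\delta(\eps)} - \FF^{(\eps)}_{N;\delta(\eps)} = \bigoplus_{\tau \in \CW}\sum_{k=0}^{K_\tau}\eps^{K_\tau/2}\bigl({\FFF}_{\delta(\eps);\tau,k} - {\FFF}_{N;\delta(\eps);\tau,k}\bigr)$, where the inner difference is precisely the ``tail'' of the chaos expansion \eqref{e:series2} over multi-indices $\alpha$ with some $\alpha_i \ne 0$ for $i > N$. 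Since $\CW$ is finite and each $K_\tau$ is finite, it suffices by a union bound to prove the corresponding statement for each fixed pair $(\tau,k)$: that is, with $G_{N;\delta}^{(\eps)}(\xi) \eqdef \eps^{K_\tau/2}\bigl({\FFF}_{\delta;\tau,k}(\xi) - {\FFF}_{N;\delta;\tau,k}(\xi)\bigr)$, one has
\begin{equ}
\limsup_{N \to \infty}\limsup_{\eps \to 0}\eps \log \mu\bigl(\xi\colon \|G_{N;\delta(\eps)}^{(\eps)}(\xi)\|_{E_\tau} \ge \lambda\bigr) = -\infty\;.
\end{equ}
Here it is convenient to rescale as in the proof of Lemma~\ref{le:LDPFN}: writing $\eps^{K_\tau/2}{\FFF}_{\delta(\eps);\tau,k}(\xi) = \Phi^{(\eps)}_{\tau,k}(\eps^{1/2}\xi)$ with $\Phi^{(\eps)}_{\tau,k}(\xi) = \eps^{(K_\tau-k)/2}\sum_{|\alpha|=k} y_{\alpha;\delta(\eps);\tau,k}\Phi^{(\eps)}_\alpha(\xi)$, the event in question becomes an event about $\sqrt\eps\,\xi$.

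The key analytic input is \eqref{e:WienerIntegrability}: there exist a constant $C$ and a sequence $\beta_N \to \infty$ with
\begin{equ}
\int \exp\Bigl(\beta_N\,\|{\FFF}_{\tau,k}(\xi) - {\FFF}_{N;\tau,k}(\xi)\|_{E_\tau}^{2/k}\Bigr)\,\mu(d\xi) \le C\;,
\end{equ}
and, crucially, a $\delta$-uniform version of this (since ${\FFF}_{\delta;\tau,k}$ converges to ${\FFF}_{\tau,k}$ in $L^2(\mu,E_\tau)$ by \eqref{e:convergenceWiener}, the family $\{{\FFF}_{\delta;\tau,k}\}$ lies in a bounded set of the chaos, and hypercontractivity gives a uniform bound; alternatively one splits ${\FFF}_{\delta;\tau,k} - {\FFF}_{N;\delta;\tau,k} = ({\FFF}_{\tau,k} - {\FFF}_{N;\tau,k}) + \text{(small $L^2$ error uniformly controlled by hypercontractivity)}$). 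Now I would use the scaling behaviour of Hermite polynomials: since ${\FFF}_{\delta;\tau,k} - {\FFF}_{N;\delta;\tau,k}$ lies in the $k$-th homogeneous chaos, its value at $\sqrt\eps\,\xi$ scales like $\eps^{k/2}$ times its value at $\xi$ (this is exactly the identity $\Phi^{(\eps)}_\alpha(\xi) = \prod_i \eps^{\alpha_i/2}H_{\alpha_i}(\eps^{-1/2}\langle\xi,e_i\rangle)$ used before, evaluated at argument $\xi$ rather than $\eps^{-1/2}\xi$ — more precisely, $\eps^{-k/2}$ times the $k$-homogeneous chaos element evaluated at $\sqrt\eps\xi$ equals the same element evaluated at $\xi$). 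Hence
\begin{equ}
\|G^{(\eps)}_{N;\delta(\eps)}(\xi)\|_{E_\tau} = \eps^{K_\tau/2}\,\eps^{-k/2}\cdot\eps^{k/2}\,\|(\cdots)(\xi)\|_{E_\tau} \le \eps^{(K_\tau)/2}\cdot\bigl(\text{chaos element at }\xi\bigr)\;,
\end{equ}
so that after the rescaling the event $\{\|G^{(\eps)}_{N;\delta(\eps)}(\xi)\|_{E_\tau}\ge\lambda\}$ becomes $\{\|{\FFF}_{\delta(\eps);\tau,k}(\eta) - {\FFF}_{N;\delta(\eps);\tau,k}(\eta)\|_{E_\tau}\ge \lambda\,\eps^{-K_\tau/2}\cdot\eps^{k/2}\} = \{\|\cdots\|_{E_\tau}\ge \lambda\,\eps^{(k-K_\tau)/2}\}$ under $\eta = \sqrt\eps\,\xi$ — but $\eta$ has law $\mu$ composed with dilation, i.e. we are estimating $\mu$-probabilities of $\{\|{\FFF}_{\delta(\eps);\tau,k}(\xi) - {\FFF}_{N;\delta(\eps);\tau,k}(\xi)\|_{E_\tau}\ge \lambda\}$ but under the $\sqrt\eps$-dilated measure, which is where the LDP-type rate $\eps$ enters naturally.

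I expect the bookkeeping of exponents in this rescaling to be the main place to be careful (it is easy to slip a factor), but it is routine. The substantive step is the Chebyshev bound: applying Markov's inequality with the exponential moment, for the rescaled variable we get, for every $\beta > 0$ in the admissible range,
\begin{equ}
\mu\bigl(\|G^{(\eps)}_{N;\delta(\eps)}(\xi)\|_{E_\tau}\ge\lambda\bigr)\le \exp\Bigl(-\beta\,(\lambda/\eps^{?})^{2/k}\cdot\eps^{?}\Bigr)\cdot C\;,
\end{equ}
and tracking the powers of $\eps$ one finds that $\eps\log$ of this quantity is bounded above by $-c\,\beta_N\,\lambda^{2/k}$ for a constant $c>0$ depending only on $K_\tau, k$, uniformly in $\eps$ small; letting $\eps \to 0$ and then $N \to \infty$ (so $\beta_N \to \infty$) drives this to $-\infty$. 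Finally, taking the maximum over the finitely many pairs $(\tau,k)$ and noting that the $k = K_\tau$ terms vanish identically in the difference (so only $k < K_\tau$ genuinely contribute, which is consistent with the scaling gain) completes the argument. The hardest conceptual point is ensuring the $\delta$-uniformity of the hypercontractive bound; this follows because $L^2(\mu,E_\tau)$-convergence of ${\FFF}_{\delta;\tau,k}$ confines the family to a fixed bounded ball of the chaos, on which the exponential-integrability constant in \eqref{e:WienerIntegrability} is uniform.
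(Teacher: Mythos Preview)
Your overall strategy---reduce to one component, invoke the Borell-type exponential integrability bound \eqref{e:WienerIntegrability}, then apply Chebyshev---is exactly the paper's approach, and your handling of the $\delta$-uniformity (splitting ${\FFF}_{\delta;\tau} - {\FFF}_{N;\delta;\tau}$ via ${\FFF}_{\tau}$ and ${\FFF}_{N;\tau}$ and using that conditional expectation contracts in $L^2$) is precisely what the paper does. However, two of your intermediate claims are wrong and would break the argument as written.

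First, the assertion that an element of the $k$-th homogeneous chaos satisfies $\FFF(\sqrt\eps\,\xi)=\eps^{k/2}\FFF(\xi)$ is false: Hermite polynomials are not homogeneous (e.g.\ $H_2(\sqrt\eps\,x)=(\eps x^2-1)/\sqrt 2\neq\eps H_2(x)$), so no such identity holds. Fortunately this rescaling detour is unnecessary. By definition \eqref{e:scaling} one has $\FF^{(\eps)}_{\delta}(\xi)=\bigoplus_\tau\eps^{K_\tau/2}\FFF_{\delta;\tau}(\xi)$ with no dilation of $\xi$, so the event $\{\eps^{K_\tau/2}\|{\FFF}_{\delta;\tau}-{\FFF}_{N;\delta;\tau}\|_{E_\tau}\ge\lambda\}$ equals $\{\|{\FFF}_{\delta;\tau}-{\FFF}_{N;\delta;\tau}\|_{E_\tau}^{2/K_\tau}\ge\lambda^{2/K_\tau}\eps^{-1}\}$, and Chebyshev with the exponential moment gives directly
\[
\mu\bigl(\eps^{K_\tau/2}\|{\FFF}_{\delta(\eps);\tau}-{\FFF}_{N;\delta(\eps);\tau}\|_{E_\tau}\ge\lambda\bigr)\le C\exp\bigl(-\beta_N\lambda^{2/K_\tau}\eps^{-1}\bigr)\;,
\]
from which the conclusion follows. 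This is exactly the paper's computation; note that one uses the exponent $2/K_\tau$ for the whole inhomogeneous chaos of order $\le K_\tau$, so there is no need to split into individual $k$.

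Second, your claim that ``the $k=K_\tau$ terms vanish identically in the difference'' is false. The subscript $N$ in ${\FFF}_{N;\delta;\tau,k}$ denotes the truncation to multi-indices supported on the first $N$ coordinates (i.e.\ the conditional expectation on $\CF_N$), not a truncation in chaos degree; the difference ${\FFF}_{\delta;\tau,K_\tau}-{\FFF}_{N;\delta;\tau,K_\tau}$ is nonzero and is in fact the dominant term, since for $k<K_\tau$ the extra factor $\eps^{(K_\tau-k)/2}$ only makes things smaller. You may be conflating $\FF_N$ with the homogeneous part $\FF_{\hom}$; they are different objects.
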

\begin{proof}
We write
\begin{equs}
 \int \|{\FFF}_{\delta(\eps); \tau}(\xi)  - {\FFF}_{N;\delta(\eps);\tau}(\xi) \|_{E_\tau}^2 &\mu (d \xi)  
 \leq 3 \int \| {\FFF}_{\delta(\eps); \tau}(\xi)  - {\FFF}_{\tau}(\xi) \|_{E_\tau}^2 \mu(d \xi)  \\
&  + 3 \int  \| {\FFF}_\tau(\xi)  - {\FFF}_{N;\tau}(\xi) \|_{E_\tau}^2 \mu(d \xi)  \\
&  + 3\int \| {\FFF}_{N;\tau}(\xi) - {\FFF}_{N;\delta(\eps),\tau}(\xi) \|_{E_\tau}^2  \mu( d \xi)\;.
\end{equs}
The assumption \eqref{e:convergenceWiener} on convergence of the $\FF_\delta$ to $\FF$ implies that the first term on the right hand side goes to zero as $\eps \to 0$. The conditional expectation is a contraction on $L^2(\mu,E_\tau)$, and hence the third term goes to zero as well. In particular, for fixed $N$ we can find an $\eps_N$ such that for $\eps < \eps_N$ the right hand side can be bounded by $4 \int  \| {\FFF}_\tau(\xi)  - {\FFF}_{N;\tau}(\xi) \|_{E_\tau}^2 \mu(d \xi)$. As a consequence of the integrability properties of random variables
belonging to a fixed Wiener chaos \cite[Theorem 4.1]{Borell3}, we conclude from this 
that there exists a sequence  $\beta_N$ increasing to $\infty$ as $N \to \infty$  and a constant $0< C < \infty$ 
such that for every $N$ and for every $\eps < \eps_N$ we have
\begin{equ}
\int \exp\Big( \beta_N \big\| {\FFF}_{\delta(\eps); \tau}(\xi)  - {\FFF}_{N;\delta(\eps);\tau}(\xi)\big\|_{E_\tau}^{\frac{2}{K_\tau}}\Big) \mu(d \xi)   \; \leq C \;.
\end{equ} 
Using Chebyshev's inequality we get for every $N$ and every $0<\eps < \eps_N$ that
\begin{equs}
\mu\Big(\xi& \colon \eps^\frac{{K_\tau}}{2} \big\|    {\FFF}_{\delta(\eps),\tau}(\xi)  - {\FFF}_{N;\delta(\eps);\tau} (\xi)   \big\|_{E_\tau} \geq \lambda  \Big) \\
&= \mu \bigg(\xi \colon  \exp\Big(   \beta_N \big\|    {\FFF}_{\delta(\eps),\tau}(\xi)  - {\FFF}_{N;\delta(\eps);\tau} (\xi)   \big\|_{E_\tau}^{\frac{2}{K_\tau}}  \Big) \geq  \exp\Big(\beta_N  \lambda^{\frac{2}{K_\tau}} \eps^{-1} \Big) \bigg) \\
&\leq   C \exp\Big(- \beta_N  \lambda^{\frac{2}{K_\tau}} \eps^{-1} \Big)\;.
\end{equs}
So the result follows.
\end{proof}

With Lemma ~\ref{le:LDPFN}, Lemma~\ref{le:compact}, and Lemma~\ref{le:expontialequivalent} in hand Theorem~\ref{thm:wienerLDP} follows easily. 
\begin{proof}[Proof of Theorem~\ref{thm:wienerLDP}]
It was already shown in Lemma~\ref{le:compact} that the rate function $\II$ satisfies the first condition in Definition~\ref{def:LDP}, so it remains to establish the upper bound \eqref{e:LDPupper0} and the lower bound \eqref{e:LDlower0}. 

Let $\CC \subset \EE$ be closed. Then we have for every $\lambda>0$ and $N \in \N$ that 
\begin{equs}
 \mu\big( \xi \colon  \FF^{(\eps)}_{\delta(\eps)}(\xi) \in \CC \big) &\leq \mu\big( \xi \colon  \FF^{(\eps)}_{N;\delta(\eps)}(\xi) \in \CC_\lambda \big) \\
&\qquad + \mu\big( \xi \colon \| \FF^{(\eps)}_{\delta(\eps)}(\xi)  - \FF_{N;\delta(\eps)}^{(\eps)}(\xi) \|_{\EE} \geq \lambda \big)\;,
\end{equs}
where as above $\CC_\lambda\eqdef \{ \mathbf{x} \colon \dist_{\EE}(\mathbf{x},\CC) \leq \lambda   \}$. According to Lemma~\ref{le:expontialequivalent} for any fixed $\lambda>0$ we can choose $N$ large enough (depending on $\CC$) to ensure
\begin{equ}
\limsup_{\eps \to 0}\eps \log  \mu\big( \xi \colon \| \FF^{(\eps)}_{\delta(\eps)}(\xi)  - \FF_{N;\delta(\eps)}^{(\eps)}(\xi) \|_{\EE} \geq \lambda \big)  \leq - \inf_{\sz \in \CC} \II(\sz)\;.
\end{equ}
On the other hand $\CC_\lambda$ is closed, so that Lemma~\ref{le:LDPFN} implies that for any fixed $N$
 \begin{equ}
\limsup_{\eps \to 0}\eps \log   \mu\big( \xi \colon  \FF^{(\eps)}_{N;\delta(\eps)}(\xi) \in \CC_\lambda \big) \leq - \inf_{\sz \in \CC_\lambda} \II_N(\sz)\;. 
 \end{equ}
Letting first $N$ go to $\infty$ and then $\lambda $ to $0$ and applying Lemma~\ref{le:compact} finishes the proof of the upper bound \eqref{e:LDPupper0}.

In a similar way, let $\CO \subset \EE$  be open and $\sz \in \CO$ and $\lambda >0$ be such that  $\{ \mathbf{x} \in \EE \colon \| \mathbf{x}  - \sz\|_{\EE} \leq 2 \lambda \} \subseteq \CO$. Then we have 
\begin{equs}
 \mu\big( \xi \colon  \FF^{(\eps)}_{\delta(\eps)}(\xi) \in \CO \big) &\geq \mu\big( \xi \colon  \| \FF^{(\eps)}_{N;\delta(\eps)}(\xi)  - \sz \|_{\EE} < \lambda \big) \\
&\qquad - \mu\big( \xi \colon \| \FF^{(\eps)}_{\delta(\eps)}(\xi)  - \FF_{N;\delta(\eps)}^{(\eps)}(\xi) \|_{\EE} \geq \lambda \big)\;.
\end{equs}
As above, Lemma~\ref{le:expontialequivalent} implies that for fixed $\lambda>0$ and $N$ large enough (depending on $\sz$) we have
\begin{equ}
\limsup_{\eps \to 0}\eps \log  \mu\big( \xi \colon \| \FF^{(\eps)}_{\delta(\eps)}(\xi)  - \FF_{N;\delta(\eps)}^{(\eps)}(\xi) \|_{\EE} \geq \lambda \big)  \leq -  \II(\sz) -1\;,
\end{equ}
and we get from Lemma~\ref{le:LDPFN} that for any $N$
\begin{equ}
 \liminf_{\eps \to 0}\eps \log  \mu\big( \xi \colon  \| \FF^{(\eps)}_{N;\delta(\eps)}(\xi)  - \sz \|_{\EE} < \lambda \big)  \geq - \inf_{\{\| \mathbf{x} - \sz \| \leq \frac12 \lambda \}}  \II_N(\mathbf{x})\;.
\end{equ}
Letting again $N$ go to $\infty$ first and then $\lambda $ to zero, we obtain the required lower bound from Lemma~\ref{le:compact}. 
\end{proof}

\section{Large deviations for stochastic PDEs}
\label{s:AC-LDP}

In this section we apply our abstract large deviation result, Theorem~\ref{thm:wienerLDP}, to the specific setting of the models
arising in the solution theory for the stochastic Allen-Cahn equation. 
We start by recalling that  $\CW_-^{(3)} = \{\Xi, \<1>,\<2>,\<3>,\<22>,\<31>,\<32>\}$.
We set $\EE \eqdef \bigoplus_{\tau \in \CW_-^{(3)}} E_\tau$, where  for $\tau \in \CW_-^{(3)} \setminus \{ \Xi,\;\<1>\}$ the spaces $E_\tau$ are given by the closure of the 
set of smooth functions $(z,\bar z) \mapsto \bigl(\Pi_z \tau\bigr)(\bar z)$
in the topology given by the norms
\begin{equ}
\|\Pi \tau\| = \sup_{\phi, \lambda, z} \lambda^{-|\tau|} \bigl|\bigl(\Pi_z \tau\bigr)(\phi_{z}^\lambda)\bigr|\;, 
\end{equ}
where the supremum runs over $\lambda \in (0,1]$, $z \in [-1,T+1] \times \T^d$,
and $\phi$ runs over a suitable set of test functions as in \eref{e:distPi}. For $\Xi$ and $\<1>$ the topology is given by the norms listed in \eqref{e:boundsM02}.
Since these norms are weaker than the supremum norm over a compact set, it follows from the
Stone-Weierstrass theorem that smooth functions can be approximated by polynomials with rational coefficients, so that the spaces $E_\tau$ are separable.

Note that $\MM_-^{(d)}$ can be viewed as a (nonlinear and rather complicated) 
closed subset of $\EE$. In the case $d = 2$ one can simplify the situation considerably
by restricting oneself to $\tau \in \{\Xi, \<1>,\<2>,\<3>\}$ and by dropping the
$z$-dependence of the functions $\Pi \tau$. For the remainder of this section, we will only consider
the case $d = 3$, the case $d=2$ follows \textit{mutatis mutandis}.

Let $\CH$  be $L^2( [-2,T+2] \times \T^3 )$. The Gaussian measure $\mu$ with Cameron-Martin space $\CH$ can be realised on $B$, the closure of the space of smooth functions in the Besov space $\Cc^{-\frac{5}{2}- \kappa}$ for any $\kappa >0$, and the triple 
$(B,\CH,\mu)$ is an abstract Wiener space. 
Let $\rho$ be a mollifying kernel, i.e.\  $\rho \colon \R \times \R^3 \to \R$ is a smooth function with $\int \rho(z) \, dz =1$ and compact support contained in the unit ball of $\R \times \R^3$. Then for $\xi \in B$ and  for $\delta \in (0,1)$ set 
\begin{equ}[e:xidelta]
\xi_\delta(z) = \langle\xi, \rho_z^\delta \rangle\;,
\end{equ}
where $\rho_z^\delta$ is defined as above in \eqref{e:rescale}.
Note that under $\mu$ the random distribution $\xi$ is a realisation of space-time white noise and $\xi_\delta$ is a smooth approximation to $\xi$. 

\begin{remark}
One could also have considered different approximations, like for example regularisations in space only or in time only. These
could also be handled in the same way, but we restrict ourselves to space-time regularisations in order to be able to reuse
the convergence results obtained in \cite{Regularity}.
\end{remark}

Let $\Psi(\xi_\delta) = (\Pi^{\xi_\delta}, F^{\xi_\delta}) $ be the canonical model constructed from $\Pi^{\xi_\delta}_{y} \Xi = \xi_\delta$ as described above in \eref{e:admissible3} and \eqref{e:canonical} (we have added a superscript $\xi_\delta$ to make more explicit the dependence on the specific realisation of the noise). As discussed in Section~\ref{ss:models} and Section~\ref{ss:continuity} $(\Pi^{\xi_\delta}, F^{\xi_\delta})$ lives in $\MM^{(d)}$ and it is uniquely characterised by the \emph{minimal model} i.e. the $\Pi^{\xi_\delta}\tau$ for $\tau \in \CW_-^{(3)}$.

We show now that the mapping $\FF_\delta$ that maps the white noise $\xi$ to the minimal model fits exactly into the framework developed in Section \ref{s:WienerLDP}. For any $\delta >0$ set  $\FF_{\delta} = \bigoplus_{\tau \in \CW_-^{(3)}}\FFF_{\delta; \tau}$  where  $\FFF_{\delta;\tau}(\xi) \eqdef \Pi^{\xi_\delta} \tau $ as above. By construction, for a fixed realisation of the noise $\xi$, the distributions $\phi \mapsto \Pi^{\xi_\delta}_{z} \tau (\phi)$ can be identified with a smooth function in the two variables $z$ and $\bar{z}$. In particular, it also takes values in $E_\tau$. 

It follows from the analysis in \cite[Sec. 10]{Regularity} that for each $\tau \in \CW^{(3)}_-$ and for $\delta >0$ the mapping $\xi \mapsto \FFF_{\delta;\tau}(\xi)$ belongs to the $E_\tau$-valued inhomogenous Wiener chaos of order $K_\tau$, where $K_\tau$ is the number of occurrences of the symbol $\Xi$ in $\tau$. More precisely, we have
\begin{equ}
K_{\Xi} = K_{\<1s>} =1\;, \quad K_{\<2s>}=2\;, \quad K_{\<3s>}=3 \;, \quad  K_{\<22s>} =   K_{\<31s>} = 4 \;,\quad K_{\<32s>} = 5\;.
\end{equ}
 
 We briefly outline this analysis.  Let $\CH^{\otimes_{\mathbf{s}}k}$ denote the $k$-fold symmetric tensor power of $\CH$. We identify  $\CH^{\otimes_{\mathbf{s}}k}$ with the space of symmetric square integrable functions $\CW$ in $k$ arguments  $z_i \in [-2,T+2] \times \T^3$. Then it is well known (see e.g. \cite[Sec 1.1]{Nualartbook}) that for every $k \geq 1$ there is an isometry (up to a factor $\sqrt{k!}$)
 \begin{equ}
 I_k \colon \CH^{\otimes_{\mathbf{s}}k }\to \CH^{(k)}(\mu, \R)\;
 \end{equ}
given by iterated stochastic integrals: For all $\xi$ in a set of $\mu$ measure one we have 
\begin{equ}
 I_k(\CW)(\xi)  = \int \!\! \cdots \!\! \int \CW (z_1, \ldots, z_k )  \;\xi(dz_1) \ldots \xi(dz_k) \;.
\end{equ}
We can thus identify every random variable in $\CH^{(k)}(\mu,\R)$ with a symmetric kernel in $k$ arguments. We can also apply $I_k$ to arbitrary kernels in  $L^2 ([-2, T+2] \times \T^3)^{\otimes k}$ by precomposing with the symmetrisation map
 \begin{equ}[e:symmetrisation]
 \CW^{(k)}(z_1, z_2, \ldots, z_k) \mapsto \frac{1}{k!}\sum_{\sigma } \CW^{(k)}(z_{\sigma(1)}, \ldots, z_{\sigma(k)})\;,
 \end{equ} 
 where the sum runs over all elements $\sigma$ in the group of permutations of $\{1, \ldots, k\}$. The symmetrisation \eqref{e:symmetrisation} is a contraction on $L^2 ([-2, T+2] \times \T^3)^{\otimes k}$. So we obtain
 \begin{equ}[e:L2iso]
 \int  \big( I_k(\CW^{(k)}) (\xi ) \big)^2 \, \mu(d\xi)  \leq k ! \int \CW^{(k)}(z_1, z_2, \ldots ,z_k)^2 \,dz_1 \ldots \, dz_k \,.
 \end{equ}
 
It is shown recursively in \cite[Sec. 10]{Regularity}, that for any $\tau \in \CW^{(3)}_-$ and for $\delta >0$,  
$\Pi^{\xi_\delta}_{y}(\tau)$  can be characterised by kernels $\CW_{\delta;\tau,k }(\bar{y}, y, \cdot) \in \CH^{\otimes k}$ for $k = K_{\tau}, K_{\tau}-2, K_{\tau}-4, \ldots$. The precise form of the kernels can be read off directly from the graphs  $\<1>, \<2>, \<3>$ etc. in \eref{e:trees1} above (in fact, this is the reason for this notation).

We illustrate this construction for $\tau =\<32>$, which is the most involved term. All other
symbols can be dealt with in a very similar (but easier) way. The kernel $ \CW_{\delta, \<32s> ,5} (\bar{y},y, \cdot)$ for the contribution in the highest Wiener chaos $K_{\<32s>} = 5$ is given by
  \begin{equs}\label{e:Kernel32}
 \CW_{\delta, \<32s> ,5} (\bar{y},y; z_1, \ldots, z_5)\eqdef& \int dx  \, K_\delta(z_1 - x) \, K_\delta(z_2 -x) \, K_\delta (z_3 -x)\\
 & \big( K(x - \bar{y}) - K(x- y) \big) \, K_\delta(z_4 -y) \, K_\delta(z_5 -y) \;.
 \end{equs}
Here  $K$ denotes the kernel introduced in Section~\ref{ss:models} and $K_\delta \eqdef K \star \rho_\delta$. The connection between \eqref{e:Kernel32} and the graphical representation $\<32>$ is the following: Each vertex of $\<32>$ corresponds to a variable in $[-2,T+2] \times \T^3$- the leafs are $z_1, \ldots , z_5$, the root corresponds to $\bar{y}$ and $y$ and the central vertex corresponds to $x$. Then as explained in  \eqref{e:trees1} every down facing line in $\<32>$ corresponds to one occurrence of a kernel $K$ or $K_\delta$ in \eqref{e:Kernel32}: If the line is connected to a leaf, the kernel is $K_\delta$ and the kernel is $K$ otherwise. The variable $x$ corresponding to a vertex that are neither a leaf nor the root is integrated out.  The fact that we have to subtract one occurrence of the kernel in $\big( K(x - \bar{y}) - K(x- y) \big)$  corresponds to the second term in \eqref{e:admissible3} which has to be added to guarantee for the right behaviour near the diagonal $\bar{y}=y$ (see \cite[Sec. 6 and Sec. 10]{Regularity} for details).

 If $h \in \CH$ is an $L^2$-function then it is easy to check from the recursive construction in  \eref{e:admissible3} and \eqref{e:canonical} that for every test function $\phi$ the iterated integral 
 \begin{equ}[e:Whomogen]
 \int   \langle \CW_{\delta, \<32s> ,5} (\cdot,y, z_1, \ldots z_5) , \phi \rangle  \, h(z_1) \ldots h(z_5) \, dz_1 \ldots \, dz_5
 \end{equ}
 yields the value $\Pi^{h_\delta}_{y}   \<32> (\phi)$ for the canonical model constructed from $\Pi^{h_\delta}_{y} \Xi = h_\delta \eqdef h \star \rho_\delta$. Here we have set
\begin{equ}[e:explanationbracket]
\langle\CW_{\delta, \<32s> ,5} (\cdot,y, z_1, \ldots z_5), \phi  \big\rangle \eqdef \int \CW_{\delta, \<32s> ,5} (\bar{y},y, z_1, \ldots z_5)\, \phi(\bar{y})   \,d\bar{y}\;.
\end{equ}

 In the case of the regularised white noise $\xi_\delta$ the relationship between the kernels and the canonical model is slightly more complicated. The iterated stochastic integral
 \begin{equ}[e:ItIt4]
 I_5(\langle \CW_{\delta, \<32s> ,5} (\cdot,y, \ldots)  , \phi \rangle   )   = \int \langle   \CW_{\delta, \<32s> ,5}  (\cdot,y, z_1, \ldots z_5), \phi \rangle  \, \xi(d z_1) \ldots \xi(d z_5) 
 \end{equ}
does not give the value $\Pi^{\xi_\delta}_{y}   \<32> (\phi)$ for the canonical model contracted from $\xi_\delta$. Indeed, iterated integrals obey an \emph{It\^o-type} chain rule that produces some extra terms. These terms can be represented by additional kernels $\CW_{\delta, \<32s> ,3} (\bar{y},y, z_1, z_2, z_3)$ and  $\CW_{\delta, \<32s> ,1} (\bar{y},y, z_1)$. These kernels are obtained through suitable \emph{contractions} of the graph $\<32>$. Their precise form is not relevant for our discussion and we refer to \cite[Sec.~10.5]{Regularity} and \cite{Nualartbook} for more detailed explanations.  Once these kernels are constructed, the $\Pi_{y}^{\xi_\delta} \<32>$ is given as
\begin{equ}[e:itint]
 \Pi_{y}^{\xi_\delta}\<32>(\phi)\eqdef  \sum_{k \in\{ 5,3, 1   \}}  \,I_k\big(    \langle \CW_{\delta;\<32s>,k }(\cdot, y, \cdots), \phi \rangle \big) \;. 
\end{equ}
As above in \eqref{e:explanationbracket} the pointed bracket on the right hand side denotes integration of the kernels $\CW_{\delta;\tau,k }$ against $\phi$ in the first variable $\bar{y}$, whereas $I_k$ represents the iterated stochastic integration with respect to the remaining variables $z_i$.

The decomposition \eref{e:itint} is useful for several reasons. On one hand, it gives a natural decomposition of $\Pi^{\xi_\delta}\<32>$ into its components in homogenous Wiener chaoses. 
Furthermore, \eqref{e:itint}  in conjunction with the estimate \eqref{e:L2iso} reduces the problem of estimating the second moment  of the expression $ \Pi_{y}^{\xi_\delta}\<32> (\phi) $ to deriving bounds on the $L^2$-norm of the kernels $\CW_{\delta;\<32s>,k }$. Using the equivalence of all moments in a fixed Wiener chaos, these bounds in turn can then be used as input into a \emph{Kolmogorov-like theorem} (see \cite[Thm 10.7]{Regularity}) to obtain bounds on  $\int\|\Pi^{\xi_\delta} \tau(\xi) \|_{E_{\tau}}^p \mu(d\xi)$ for arbitrary $1<p<\infty$.
 
On the other hand, we can immediately read off from \eqref{e:Whomogen} and \eqref{e:itint} the homogeneous part of $\FFF_{\delta;\<32s>}$. According to the discussion in Section~\ref{s:WienerLDP}, the terms corresponding to the kernels $\CW_{\<32s>, k}$ for $k = 1,3$ do not influence the value of the $(\FFF_{\delta;\<32s>})_{\hom}$. For $k = K_{\<32s>} =5$ the construction of the iterated integrals (see \cite[Section 1.1]{Nualartbook}) yields an explicit expression of the quantity $\FFF_{\delta; \<32s>,5}$ evaluated at a shifted noise $\xi+h$ for $h \in \CH$. For any $h \in \CH$ and every test function $\phi$ we get for $\mu$-almost every $\xi$ 
\begin{equs}\label{e:iintplush}
\FFF_{\delta; \<32s>,5}(\xi+h)[y, \phi]   = \int  \big\langle& \CW_{\delta, \<32s> ,5} (\cdot, y, z_1, \ldots z_5), \phi  \big\rangle \,\\ 
&\big(\xi(d z_1) + h(z_1) dz_1 \big)  \cdots \big(\xi(d z_5) + h(z_5) \, dz_5 \big)\;.
\end{equs}
Here we use square brackets to denote the evaluation of $\FFF_{\delta; \<32s>,5}(\xi+h) \in E_{\<32s>}$ at $y$ and at a test function $\phi$.
According to  \eqref{e:fhom} $(\FFF_{\delta;\<32s>})_{\hom}(h)$ evaluated at $(y, \phi)$ is given by the expectation of this expression.  But all of the iterated integrals that involve at least one power of $\xi$ have vanishing expectation. The only term with non-zero expectation with respect to $\mu$ is given by 
\begin{equ}
 \int   \langle \CW_{\delta, \<32s> ,5} (\cdot,y, z_1, \ldots z_5) , \phi \rangle  \, h(z_1) \ldots h(z_5) \, dz_1 \ldots \, dz_5\;.
\end{equ}
According to the discussion around \eqref{e:Whomogen} this is precisely $\Pi_y^{h_\delta}\<32>(\phi)$ for the canonical model  constructed from $h_\delta$. 
 
The reasoning for all the other $\tau \in \CW$ is similar -- we refer to \cite[Sec. 10]{Regularity} for the precise expressions of the kernels $\CW_{\delta; \tau, k}$. In each case the $\FFF_{\delta;\tau,k}$ live in $\CH^{(k)}(\mu, E_{\tau})$ and in each case the homogenous part  
$(\FFF_{\delta;\tau})_{\hom}(h)$ is given by the canonical model constructed from $h_\delta$.  
 
 Finally,  the recursive definition  \eref{e:admissible3} and \eqref{e:canonical} is homogenous in the input $\xi$. Replacing the building block $\xi$ by $\sqrt{\eps} \xi$ produces $\Pi^{\sqrt{\eps} \xi_\delta} \tau =  \eps^{\frac{K_{\tau}}{2}} \Pi^{ \xi_\delta} \tau$ almost surely. This is precisely the rescaling assumed in the abstract setting in  \eqref{e:scaling}. Hence, we are in the situation described in Section~\ref{s:WienerLDP} and we can apply Theorem~\ref{thm:wienerLDP} directly to the functions $\FF^{(\eps)}_\delta$ for fixed $\delta$ to obtain the following large deviation result. 
 
\begin{theorem}\label{thm:trivialLDP}
For any $\eps, \delta>0$ let $ (\Pi^{\sqrt{\eps}\xi_\delta}, F^{\sqrt{\eps} \xi_\delta} ) \in \MM^{(3)}$ be the canonical model defined recursively via \eref{e:admissible3} and \eqref{e:canonical} from 
$\Pi^{\sqrt{\eps}\xi_\delta}_{y} \Xi = \sqrt{\eps}\xi_\delta$. In the same way, for $h \in \CH$ let $ (\Pi^{h_\delta},F^{h_\delta})$ be the canonical model constructed from $h_\delta = h \star \rho_\delta$.  

Then for any fixed value of $\delta >0$  the distributions of $\{(\Pi^{\sqrt{\eps}\xi_\delta}, F^{\sqrt{\eps} \xi_\delta} ), \eps > 0 \}$ under $\mu$ satisfy a large deviation principle on $\MM^{(3)}$ with rate $\eps$ and rate function
\begin{equ}
\II_{\mathrm{Model\, }\delta}\big( (\Pi ,F) \big) \eqdef \inf \Big\{ \tfrac12 | h |_{\CH}^2 \colon \, (\Pi^{h_\delta},F^{h_\delta}) = (\Pi,F) \Big\} \;.
\end{equ}
\end{theorem}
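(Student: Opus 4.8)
The plan is to deduce Theorem~\ref{thm:trivialLDP} from the abstract Wiener-chaos principle Theorem~\ref{thm:wienerLDP} together with the continuity of the extension map Theorem~\ref{theo:extension}, since the structural work has already been done in the discussion preceding the statement. Recall that for each $\tau\in\CW_-^{(3)}$ the component $\FFF_{\delta;\tau}\colon\xi\mapsto\Pi^{\xi_\delta}\tau$ lies in the $E_\tau$-valued inhomogeneous Wiener chaos of order $K_\tau$, that the homogeneity of the construction gives $\Pi^{\sqrt{\eps}\xi_\delta}\tau=\eps^{K_\tau/2}\Pi^{\xi_\delta}\tau$, which is exactly the scaling \eqref{e:scaling}, and that for every $h\in\CH$ the homogeneous part of $\FF_\delta=\bigoplus_\tau\FFF_{\delta;\tau}$ is the minimal canonical model $\Pi^{h_\delta}$ built from the smooth periodic function $h_\delta=h\star\rho_\delta$.

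First I would apply Theorem~\ref{thm:wienerLDP} with $\delta$ frozen. That theorem is stated for a family $\FF_{\delta'}\to\FF$ with $\delta'(\eps)\to0$, so I take the \emph{constant} family $\FF_{\delta'}:=\FF_\delta$ for all $\delta'\in(0,1)$, which trivially converges to $\FF:=\FF_\delta$ in the sense of \eqref{e:convergenceWiener}, together with any $\delta'(\eps)\to0$. Since $(\FF_{\delta'})^{(\eps)}_{\delta'(\eps)}=\FF^{(\eps)}_\delta=\Pi^{\sqrt{\eps}\xi_\delta}\!\restriction_{\CW_-^{(3)}}$, Theorem~\ref{thm:wienerLDP} yields an LDP on $\EE$ for the laws of the minimal model of $\sqrt{\eps}\xi_\delta$, with rate function $\sz\mapsto\inf\bigl\{\tfrac12|h|_\CH^2:\Pi^{h_\delta}=\sz\bigr\}$. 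For $h\in\CH$ the function $h_\delta$ is smooth and periodic, so $\Pi^{h_\delta}$ is the minimal part of the genuine admissible model $\Psi(h_\delta)$ and hence lies in $\MMm^{(3)}$; thus this rate function equals $+\infty$ off $\MMm^{(3)}$. Since moreover $\FF^{(\eps)}_\delta\in\MMm^{(3)}$ $\mu$-almost surely (being the minimal part of the canonical model of the smooth noise $\sqrt{\eps}\xi_\delta$) and $\MMm^{(3)}$ is a closed subset of $\EE$, the LDP restricts — by the standard fact for large deviation principles on a closed set carrying the full mass and the support of the rate function — to an LDP on the separable metric space $\MMm^{(3)}$.

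Next I would push this forward along the extension map. By Theorem~\ref{theo:extension} there is a locally Lipschitz, hence continuous, map $\Phi\colon\MMm^{(3)}\to\MM^{(3)}$ sending $\PPi$ to the unique admissible model agreeing with $\PPi$ on $\CW_-^{(3)}$, and by construction $\Phi\bigl(\Pi^{\sqrt{\eps}\xi_\delta}\!\restriction_{\CW_-^{(3)}}\bigr)=(\Pi^{\sqrt{\eps}\xi_\delta},F^{\sqrt{\eps}\xi_\delta})$. Applying the contraction principle, Lemma~\ref{le:contraction1}, with every $\FFF_\eps$ equal to this single map $\Phi$ (condition (2) there is then vacuous and $\Phi$ is continuous on all of $\MMm^{(3)}$), gives an LDP on $\MM^{(3)}$ for the laws of $(\Pi^{\sqrt{\eps}\xi_\delta},F^{\sqrt{\eps}\xi_\delta})$ with rate function
\begin{equ}
(\Pi,F)\ \mapsto\ \inf\Bigl\{\,\inf\bigl\{\tfrac12|h|_\CH^2:\Pi^{h_\delta}=\PPi\bigr\}\ :\ \PPi\in\MMm^{(3)},\ \Phi(\PPi)=(\Pi,F)\Bigr\}\;.
\end{equ}
Using the uniqueness part of Theorem~\ref{theo:extension} to collapse the nested infima, this equals $\inf\bigl\{\tfrac12|h|_\CH^2:(\Pi^{h_\delta},F^{h_\delta})=(\Pi,F)\bigr\}=\II_{\mathrm{Model\, }\delta}\bigl((\Pi,F)\bigr)$, as claimed.

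Once Theorems~\ref{thm:wienerLDP} and~\ref{theo:extension} are in hand this is essentially bookkeeping, and I do not expect a genuine obstacle. The three points that need a little care are: the reduction of Theorem~\ref{thm:wienerLDP} to fixed $\delta$ via a constant family; the observation that the $\EE$-valued rate function is supported on the closed set $\MMm^{(3)}$, so that the LDP legitimately restricts to that subspace; and the identification of the two nested infima with the single infimum defining $\II_{\mathrm{Model\, }\delta}$. The substantive content has been absorbed into the Wiener-chaos structure of the models (via \cite[Sec.~10]{Regularity}) and into the continuity of the extension map.
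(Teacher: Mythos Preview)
Your proposal is correct and follows essentially the same route as the paper's proof: apply Theorem~\ref{thm:wienerLDP} at fixed $\delta$ to obtain an LDP on $\EE$, restrict it to the closed subset $\MMm^{(3)}$, and then push forward via the continuous extension map of Theorem~\ref{theo:extension} using the contraction principle, collapsing the nested infima at the end. Your write-up is in fact slightly more explicit than the paper's in two places (the constant-family device for invoking Theorem~\ref{thm:wienerLDP} at fixed $\delta$, and the justification for restricting the LDP to $\MMm^{(3)}$), but the strategy is identical.
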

\begin{proof}
Theorem~\ref{thm:wienerLDP} applied to $\{ \FF^{(\eps)}_\delta; \eps >0 \}$ for fixed value of $\delta$ yields a large deviation principle for the  random variable $\bigoplus_{\tau \in \CW} \Pi^{\sqrt{\eps} \xi_\delta} \tau \in \EE$. For $\sz = \bigoplus_{\tau \in \CW} s_\tau$ the rate function is given by 
\begin{equ}
\widetilde{\II_{ \delta}}(\sz) = \inf \Big\{ \tfrac12 | h |_{\CH}^2 \colon \Pi^{h_\delta}\tau = s_\tau \, \text{for } \tau \in \CW^{(3)}_-  \Big\}\;.
\end{equ}
By construction this random variable takes values in $\MM_-^{(3)}$ almost surely. As $\MM_-^{(3)}$ is a closed subset of $\EE$ it follows automatically that the large deviation principle also holds on $\MM_-^{(3)}$ with respect to the relative topology.

According to Theorem~\ref{theo:extension} the unique extension to an admissible model is continuous from $\MM_-^{(d)}$ to $\MM^{(d)}$. Hence  the contraction principle, Lemma~\ref{le:contraction1},  implies a large deviation principle for the full model. The rate function is given by 
\begin{equs}
\II_{\mathrm{Model\, }\delta} \big( (\Pi ,F) \big)  &= \inf \{ \widetilde{\II_\delta}  (\sz) \colon \, \sz \in \CM^{(3)}_- \, \text{ extension of $\sz$ is $(\Pi,F)$}     \} \\
&= \inf  \Big\{ \tfrac12 | h |_{\CH}^2 \colon  \, \Pi^{h_\delta}\tau = s_\tau \, \text{for } \tau \in \CW^{(3)}_-   \, \text{ and}\\
& \qquad \qquad \qquad \text{ extension of $\sz = \bigoplus s_\tau$ is $(\Pi, F)$} \Big\}\\
&= \inf \Big\{ \tfrac12 | h |_{\CH}^2 \colon \, (\Pi^{h_\delta},F^{h_\delta}) = (\Pi,F) \Big\} \;,
\end{equs}
which is precisely our claim.
\end{proof}

 Combining this result with the contraction principle, Lemma~\ref{le:contraction1} and Theorem~\ref{theo:gen} we recover the  Freidlin-Wentzell type bounds for the stochastic PDE \eqref{e:AC} driven by a noise term $\sqrt{\eps} \xi_\delta$ for fixed correlation length $\delta$ as the noise strength $\eps$ goes to zero. Of course, such bounds can also be derived without any reference to the theory of regularity structures.

 Our approach produces meaningful new results only when considering the limit $\delta \to 0$.  Indeed, if we could show that the $\FF_\delta$ converges to a limit $\FF$ in the sense of \eqref{e:convergenceWiener} we could apply Theorem~\ref{thm:wienerLDP} to obtain a large deviation principle for any diagonal sequence $\delta, \eps \to 0$. Unfortunately, as already explained above in Section~\eqref{sec:renorm} this is not true.  However,  if we replace the models $\Pi^{\xi_\delta}$ by the \emph{renormalised} models  $\hat{\Pi}^{\xi_\delta}$ as defined above in \eqref{e:3dWick} we  obtain uniform bounds and can pass to the limit $\delta \to 0$. 

More precisely, for any $\eps >0$ and  $\delta>0$, let $(\hat{\Pi}^{\sqrt{\eps}\xi_\delta}, \hat{F}^{\sqrt{\eps} \xi_\delta} ) \in \MM^{(3)}$ be the model defined recursively via \eref{e:admissible3}, \eqref{e:Wick}, and 
\eqref{e:3dWick} from $\hat{\Pi}^{(\eps)}_{y} \Xi = \sqrt{\eps}\xi_\delta$, and then 
extended to all of $\MM^{(3)}$ by Theorem~\ref{theo:extension}.
For $\delta=0$, we write $(\hat{\Pi}^{\sqrt{\eps}\xi}, \hat{F}^{\sqrt{\eps} \xi} ) = (\hat{\Pi}^{\sqrt{\eps}\xi_0}, \hat{F}^{\sqrt{\eps} \xi_0} ) \in \MM^{(3)}$ for the limit as $\delta \to 0$ of these models,
which exists by Theorem~\ref{theo:mainregularity}.
As above, for $h \in \CH$, we write $(\Pi^{h},F^{h})$ for the canonical model constructed from $h$.  
We then have the following result.

 \begin{theorem}\label{thm:main0}
Let $\eps \mapsto \delta(\eps) \geq 0$ be any function satisfying $\lim_{\eps \to 0}   \delta(\eps) =0$. Then the distributions of  $\{(\hat{\Pi}^{\sqrt{\eps}\xi_{\delta(\eps)}}, \hat{F}^{\sqrt{\eps} \xi_{\delta(\eps)}} ), \eps > 0 \}$ under $\mu$ satisfy a large deviation principle on $\MM^{(3)}$ with rate $\eps$ and rate function
\begin{equ}
\II_{\mathrm{Model}}  \big( (\Pi ,F) \big) \eqdef \inf \Big\{ \tfrac12 | h |_{\CH}^2 \colon \, (\Pi^{h},F^{h}) = (\Pi,F) \Big\} \;.
\end{equ}
 \end{theorem}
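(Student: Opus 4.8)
The plan is to run the argument behind Theorem~\ref{thm:trivialLDP} one level higher: instead of applying the abstract Wiener-chaos principle to a family indexed by a \emph{fixed} correlation length, I would apply it to the \emph{renormalised} minimal models -- which, unlike the bare ones, do converge as $\delta\to0$ -- and then transport the resulting principle to $\MM^{(3)}$ through the extension map of Theorem~\ref{theo:extension} and the contraction principle. I only treat $d=3$; the case $d=2$ is identical but easier. First I would check that, for every $\delta\ge0$, the object $\hat\FF_\delta\eqdef\bigoplus_{\tau\in\CW_-^{(3)}}\hat\FFF_{\delta;\tau}$, with $\hat\FFF_{\delta;\tau}(\xi)\eqdef\hat\Pi^{\xi_\delta} \tau$, fits the framework of Section~\ref{s:WienerLDP}: by the discussion preceding Theorem~\ref{thm:trivialLDP}, together with the fact that $C^{(1)}_\delta$ and $C^{(2)}_\delta$ in \eqref{e:Wick}--\eqref{e:3dWick} are deterministic and only affect chaos components of order strictly below $K_\tau$, each $\hat\FFF_{\delta;\tau}$ still lies in the inhomogeneous $E_\tau$-valued Wiener chaos of order $K_\tau$, and its top-order component coincides with that of the bare model. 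Since the renormalised recursion is homogeneous in $\xi$ of degree $K_\tau$ once the constants carry the $\eps$-dependent normalisation dictated by \eqref{e:ACapprox} (i.e.\ $C^{(1)}_\delta$ rescaled by $\eps$ and $C^{(2)}_\delta$ by $\eps^2$), one has $\hat\Pi^{\sqrt\eps\xi_\delta}\tau=\eps^{K_\tau/2}\hat\Pi^{\xi_\delta}\tau$, which is exactly the rescaling \eqref{e:scaling}.

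The genuinely new ingredient is the convergence $\hat\FF_\delta\to\hat\FF$ as $\delta\to0$ in the sense of \eqref{e:convergenceWiener}, where $\hat\FF$ is the limiting renormalised model of Theorem~\ref{theo:mainregularity}. I would read this off \cite[Sec.~10]{Regularity}: each $\hat\Pi^{\xi_\delta}\tau$ is represented, as in \eqref{e:itint}, by iterated integrals against kernels $\CW_{\delta;\tau,k}$ obtained by contracting the tree $\tau$ (with $C^{(1)}_\delta,C^{(2)}_\delta$ tuned precisely so that the a priori divergent sub-kernels are replaced by convergent ones); these kernels converge in $L^2$ to limiting kernels $\CW_{0;\tau,k}$ built from the untruncated heat kernel $K$, and the isometry \eqref{e:L2iso} converts this into $L^2(\mu,E_\tau)$-convergence, the uniform moment bounds of \cite[Thm.~10.7]{Regularity} supplying the required uniform integrability. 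Granting \eqref{e:convergenceWiener}, Theorem~\ref{thm:wienerLDP} applies and the laws of $\hat\FF^{(\eps)}_{\delta(\eps)}$ satisfy a large deviation principle on $\EE$ with rate function $\II(\sz)=\inf\{\tfrac12|h|_\CH^2:\hat\FF_{\hom}(h)=\sz\}$.

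It then remains to identify $\hat\FF_{\hom}$. By definition this involves only the top-order chaos components of the \emph{limit} $\hat\FF$, which by the previous paragraph coincide with the top-order components of the bare limiting model and are represented by the kernels $\CW_{0;\tau,K_\tau}$ carrying the untruncated $K$ at the leaves; evaluating as in \eqref{e:Whomogen} but at $\delta=0$ -- so that $\CW_\delta$ and $h_\delta$ are replaced by $\CW_0$ and $h$ -- gives $(\hat\FFF_{\tau,K_\tau})_{\hom}(h)=\Pi^h\tau$, i.e.\ $\hat\FF_{\hom}(h)$ is the minimal \emph{canonical} model built from $h$. Both the mollification and the renormalisation constants have thus dropped out, the former because $\delta(\eps)\to0$ and the latter because they live in strictly lower chaos. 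Consequently the principle holds on the closed subset $\MMm^{(3)}\subset\EE$ with rate function $\inf\{\tfrac12|h|_\CH^2:\Pi^h\tau=s_\tau\text{ for }\tau\in\CW_-^{(3)}\}$, and composing with the locally Lipschitz extension of Theorem~\ref{theo:extension} (which sends the minimal canonical model to the full canonical model $(\Pi^h,F^h)$) via the contraction principle, Lemma~\ref{le:contraction1}, arguing exactly as in the last display of the proof of Theorem~\ref{thm:trivialLDP} with $h_\delta$ replaced by $h$, I would obtain the asserted large deviation principle on $\MM^{(3)}$ with rate function $\II_{\mathrm{Model}}$.

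The step I expect to be the main obstacle is establishing \eqref{e:convergenceWiener} with full uniformity in $\delta$: one must verify that a single choice of $C^{(1)}_\delta,C^{(2)}_\delta$ simultaneously renormalises all the trees in $\CW_-^{(3)}$ and that the resulting $L^2$-bounds on the kernels $\CW_{\delta;\tau,k}$ are uniform in $\delta$ -- this uniformity is what feeds both the convergence hypothesis of Theorem~\ref{thm:wienerLDP} and, through Lemma~\ref{le:expontialequivalent}, the exponential tightness used inside its proof. The analytic substance of this is already carried out in \cite{Regularity}, so the remaining work is essentially a matter of recasting those estimates in the Wiener-chaos language of Section~\ref{s:WienerLDP}.
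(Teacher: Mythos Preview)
Your proposal is correct and follows essentially the same route as the paper's proof: define $\hat\FF_\delta$ from the renormalised minimal models, invoke the $L^2(\mu,E_\tau)$-convergence proved in \cite[Thms.~10.7 and 10.22]{Regularity} to feed Theorem~\ref{thm:wienerLDP}, identify $\hat\FF_{\hom}(h)$ with the canonical minimal model built from $h$ (since renormalisation only touches chaos components of order $<K_\tau$), and then push the resulting LDP through the extension map of Theorem~\ref{theo:extension} via Lemma~\ref{le:contraction1}, exactly as in the proof of Theorem~\ref{thm:trivialLDP}. Your explicit discussion of the $\eps$-scaling of the renormalisation constants and of where the analytical work from \cite{Regularity} enters is accurate and slightly more detailed than the paper's own argument, but there is no genuine difference in strategy.
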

\begin{proof}
For $\delta>0$, define the $\EE$-valued functions $\hat{\FF}_{\delta}(\xi)$ as  $\hat{\FFF}_{\delta;\tau}(\xi) = \hat{\Pi}^{\xi_\delta}\tau$. It is shown in \cite[Thms~10.7 and 10.22]{Regularity} that for every $\tau \in \CW^{(3)}_-$ these $\hat{\FFF}_{\delta;\tau}(\xi)$ converge to the limit $\hat{\FFF}_{\tau}(\xi)$ in the sense that
\begin{equ}
\lim_{\delta \to 0 }\int   \big\| \hat{\FFF}_{\delta;\tau}(\xi) - \hat{\FFF}_{\tau}(\xi) \big\|_{E_\tau}^2  \mu(d \xi)   = 0\;.
\end{equ}
Furthermore, each $\hat{\FFF}_\tau(\xi)$ is given by iterated stochastic integrals against kernels $\hat{\CW}_{\tau; k}$  as, for example, above in \eqref{e:ItIt4}. For each $\tau \in \CW^{(3)}_-$ the contribution in the highest Wiener chaos  $\CH^{(K_\tau)}(\mu, E_\tau)$ is obtained by simply removing the convolution with $\rho_\delta$ in the definitions of the the integral kernels $K_\delta$. For example, we have
  \begin{equs}
 \hat{\CW}_{ \<32s> ,5} (\bar{y},y, z_1, \ldots, z_5)&\eqdef\int dx  \, K(z_1 - x) \, K(z_2 -x) \, K (z_3 -x)\label{e:Kernel32prime}\\
 & \big( K(x - \bar{y}) - K(x- y) \big) \, K(z_4 -y) \, K(z_5 -y) \;.
 \end{equs}
 This is because the renormalisation procedure described in \eqref{e:Wick} and \eqref{e:3dWick} only affects the contributions in $\CH^{(k)}(\mu, E_\tau)$ for $k < K_\tau$. This implies that for each $\tau \in \CW^{(3)}_-$ the homogeneous part $\big( \hat{\FF}_{\tau} \big)_{\hom}(h)$ is given by $\Pi^{h} \tau$, the contribution at level $\tau$ for the canonical model constructed from $\Pi_z \Xi = h$. Hence, for any function $\delta(\eps) \geq 0$ with $\lim_{\eps \to 0} \delta(\eps) =0$, Theorem~\ref{thm:wienerLDP} yields a large deviation principle on $\EE$ for the sequence of random variables $\hat{\FF}_{\delta(\eps)}^{(\eps)}$.  From this we obtain a large deviation principle for the full renormalised 
 models $(\hat{\Pi}^{\sqrt{\eps}\xi_{\delta(\eps)}} , \hat{F}^{\sqrt{\eps}\xi_{\delta(\eps)}} )$ using Theorem~\ref{theo:extension} as well as the contraction principle, Lemma~\ref{le:contraction1}, in the same way as in the proof of Theorem~\ref{thm:trivialLDP}
\end{proof}

Applying Theorem~\ref{theo:gen} and the contraction principle, Lemma~\ref{le:contraction1}, once more, we immediately obtain our main result. In order to formulate it, we use the notation 
\begin{equ}
\CC_{u_0}([0,T],\CC^\eta) \eqdef \{ u \in \CC ([0,T],\CC^\eta) \colon u(0, \cdot) = u_0 \}\;,
\end{equ}
and $\mathcal{X}_{u_0} \eqdef \CC_{u_0}([0,T],\CC^\eta) \cup \{\infty\}$. As above we endow $\mathcal{X}_{u_0}$ with a metric by postulating that $\{\infty \}$ is at distance $1$ from every point in $\CC_{u_0}([0,T],\CC^\eta)$.

\begin{theorem}\label{thm:main1}
Let $\eta \in (-3/2, 1-{d\over 2} - \kappa)$ as before
and, for any $\eps, \delta >0$ and for any initial datum $u_0 \in \CC^{\eta}$, let $\hat{u}^{(\eps)}_\delta$ be the unique solution of the SPDE
\begin{equ}[e:ACend]
\d_t u = \Delta u + (C + 3 \eps C_\delta^{(1)} - 9 \eps^2 C_\delta^{(2)}) u - u^3 + \sqrt{\eps}\xi_\delta\;
\end{equ}
with initial datum $u_0$. 
Here $C \in \R$ is arbitrary and the constants $C^{(i)}_\delta$ are defined in \eqref{e:CWick} and \eqref{e:CWick7}. For $\eps>0$, let $\hat{u}^{(\eps)}_0$ be the limit as $\delta \to 0$ of these solutions, constructed in theorems~\ref{theo:gen} and \ref{theo:mainregularity}.
(As before we set it equal to $\infty$ if the maximal existence time is less or equal to $T$.)

Let $\eps \mapsto \delta(\eps) \geq 0$ be a function with 
\begin{equ}
\lim_{\eps \to 0 } \delta(\eps)=0\;.
\end{equ}
Then the sequence $\hat{u}^{(\eps)}_{\delta(\eps)}$ satisfies a large deviation principle in $\mathcal{X}_{u_0}$ with rate function $\II(\infty) = +\infty$ and
\begin{equ}[e:LDPfinal]
\II(u) =  \frac12 \int_0^T \!\!\int_{\T^d} \big( \partial_t u -  \Delta u - C u + u^3 \big)^2 \, dx \, dt   \;.
\end{equ}
This is with the understanding that $\II(u) = +\infty$ if $u \notin L^3([0,T] \times \T^3)$ 
or if the distribution appearing 
in the right hand side of \eqref{e:LDPfinal} is not a square integrable function.
\end{theorem}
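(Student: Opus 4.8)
The plan is to obtain Theorem~\ref{thm:main1} from the model-level large deviation principle of Theorem~\ref{thm:main0} by transporting it through the solution map $\CS_A$ of Theorem~\ref{theo:gen} by means of the generalised contraction principle, Lemma~\ref{le:contraction1}, and then identifying the resulting rate function with $\II$.

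Fix $C\in\R$ and $u_0\in\CC^\eta$, and write $\PPi^h\in\MMm^{(3)}$ for the minimal model underlying the canonical model $(\Pi^h,F^h)$. The renormalisation constants in \eqref{e:ACend} are scaled by $\eps$ and $\eps^2$ precisely so that the renormalised lift of $\sqrt\eps\xi_\delta$ built from \eqref{e:Wick}, \eqref{e:3dWick} is the homogeneous $\eps$-rescaling \eqref{e:scaling} of the renormalised lift of $\xi_\delta$; hence Theorem~\ref{theo:mainregularity} gives $\hat u^{(\eps)}_\delta=\CS_A(C,u_0,\hat\PPi^{(\eps)}_\delta)$, where $\hat\PPi^{(\eps)}_\delta\in\MMm^{(3)}$ is the minimal model underlying $(\hat\Pi^{\sqrt\eps\xi_\delta},\hat F^{\sqrt\eps\xi_\delta})$ (for $\delta=0$ this is the definition of $\hat u^{(\eps)}_0$). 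Theorem~\ref{thm:main0}, read off at the level of minimal models, yields a large deviation principle for $\hat\PPi^{(\eps)}_{\delta(\eps)}$ with rate function $\II_{\mathrm{Model}}$, whose effective domain $\{\II_{\mathrm{Model}}<\infty\}$ is exactly $\{\PPi^h:h\in\CH\}$. Since the maps $\CS_A(C,u_0,\cdot)$ do not depend on $\eps$, hypothesis (2) of Lemma~\ref{le:contraction1} is automatic, and it remains to check, by Theorem~\ref{theo:gen}, that $\CS_A(C,u_0,\cdot)$ is finite, hence locally Lipschitz, on a neighbourhood of $\{\PPi^h:h\in\CH\}$. This rests on two facts. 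First, $\PPi^h$ genuinely lies in $\MMm^{(3)}$ for every $h\in\CH=L^2$: then $K\star h$ is an honest function lying parabolically in $\CC^{1-d/2-\kappa}$, all trees in $\CW_-^{(3)}$ are built from genuine products of such functions and further convolutions with $K$, and the bounds \eqref{e:boundsM02} follow from elementary $L^p$ estimates, the re-centring in \eqref{e:admissible3} supplying the extra decay needed for the trees of non-negative homogeneity. Second, for every $h\in L^2$ the deterministic equation $\d_t u=\Delta u+Cu-u^3+h$ with $u(0,\cdot)=u_0$ is globally well posed, which follows from the a priori bound obtained by testing against $u$ and using the dissipativity of the $-u^3$ term; this rules out blow-up and makes $\CS_A(C,u_0,\PPi^h)$ finite.

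Granting these, Lemma~\ref{le:contraction1} gives a large deviation principle for $\hat u^{(\eps)}_{\delta(\eps)}=\CS_A(C,u_0,\hat\PPi^{(\eps)}_{\delta(\eps)})$ in $\CX$, hence --- by concentration on the closed subset of paths starting at $u_0$ --- in $\mathcal{X}_{u_0}$, with rate function $\widetilde\II(w)=\inf\{\tfrac12|h|_\CH^2:\CS_A(C,u_0,\PPi^h)=w\}$. The identification $\widetilde\II=\II$ hinges on the fact that $\CS_A(C,u_0,\PPi^h)$ is the classical solution of $\d_t u=\Delta u+Cu-u^3+h$ on $[0,T]$ with initial datum $u_0$: for smooth $h$ this is part of Theorem~\ref{theo:gen}, and the general case follows by mollifying, $h_n=h\star\rho_{1/n}\to h$ in $L^2$, using the continuity of $\CS_A$ at $\PPi^h$ and the $L^2$-stability of the deterministic equation. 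Consequently $\CS_A(C,u_0,\PPi^h)\neq\infty$ for all $h$, so $\widetilde\II(\infty)=+\infty=\II(\infty)$; and for $w\in\CC_{u_0}([0,T],\CC^\eta)$ the constraint $\CS_A(C,u_0,\PPi^h)=w$ forces $h=\d_t w-\Delta w-Cw+w^3$ on $[0,T]$ while permitting $h=0$ elsewhere on $[-2,T+2]\times\T^d$, so that $\widetilde\II(w)=\tfrac12\int_0^T\!\!\int_{\T^d}(\d_t w-\Delta w-Cw+w^3)^2$ when $w\in L^3$ and this distribution is square-integrable, and $\widetilde\II(w)=+\infty$ otherwise --- exactly \eqref{e:LDPfinal}. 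Since Theorem~\ref{theo:gen} requires $\eta>-\tfrac23$ whereas the statement allows $\eta>-\tfrac32$, one finally transports the principle from $\CC([0,T],\CC^{\eta_0})$ for some admissible $\eta_0$ to the coarser $\CC([0,T],\CC^\eta)$ via the continuous inclusion $\CC^{\eta_0}\hookrightarrow\CC^\eta$ and one more application of the contraction principle, the rate function and the compactness of its sublevel sets being preserved.

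The main obstacle I anticipate is the rate-function identification, and within it the two facts above: that the effective domain of $\II_{\mathrm{Model}}$ is precisely the set of canonical lifts of $\CH$-paths, and that $\CS_A(C,u_0,\PPi^h)$ coincides with the classical solution of the deterministic Allen--Cahn equation forced by $h$. Both require controlling the canonical lift on the non-smooth Cameron--Martin space $L^2$ --- i.e.\ verifying $\PPi^h\in\MMm^{(3)}$ and matching the abstract and classical solution concepts in the limit of mollified forcings --- together with global well-posedness of the limiting deterministic equation, which is ultimately what makes the value $\II(\infty)=+\infty$ correct.
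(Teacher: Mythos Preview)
Your proof follows the same strategy as the paper's: the large deviation principle for the renormalised models from Theorem~\ref{thm:main0} is pushed through the solution map $\CS_A$ of Theorem~\ref{theo:gen} via the contraction principle Lemma~\ref{le:contraction1}, using global well-posedness of the deterministic Allen--Cahn equation with $L^2$ forcing to ensure that $\CS_A$ is finite (hence locally Lipschitz) on a neighbourhood of the effective domain $\{\PPi^h:h\in\CH\}$, and the rate function is then identified by recovering $h=\partial_t u-\Delta u-Cu+u^3$ from the solution. You are in fact more explicit than the paper on two points it treats lightly: that the canonical lift of an arbitrary $h\in L^2$ really defines an element of $\MMm^{(3)}$, and that $\CS_A(C,u_0,\PPi^h)$ agrees with the classical solution for non-smooth $h$ (you argue by mollification, the paper simply invokes Theorem~\ref{theo:mainregularity}).

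One small caveat on your last paragraph: the discrepancy you spotted between the $\eta>-\tfrac32$ in the statement and the $\eta>-\tfrac23$ required by Theorem~\ref{theo:gen} is not addressed in the paper's own proof either, and is most plausibly a typo in the statement. Your proposed fix via the embedding $\CC^{\eta_0}\hookrightarrow\CC^\eta$ only transports the large deviation principle to a coarser target space; it does not allow you to take initial data $u_0\in\CC^\eta\setminus\CC^{\eta_0}$, since $\CS_A$ still needs $u_0\in\CC^{\eta_0}$ with $\eta_0>-\tfrac23$ to be defined. So if the wider range were genuinely intended, an additional argument (e.g.\ a short-time smoothing step for the initial datum) would be required.
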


\begin{remark}
The fact that the prefactor for $C_\delta^{(1,2)}$ in \eqref{e:ACend} is given
by $\eps $ and $\eps^2$ respectively follows from \eqref{e:scaling} since
$K_{\<2s>} = 2$ and $K_{\<22s>} = 4$. This is also consistent with
the definitions (10.35) and (10.41) in \cite{Regularity}.
\end{remark}

\begin{proof}
According to Theorem \ref{theo:mainregularity} the solution of \eqref{e:ACend} can be obtained by first applying the solution map $\CS_A$  to the renormalised model $(\hat{\Pi}^{\sqrt{\eps} \xi_{\delta(\eps)}}, \hat{F}^{\sqrt{\eps} \xi_{\delta(\eps)}})$.

According to Theorem~\ref{thm:main0} the renormalised models 
$(\hat{\Pi}^{\sqrt{\eps} \xi_{\delta(\eps)}}, \hat{F}^{\sqrt{\eps} \xi_{\delta(\eps)}})$ satisfy a 
large deviation principle on $\MM^{(3)}$. Denote by $\MM^{(3)}_\CH \subset \MM^{(3)}$ the set of
models obtained by the canonical lift of some element $h \in \CH$. Since the solutions to
the Allen-Cahn equation driven by a noise term in $\CH$ are global, the map $ \CS_A$ does not
take the value $\infty$ on $\MM^{(3)}_\CH$.
Furthermore, as a consequence of 
Theorem~\ref{theo:gen}, the solution operator $\CS_A$ (for fixed $u_0$ and $C$) 
is continuous  from an open neighbourhood of $\MM^{(3)}_\CH$ in $\MM^{(3)}$ into $\CC_{u_0}([0,T],\CC^\eta)$.  
Since $\MM^{(3)}_\CH = \{\II_{\mathrm{Model}} < +\infty\}$, 
the contraction principle, Lemma~\ref{le:contraction1}, immediately implies that the 
$\hat{u}_{\delta(\eps)}^{(\eps)}$ satisfy a large deviation principle on  $\mathcal{X}_{u_0} =\CC_{u_0}([0,T],\CC^\eta)\cup \{\infty\}$. 

The rate function is given by 
\begin{equs}
\II(u) &= \inf\{  \II_{\mathrm{Model}}(\Pi, F) \colon  \CS_A (\Pi,F) =u \} \\
&= \inf \big\{  \tfrac12 | h |_{\CH}^2 \colon \,  \CS_A (\Pi^{h},F^{h}) = u  \big\}\;.
\end{equs}
According to Theorem~\ref{theo:mainregularity}, for $h \in \CH$ the function $ \CS_A (\Pi^{h},F^{h})$ is simply the classical solution to 
\begin{equ}
\d_t u = \Delta u + Cu - u^3 + h\;.
\end{equ}
Standard parabolic regularity theory implies that for $h \in L^2([0,T]\times \T^3)$ (and for such an irregular choice of initial condition) $u$ will at least attain values in $L^3([0,T] \times \T^3))$. Conversely, if $u$ can be obtained in this way from $h$
we can simply recover $h$ by setting
\begin{equ}
h = \d_t u - ( \Delta u + Cu - u^3)\;.
\end{equ}
This concludes the argument.
\end{proof}

\begin{remark} 
Actually, we do not expect the renormalised solutions to explode either for  $d=2$ or $d=3$. Our argument does not imply this, but we can conclude that the probability of finite time explosion decays faster than $e^{-c \eps^{-1}}$ for any $c \in (0,\infty)$.
\end{remark}

In a very similar way, Theorem~\ref{thm:main0} implies the following large deviation principle for solutions of the SPDE without renormalisation:
\begin{theorem}\label{thm:main2}
Let $\eta \in (-3/2, 1-{d\over 2} - \kappa)$ as before. For any $\eps, \delta >0$ and for any initial datum $u^0 \in \CC^{\eta}$ let $u^{(\eps)}_\delta$ be the unique (global) solution of the SPDE
\begin{equ}[e:ACend2]
\d_t u = \Delta u + C u - u^3 + \sqrt{\eps} \xi_\delta\;,
\end{equ}
with initial datum $u_{0}$. 

Let $\eps \mapsto \delta(\eps) > 0$ be a function with $\lim_{\eps \to 0} \delta(\eps)=0$ and
\begin{equ}[e:lambdaassumption]
\lim_{\eps \to 0 } \eps \delta^{-1} = \lambda^2 \in [0,\infty).
\end{equ}
Then the $u^{(\eps)}_{\delta(\eps)}$ satisfy a large deviation principle in  $\CC([0,T],\CC^\eta)$ with rate function \begin{equ}
\II(u) =  \frac12 \int_0^T \!\!\int_{\T^d} \big( \partial_t u -  \Delta u + C_\lambda u + u^3 \big)^2 \, dx \, dt   \;.
\end{equ}
where $C_\lambda =  C- 3 \lambda^2  \int_{\R^4} \bigl(P\star \rho\bigr)(t,x)^2\,dt\,dx$. 
\end{theorem}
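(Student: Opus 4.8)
The plan is to realise $u^{(\eps)}_\delta$ as the image, under the solution map $\CS_A$, of the \emph{renormalised} model of Theorem~\ref{thm:main0}, but with the bare constant shifted so as to absorb the counterterms, and then to conclude exactly as in the proof of Theorem~\ref{thm:main1} via the generalised contraction principle, Lemma~\ref{le:contraction1}. The starting observation is that \eqref{e:ACend2} is the same equation as \eqref{e:ACend} once the constant $C$ in the latter is replaced by $C^{(\eps)}_\delta \eqdef C - 3\eps C^{(1)}_\delta + 9\eps^2 C^{(2)}_\delta$, since then $C^{(\eps)}_\delta + 3\eps C^{(1)}_\delta - 9\eps^2 C^{(2)}_\delta = C$. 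Consequently, by Theorem~\ref{theo:mainregularity} together with the scaling \eqref{e:scaling} (which produces exactly the prefactors $\eps$ and $\eps^2$ in front of $C^{(1)}_\delta$, $C^{(2)}_\delta$), one has $u^{(\eps)}_\delta = \CS_A\bigl(C^{(\eps)}_\delta, u_0, \hat\Pi^{\sqrt\eps\xi_\delta}\bigr)$, where $\hat\Pi^{\sqrt\eps\xi_\delta}$ is the renormalised model appearing in Theorem~\ref{thm:main0}; for smooth $\xi_\delta$ this is a global classical solution (by dissipativity of $-u^3$), so the cemetery state is never reached.

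Next I would pass to the diagonal $\delta = \delta(\eps)$ and compute the limit of the shifted constant. In dimension $d=3$, \eqref{e:CWick} gives $\eps C^{(1)}_\delta = (\eps\delta^{-1})\int_{\R^4}(P\star\rho)^2\,dt\,dx$, which converges to $\lambda^2\int_{\R^4}(P\star\rho)^2\,dt\,dx$ by the hypothesis \eqref{e:lambdaassumption}, while \eqref{e:CWick7} gives $\eps^2 C^{(2)}_\delta = \eps^2(c\log\delta^{-1}+\bar c_\rho)$ with $\eps^2\log\delta^{-1} = (\eps\delta^{-1})\,\eps\,(\delta\log\delta^{-1}) \to \lambda^2\cdot 0\cdot 0 = 0$. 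Hence $C^{(\eps)}_{\delta(\eps)} \to C_\lambda$ with $C_\lambda = C - 3\lambda^2\int_{\R^4}(P\star\rho)^2\,dt\,dx$ as in the statement. (The case $d=2$ is entirely analogous, using $C^{(1)}_\delta = \tfrac1{4\pi}\log\delta^{-1}+c_\rho$, $C^{(2)}_\delta = 0$, and $\eps\log\delta^{-1}\to\lambda^2$.)

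With this in hand the large deviation principle follows from Lemma~\ref{le:contraction1} applied to the maps $\FFF_\eps \eqdef \CS_A(C^{(\eps)}_{\delta(\eps)}, u_0, \cdot)$ and $\FFF_0 \eqdef \CS_A(C_\lambda, u_0, \cdot)$ on $\MM^{(3)}$, with input LDP the one for the renormalised models $\hat\Pi^{\sqrt\eps\xi_{\delta(\eps)}}$ from Theorem~\ref{thm:main0}, whose rate function $\II_{\mathrm{Model}}$ has compact sublevel sets contained in the set $\MM^{(3)}_\CH$ of canonical lifts of elements of $\CH$. On $\MM^{(3)}_\CH$ the map $\CS_A$ is finite for \emph{every} value of the constant, since equations driven by a noise in $\CH$ are globally well posed; combined with the joint local Lipschitz continuity of $\CS_A$ in $(C,u_0,\Pi,F)$ from Theorem~\ref{theo:gen}, this shows that each $\FFF_\eps$ is continuous on a neighbourhood of $\MM^{(3)}_\CH = \{\II_{\mathrm{Model}}<\infty\}$. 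The step I expect to require the most care is hypothesis~(2) of Lemma~\ref{le:contraction1}, namely uniform convergence $\FFF_\eps \to \FFF_0$ on a neighbourhood of each sublevel set $\{\II_{\mathrm{Model}} \le c\}$: one covers this compact set by finitely many of the neighbourhoods on which $\CS_A$ is jointly Lipschitz, extracts a neighbourhood $\CO_c$ and a uniform constant $L_c$ on which $\CS_A(C,u_0,\Pi,F)$ is finite for all $C$ near $C_\lambda$, and then bounds $\sup_{(\Pi,F)\in\CO_c} \dist\bigl(\FFF_\eps(\Pi,F),\FFF_0(\Pi,F)\bigr) \le L_c\,\bigl|C^{(\eps)}_{\delta(\eps)} - C_\lambda\bigr| \to 0$. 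Lemma~\ref{le:contraction1} then yields the LDP for $u^{(\eps)}_{\delta(\eps)} = \FFF_\eps\bigl(\hat\Pi^{\sqrt\eps\xi_{\delta(\eps)}}\bigr)$ with rate function $u \mapsto \inf\{\II_{\mathrm{Model}}(\Pi,F) : \CS_A(C_\lambda,u_0,(\Pi,F)) = u\}$.

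Finally the rate function is identified exactly as in the proof of Theorem~\ref{thm:main1}: using $\II_{\mathrm{Model}}(\Pi,F) = \inf\{\tfrac12|h|_\CH^2 : (\Pi^h,F^h)=(\Pi,F)\}$ and the fact that $\CS_A(C_\lambda,u_0,(\Pi^h,F^h))$ is the classical solution of $\partial_t u = \Delta u + C_\lambda u - u^3 + h$ (Theorem~\ref{theo:mainregularity}), one inverts this relation as $h = \partial_t u - \Delta u - C_\lambda u + u^3$ and reads off $\II$, with the same conventions for when $\II(u)=+\infty$ as in Theorem~\ref{thm:main1} (standard parabolic regularity ensuring that $\II(u)<\infty$ forces $u\in L^3([0,T]\times\T^3)$). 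Since the $u^{(\eps)}_{\delta(\eps)}$ are global and satisfy $u(0,\cdot)=u_0$, the principle holds on $\CC([0,T],\CC^\eta)$ as stated; in particular, taking $\lambda=0$ gives $C_\lambda=C$ and recovers Theorem~\ref{thm:main1}.
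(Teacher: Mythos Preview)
Your proof is correct and follows essentially the same route as the paper's: rewrite \eqref{e:ACend2} as the renormalised equation \eqref{e:ACend} with the shifted constant $\widetilde C(\eps)=C-3\eps C_\delta^{(1)}+9\eps^2 C_\delta^{(2)}$, observe that $\widetilde C(\eps)\to C_\lambda$ under \eqref{e:lambdaassumption}, and then apply the contraction principle as in Theorem~\ref{thm:main1} using the joint local Lipschitz continuity of $\CS_A$ in the constant. Your write-up actually makes explicit two points the paper leaves implicit, namely the factorisation $\eps^2\log\delta^{-1}=(\eps\delta^{-1})\cdot\eps\cdot(\delta\log\delta^{-1})\to 0$ and the finite-cover argument establishing hypothesis~(2) of Lemma~\ref{le:contraction1} uniformly on a neighbourhood of each sublevel set.
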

\begin{remark}
Recall that $P$ denotes the heat kernel on the $3$-dimensional torus without any truncation. (See also Remark~\ref{rem:rem214}.)
\end{remark}
\begin{proof}
We start by rewriting \eqref{e:ACend2} as
\begin{equ}
\d_t u = \Delta u + \widetilde{C}(\eps) u - \big(u^3 - \big( 3 \eps C_\delta^{(1)} - 9 \eps^2 C_\delta^{(2)}\big) u \big) + \sqrt{\eps} \xi_\delta
\end{equ}
where $\widetilde{C}(\eps) = C - 3 \eps C_\delta^{(1)} + 9 \eps^2 C_\delta^{(2)}$. According to Theorem~\ref{theo:mainregularity} the solution to this equation can be obtained by applying the solution operator $\CS_A$ corresponding to the choice $C =\widetilde{C}(\eps)$  to the renormalised model $(\hat{\Pi}^{\sqrt{\eps} \xi_\delta(\eps)}, \hat{F}^{\sqrt{\eps} \xi_\delta(\eps)})$. 

According to the assumption~\eqref{e:lambdaassumption} and to the definition \eqref{e:CWick} and \eqref{e:CWick7} of the constants $C_{\delta}^{(1)}$ and $C_\delta^{(2)}$ the  $\widetilde{C}(\eps)$ converge to $C_\lambda$. Now the argument proceeds like the proof of Theorem~\ref{thm:main1}, noting only that the solution map is locally uniformly continuous in the choice of $C$ when applying the contraction principle, Lemma~\ref{le:contraction1}.
\end{proof}

\begin{remark}
The analogous result for $d=2$ can be obtained in a similar way. There, according to \eqref{e:2DWicklog} we get  for the value of the modified constant $C_\lambda = C- \lambda^2   {3\over 4\pi}$ under the assumption that $ \lim_{\eps \to 0} \eps \log \delta(\eps)^{-1} = \lambda^2 \in [0,\infty)$. In particular, the value of $C_\lambda$ does not depend on the choice of mollifying kernel $\rho$.
\end{remark}

\begin{remark}
For $d=3$ we could also consider schemes of the form 
\begin{equ}
\d_t u = \Delta u + \widetilde{C}(\eps) u - \big(u^3 - 3 \eps C_\delta^{(1)}  u \big) + \sqrt{\eps} \xi_\delta,
\end{equ}
i.e. schemes that ignore the logarithmic sub divergence. In this case we recover a large deviation result with rate function depending on $\lim_{\eps \to 0}  \eps^2 \log \delta(\eps)^{-1} = \lambda^2 \in [0, \infty)$.
\end{remark}

\begin{remark}
Even though our statement does not make reference to regularity structures, the proof uses heavily the fact that there is a non-trivial renormalised limit as $\delta \to 0$  for fixed $\eps>0$. In particular, we do not know if a similar statement (but with stronger assumptions on how $\delta(\eps) \to 0$) holds true in four spatial dimensions where no renormalised solutions are available. 
\end{remark}

\bibliographystyle{./Martin}
\bibliography{./LDP}
\end{document}